\definecolor{darkblue}{rgb}{0.0,0.0,0.3}
\theoremstyle{plain}
\newtheorem{theorem}{Theorem}[section]
\newtheorem*{theorem*}{Theorem}
\newtheorem{lemma}[theorem]{Lemma}
\newtheorem{proposition}[theorem]{Proposition}
\newtheorem*{proposition*}{Proposition}
\newtheorem*{corollary*}{Corollary}
\theoremstyle{definition}
\newtheorem{remark}[theorem]{Remark}
\numberwithin{equation}{section}
\renewcommand{\Im}{\operatorname{Im}}
\renewcommand{\Re}{\operatorname{Re}}
\DeclareMathOperator{\GL}{GL}
\DeclareMathOperator{\sgn}{sgn}
\DeclareMathOperator*{\Res}{Res}
\DeclareMathOperator{\Sym}{Sym}
\title[Arithmetic Progressions of Squares]{Arithmetic Progressions of Squares
and Multiple Dirichlet Series}
\author[Hulse]{Thomas A. Hulse}
\author[Kuan]{Chan Ieong Kuan}
\author[Lowry-Duda]{David Lowry-Duda}
\author[Walker]{Alexander Walker}
\begin{document}

\maketitle

\begin{abstract}
We study a Dirichlet series in two variables which counts primitive three-term arithmetic
progressions of squares. We show that this multiple Dirichlet series has
meromorphic continuation to $\mathbb{C}^2$ and use Tauberian methods to obtain
counts for arithmetic progressions of squares and rational points on
$x^2+y^2=2$.
\end{abstract}

\section{Introduction}

In this paper, we produce estimates for the number of primitive three-term
arithmetic progressions of integer squares, $\{a^2, b^2, c^2\}$ with $c^2-b^2 =
b^2 -a^2$, whose terms are constrained to lie in certain regions.
As no nontrivial arithmetic progression of integer squares has more than three terms
--- stated by Fermat and proved by Euler (among others) ---
we refer to three-term arithmetic
progressions more succinctly as just arithmetic progressions, or APs.
(See~\cite[Vol II, Ch. XIV]{dickson2013history} for a description of the early
history of this problem).

To study primitive APs of squares, we study the multiple Dirichlet series
\begin{equation}\label{eq:Dsw_definition}
  \mathcal{D}(s, w)
  :=
  \sum_{\substack{m, h \geq 1 \\ (m,h)=1}}
  \frac{r_1(h)r_1(m)r_1(2m - h)}{m^s h^w},
\end{equation}
where $r_\ell(n)$ denotes the number of ways to represent $n$ as a sum of
$\ell$ squares. Thus $r_1(\cdot)$ is effectively a square
indicator function, and the numerator of this Dirichlet series
identifies whether $\{h, m, 2m - h\}$ is an AP of squares.

Our principal result is Theorem~\ref{thm:double_ds_decomposition}, which
states that $\mathcal{D}(s, w)$ has meromorphic continuation to $\mathbb{C}^2$ by means
of spectral expansion. We then exploit this meromorphic continuation to obtain
a variety of asymptotic results for the distribution of primitive APs.

Shifted convolutions of pairs of coefficients of modular forms frequently appear in analytic
number theory, and there exist several methods capable of handling them. In
contrast, triple shifted convolutions are typically poorly understood and have
fewer general techniques for analysis. Most other analyses follow the ideas and
methods of Blomer from~\cite{blomer2017triple}, which studies triple
convolutions involving divisor functions. As Blomer notes, it is possible to use
the circle method to study triple convolutions of coefficients of holomorphic
\emph{cusp forms}, but extending these techniques to other non-cuspidal modular
forms seems difficult.

In~\cite{HKLDWtriplecusp}, the authors produce a meromorphic continuation for a
triple shifted convolution Dirichlet series formed from holomorphic cusp forms
using spectral techniques. In this paper, we extend this analysis to
classical theta functions in order to study $\mathcal{D}(s, w)$. It is possible to extend
the techniques in this paper to other non-cuspidal holomorphic forms; the primary
difficulty in generalization lies in understanding growth of terms in the spectral
decomposition.

\section*{Outline of Paper and Results}

The paper ~\cite{HKLDWtriplecusp} concerns triple shifted convolutions of the form
\begin{equation} \label{eq:cusp_form_series}
  \sum_{m, h \geq 1}
  \frac{a(h) b(m) c(2m - h)}{m^s h^w},
\end{equation}
where the coefficients $a(\cdot)$, $b(\cdot)$, and $c(\cdot)$ are coefficients of
holomorphic cusp forms of full integral weight.  At first glance, the challenge in adapting methods from the cuspform case~\eqref{eq:cusp_form_series} to $\mathcal{D}(s,w)$ appears principally technical. However, closer inspection reveals that the spectral behavior of $\mathcal{D}(s,w)$ is distinguished in a way that requires significantly more specificity and care.

We begin in Section~\ref{sec:connections} with an overview of some classical counting
problems which can be studied using $\mathcal{D}(s,w)$. In particular, we discuss
the asymptotics of primitive APs of squares, as noted above, as well as their
connections to Pythagorean triples, congruent numbers, and rational points on
circles. In particular, since $\{a^2, b^2, c^2 \}$
corresponds to a rational point $(a/b, c/b)$ on the circle $x^2 + y^2 = 2$,
counts for APs of squares relate to counts for rational points on the circle of
radius $\sqrt 2$.

In Section~\ref{sec:singleds}, we study the single Dirichlet series
\begin{equation}
  D_h(s) = \sum_{m \geq 1} \frac{r_1(m)r_1(2m - h)}{m^s}
\end{equation}
for a fixed $h$. We obtain this series as an integral involving
$\Im(z)^{1/2}\theta(2z)\overline{\theta(z)}$,
which must be regularized at the cusps of $\Gamma_0(8)$ for the sake of
convergence. We produce a spectral expansion for this regularized form in
Section~\ref{sec:spectral_expansion}. Once simplified, this expansion involves only a simple
term introduced in regularization and a sum over dihedral Maass forms on
$\Gamma_0(8)$. In Section~\ref{sec:doubleds}, we study the full double
Dirichlet series $\mathcal{D}(s, w)$ and, in Theorem~\ref{thm:double_ds_decomposition},
we deduce its meromorphic continuation from its spectral expansion.

In Sections~\ref{sec:applications}
and~\ref{sec:further_applications},
we apply the meromorphic continuation of $\mathcal{D}(s, w)$ to a variety of
problems on APs of squares.
 For example,  in Section~\ref{ssec:ratl_points}, we prove the following theorem.
\begin{theorem*}[Theorem~\ref{thm:ratl_points}]
  Fix $\delta \in [0,1]$. For any $\epsilon > 0$, the number of primitive APs of squares $\{a^2, b^2, c^2\}$ with $b^{2} \leq X$ and $(a/b)^{2}
  \leq \delta$ is
  \begin{equation}
    \frac{2}{\pi^2} \arcsin(\sqrt{\delta / 2}) X^{\frac{1}{2}}
    + O_\epsilon(X^{\frac{3}{8}+\epsilon}).
  \end{equation}
\end{theorem*}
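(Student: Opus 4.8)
The plan is to express the quantity in Theorem~\ref{thm:ratl_points}, which I will call $N(X,\delta)$, as a two-variable contour integral against $\mathcal{D}(s,w)$ and to extract the asymptotic by shifting the $s$-contour past the polar divisor of $\mathcal{D}$. Writing $h = a^2$, $m = b^2$, and $2m-h = c^2$, the constraints $b^2 \le X$ and $(a/b)^2 \le \delta$ become $m \le X$ and $h \le \delta m$, so that
\begin{equation}
  N(X,\delta) = \tfrac18\sum_{\substack{(m,h)=1 \\ m \le X,\ h \le \delta m}} r_1(h)\,r_1(m)\,r_1(2m-h),
\end{equation}
where the factor $\tfrac18$ records the signs in $r_1$ and the passage from ordered pairs $(m,h)$ to unordered sets $\{a^2,b^2,c^2\}$. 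The feature distinguishing this from a plain two-dimensional count is the \emph{coupled} constraint $h \le \delta m$, which I would install through a Mellin representation of its indicator. Using $\mathbf 1[h \le \delta m] = \frac{1}{2\pi i}\int_{(c)} (\delta m/h)^w\,\frac{dw}{w}$ together with~\eqref{eq:Dsw_definition}, the stray factor $m^w$ merges into the $m$-variable and yields
\begin{equation}
  N(X,\delta) = \frac{1}{8}\,\frac{1}{(2\pi i)^2}\int_{(\sigma)}\int_{(c)} \mathcal{D}(s-w,\,w)\,\frac{\delta^w X^{s}}{w\,s}\,dw\,ds,
\end{equation}
valid for $\sigma, c$ chosen in the region of absolute convergence of $\mathcal{D}$.

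The next step is to extract the main term. The regularization term produced in Theorem~\ref{thm:double_ds_decomposition} endows $\mathcal{D}(s_1,w)$ with a simple polar line along $s_1 + w = \tfrac12$; heuristically this reflects that the rational points on $x^2+y^2=2$ of height at most $B$ grow linearly in $B$, and here $B = X^{1/2}$. Consequently $\mathcal{D}(s-w,w)$ has, for each fixed $w$ on the contour, a simple pole in $s$ at $s = \tfrac12$ with residue $\rho(w) := \Res_{s=1/2}\mathcal{D}(s-w,w)$, the restriction of $\mathcal{D}$ to its polar line. Shifting the $s$-contour to the left of $s=\tfrac12$ picks up this residue and produces
\begin{equation}
  \text{(main term)} = 2 X^{1/2}\cdot\frac18\cdot\frac{1}{2\pi i}\int_{(c)} \rho(w)\,\frac{\delta^w}{w}\,dw.
\end{equation}
Crucially, the remaining $w$-integral carries all of the $\delta$-dependence and no further power of $X$. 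I would recognize it as a Mellin inversion of the residue function, and expect it to reconstruct the cumulative distribution of the ratio $(a/b)^2$. Since uniform equidistribution of the angle on $x^2+y^2=2$ gives ratio-density proportional to $\bigl(\delta(2-\delta)\bigr)^{-1/2}$, whose antiderivative is $\arcsin(\sqrt{\delta/2})$, the integral should collapse — after collecting the normalizing constants — to the stated $\frac{2}{\pi^2}\arcsin(\sqrt{\delta/2})\,X^{1/2}$. A sanity check at $\delta = 1$ returns $\frac{1}{2\pi}X^{1/2}$ for the unrestricted count, matching the linear growth of rational points of height up to $X^{1/2}$.

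The error term is the residual integral after the shift, and bounding it is where the genuine work lies. Moving the $s$-contour to $\Re s = \tfrac38$ bounds the remainder by $X^{3/8}$ times the integral of $\mathcal{D}(s-w,w)\,\delta^w/(ws)$ over the shifted contours, which must be controlled using the spectral expansion of Theorem~\ref{thm:double_ds_decomposition} — in particular the sum over dihedral Maass forms on $\Gamma_0(8)$ coming from $D_h(s)$. I expect the main obstacle to be establishing polynomial growth bounds for this spectral sum in vertical strips that are simultaneously strong enough to (i) justify pushing the $s$-contour as far left as $\Re s = \tfrac38$ without meeting uncontrolled singularities and (ii) render the $w$-integral convergent after truncation. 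I would combine these bounds with the truncation error from the sharp Perron cutoffs (smoothing first, if convenient, and then removing the smoothing), optimizing the truncation heights against the contour abscissa. The resulting $\tfrac18$ saving below the polar exponent $\tfrac12$ is precisely the strength afforded by the available bounds on the dihedral spectral sum, and produces the claimed $O_\epsilon(X^{3/8+\epsilon})$.
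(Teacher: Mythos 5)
Your setup is the paper's: the double Mellin transform against $\mathcal{D}(s-w,w)$, the polar line $s_1+w=\tfrac12$ coming from the $m=0$ (Eisenstein/regularization) term of Theorem~\ref{thm:double_ds_decomposition}, and the main term as the residue at $s=\tfrac12$ all match Section~\ref{sec:applications}. Two things go wrong after that. The smaller issue is the main term: you never compute the residue $\rho(w)$ or evaluate the $w$-integral; you instead appeal to equidistribution of rational points on $x^2+y^2=2$ to assert that it "should collapse" to $\arcsin(\sqrt{\delta/2})$. Since this theorem is precisely the equidistribution-type count being proven, that appeal cannot substitute for the computation; the paper evaluates the residue explicitly and uses the Mellin pair
\begin{equation}
  \frac{1}{2\pi i}\int_{(\frac14)}\frac{\Gamma(\frac12-w)}{\Gamma(1-w)\,w}\,z^w\,dw
  = \frac{2}{\sqrt{\pi}}\arcsin(\sqrt{z}),
\end{equation}
which is a concrete identity you would still need to supply.

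The genuine gap is the error analysis. Your claim that shifting to $\Re s=\tfrac38$ "bounds the remainder by $X^{3/8}$ times the integral of $\mathcal{D}(s-w,w)\delta^w/(ws)$ over the shifted contours" fails because that integral diverges: by Lemma~\ref{lem:gamma_ratio_lemma} the dihedral spectral sum grows polynomially (roughly like $|s-w|+|s|^{1/2}|s-w|^{1/2}$) in vertical strips, while the sharp kernel $1/(ws)$ decays only to first order, so there is no absolute convergence to exploit. Once you regularize --- by smoothing (the paper's $u_{\pm x}$, with error $X^{1/2}/x$) or by truncating the sharp Perron contours at height $T$ (with error roughly $X^{1/2}/T$) --- a second parameter enters, and the final exponent comes from a balance, not from the contour position. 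The paper shifts $s$ to $\tfrac14+\epsilon$, which is the natural barrier (the pole of $\Gamma(s-w)$ at $\Re(s-w)=0$ with $\Re w=\tfrac14$), obtaining shifted-integral contributions $O(X^{1/4+\epsilon}x^{1+\epsilon})$ and then optimizing $x=X^{1/8}$ to balance against $X^{1/2}/x$; this is where $\tfrac38=\tfrac12-\tfrac18=\tfrac14+\tfrac18$ comes from. If you instead park the contour at $\Re s=\tfrac38$, the same balance gives $X^{3/8}x^{1+\epsilon}+X^{1/2}/x$, optimizing to $O(X^{7/16+\epsilon})$ --- strictly worse than claimed. So your attribution of the exponent $\tfrac38$ to the contour abscissa (and to "the strength of the spectral bounds" at that abscissa) is a misconception; the correct mechanism is maximal shift to $\tfrac14+\epsilon$ plus optimization of the smoothing parameter, and without it your argument does not recover the stated error term.
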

As shown in Section~\ref{sec:connections}, the main term above agrees with known results
concerning the equidistribution of rational points on the circle.

In
Section~\ref{sec:further_applications}, we prove three more theorems as further
applications. First, we count primitive APs of squares with bounded maximum.

\begin{theorem*}[Theorem~\ref{thm:boundedmax}] For any $\epsilon > 0$, the number of
primitive APs of squares $\{a^2,b^2,c^2\}$ with $c^2 \leq X$ is
\[\frac{\sqrt{2}}{\pi^2} \log(1+\sqrt{2})X^{\frac{1}{2}} + O_\epsilon\big(X^{\frac{3}{8}+\epsilon}\big).\]
\end{theorem*}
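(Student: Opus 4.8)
The plan is to deduce this from Theorem~\ref{thm:ratl_points} by integrating over the slope of the progression. Let $N_{\max}(X)$ denote the quantity to be estimated, and let $N(Y,\delta)$ be the number of primitive APs $\{a^2,b^2,c^2\}$ with $b^2\le Y$ and $(a/b)^2\le\delta$, so that Theorem~\ref{thm:ratl_points} reads $N(Y,\delta)=P(\delta)\,Y^{1/2}+O_\epsilon(Y^{3/8+\epsilon})$ with $P(\delta)=\tfrac{2}{\pi^2}\arcsin(\sqrt{\delta/2})$, the error being uniform in $\delta\in[0,1]$. Writing $\delta=(a/b)^2$ for the squared slope, the largest term obeys $c^2=2b^2-a^2=b^2(2-\delta)$, so among progressions of slope exactly $\delta$ the bound $c^2\le X$ is precisely $b^2\le\phi(\delta)$, where $\phi(\delta):=X/(2-\delta)$. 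Regarding the primitive progressions as a point measure in the $(b^2,\delta)$-plane and integrating the $\delta$-increments of $N$ along the moving level $b^2=\phi(\delta)$ gives
\begin{equation}
  N_{\max}(X)=\int_{0}^{1}\big(\partial_\delta N\big)\big(\phi(\delta),\delta\big)\,d\delta,
\end{equation}
where $\partial_\delta$ differentiates the second argument only; this is the rigorous statement that a progression of slope $\delta$ is counted exactly when $b^2\le\phi(\delta)$, while the progressions swept in as $\phi$ increases have smaller slope and are correctly excluded.

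For the main term I would substitute $N(Y,\delta)=P(\delta)Y^{1/2}+E(Y,\delta)$ and integrate the smooth part. Using $P'(\delta)=\tfrac{1}{\pi^2\sqrt{\delta(2-\delta)}}$ and $\phi(\delta)^{1/2}=X^{1/2}(2-\delta)^{-1/2}$, the smooth contribution is
\begin{equation}
  \int_0^1 P'(\delta)\,\phi(\delta)^{1/2}\,d\delta
  =\frac{X^{1/2}}{\pi^2}\int_0^1\frac{d\delta}{\sqrt{\delta}\,(2-\delta)}.
\end{equation}
The substitution $\delta=u^2$ turns this into $\frac{2X^{1/2}}{\pi^2}\int_0^1\frac{du}{2-u^2}=\frac{X^{1/2}}{\pi^2}\cdot\frac{1}{\sqrt2}\log\frac{\sqrt2+1}{\sqrt2-1}$, and since $\frac{\sqrt2+1}{\sqrt2-1}=(1+\sqrt2)^2$ this equals $\frac{\sqrt2}{\pi^2}\log(1+\sqrt2)X^{1/2}$, the claimed main term.

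The main obstacle is the error term $\mathcal E:=\int_0^1(\partial_\delta E)(\phi(\delta),\delta)\,d\delta$, an integral of the \emph{increments} of the remainder $E(Y,\delta)=N(Y,\delta)-P(\delta)Y^{1/2}$ along the curve $b^2=\phi(\delta)$. The uniform bound $E(Y,\delta)\ll Y^{3/8+\epsilon}$ controls $E$ pointwise but not its variation, and attempting to remove the increment by integration by parts merely trades a $\delta$-increment for a $b^2$-increment and conversely, returning a tautology after two steps; so the pointwise bound alone is insufficient. To close the estimate I would instead bound $\mathcal E$ at its source: reinterpret $N_{\max}(X)$ as $\sum_{2m-h\le X}r_1(h)r_1(m)r_1(2m-h)$ over coprime $(m,h)$, equivalently $m\le (X+h)/2$ for each $h$, and rerun the contour argument underlying Theorem~\ref{thm:ratl_points} with this region in place of $\{b^2\le X,\ (a/b)^2\le\delta\}$; the single Dirichlet series $D_h(s)$ of Section~\ref{sec:singleds}, summed against $r_1(h)$ and the coprimality condition, carries its main pole at $s=\tfrac12$, whose residue reproduces the integral above, while the same subconvex bounds that produce the $X^{3/8+\epsilon}$ error in Theorem~\ref{thm:ratl_points} control the shifted contour here. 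Equivalently, one may smooth the sharp cutoff $c^2\le X$ over a multiplicative window of width $X^{-1/8}$ (balancing the $\asymp X^{3/8}$ progressions in the transition region against the target error), estimate the smoothed sum through the meromorphic continuation of $\mathcal D(s,w)$, and absorb the transition region into the error; either route keeps the total error at $O_\epsilon(X^{3/8+\epsilon})$.
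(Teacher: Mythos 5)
Your main-term calculation (integrating $P'(\delta)\phi(\delta)^{1/2}$ to get $\tfrac{\sqrt{2}}{\pi^2}\log(1+\sqrt{2})X^{1/2}$) is a correct consistency check, but the proof has a genuine gap that you yourself half-concede: the reduction to Theorem~\ref{thm:ratl_points} cannot control the error, since the total variation of $E(Y,\delta)$ in $\delta$ is of order $Y^{1/2}$ (the counting function jumps at every progression), so the pointwise bound $E\ll Y^{3/8+\epsilon}$ says nothing about $\int(\partial_\delta E)(\phi(\delta),\delta)\,d\delta$. Everything therefore rests on your fallback, and the fallback does not work as stated. The region you propose, $h\le m$ and $m\le (X+h)/2$, involves $h$ \emph{additively} in the cutoff, so its indicator cannot be written as a Mellin transform acting on $m^{-s}h^{-w}$: no integral transform of $\mathcal{D}(s,w)$, nor of $D_h(s)$ summed against $r_1(h)h^{-w}$, produces this region, and the shift is not negligible because $h$ ranges up to $X$. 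The missing idea --- and the paper's actual route --- is to relabel the progression in \emph{decreasing} order, replacing $h$ by $2m-h$ (this preserves coprimality and the weight $r_1(h)r_1(m)r_1(2m-h)$), so that the largest term becomes the summation variable $h$ itself and the constraints become the multiplicative conditions $m/h\le 1$ and $h/X\le 1$. Smoothing these two ratios gives $T_{\pm x}(X)=\frac{1}{(2\pi i)^2}\iint \mathcal{D}(s,w-s)\,X^w U_{\pm x}(s)U_{\pm x}(w)\,ds\,dw$; its $m=0$ part decouples into an $s$-integral evaluated via the Mellin pair $\frac{1}{2\pi i}\int \frac{2^{3s}\Gamma(s)^2}{\Gamma(2s)}z^s\frac{ds}{s}=4\,\mathrm{arctanh}\big(\sqrt{1-\tfrac{1}{2z}}\big)$ at $z=1$, giving $4\log(1+\sqrt{2})$, times a $w$-integral whose pole at $w=\tfrac{1}{2}$ gives $\tfrac{2\sqrt{2}}{\pi^2}X^{1/2}$; the $m\neq 0$ part is $O(X^{1/4+\epsilon}x^{1+2\epsilon})$ by Lemma~\ref{lem:gamma_ratio_lemma}, and $x=X^{1/8}$ closes the argument.

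There is a second, independent error in the fallback: $D_h(s)$ does \emph{not} have a pole at $s=\tfrac{1}{2}$. By Theorem~\ref{thm:single_ds_decomposition} its rightmost poles lie on $\Re s=0$, coming from the factors $\Gamma(s\pm it_m)$; correspondingly, for fixed $h$ the counts $\sum_{m\le M}r_1(m)r_1(2m-h)$ are only logarithmic in $M$ (they parametrize solutions of a Pell-type equation $2b^2-c^2=h$), so no main term exists before the $h$-sum is performed. The pole that drives Theorem~\ref{thm:boundedmax} is the pole of $\zeta^{(2)}(2w)$ (equivalently, of $\zeta(2s+2w)$ inside $\mathcal{D}(s,w)$) created only by summing over $h$, which is precisely why the argument must be organized around the double Dirichlet series from the start rather than around $D_h(s)$ with an $h$-dependent cutoff. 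A minor further point: the paper's exponent $3/8$ requires only convexity bounds for $L(s,\eta^{2m})$ (via Lemma~\ref{lem:gamma_ratio_lemma}); subconvexity enters only in the remark on improving the exponent.
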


This result is then applied to count primitive APs of squares with
independently bounded first
and second terms.

\begin{theorem*}[Theorem~\ref{thm:naturalXY}]
Suppose that $Y \leq X$. For any $\epsilon > 0$, the number of primitive APs of
squares $\{a^2,b^2,c^2\}$ for which $a^2 \leq Y$ and $b^2 \leq X$ is
\begin{equation}
\begin{split}
  \frac{1}{\sqrt{2} \pi^2} Y^{\frac{1}{2}} \log\big(X/Y\big)
  + c\,Y^{\frac{1}{2}}
  +
  O_\epsilon\big(X^\epsilon Y^{\frac{3}{8}+\epsilon}\big),
\end{split}
\end{equation}
in which $c = \sqrt{2}\big(1+\frac{3}{2}\log 2 -\log(1+\sqrt{2})\big)/\pi^2$.
\end{theorem*}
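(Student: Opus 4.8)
\emph{Plan.} Write $N(X,Y)$ for the count in question and recall the elementary relations $a^2 \le b^2 \le c^2$ and $c^2 = 2b^2 - a^2$ for a primitive AP $\{a^2,b^2,c^2\}$. I would deduce $N(X,Y)$ from the one--variable asymptotics already proved, namely Theorems~\ref{thm:ratl_points} and~\ref{thm:boundedmax}, by a dissection of the range of the middle term followed by partial summation. Since $a^2 \le b^2$, the constraint $a^2 \le Y$ is automatic once $b^2 \le Y$, so I split
\[
  N(X,Y) = \#\{b^2 \le Y\} + \#\{Y < b^2 \le X,\ a^2 \le Y\}.
\]
Theorem~\ref{thm:ratl_points} with $\delta = 1$ (so that $\arcsin\sqrt{1/2} = \pi/4$) evaluates the first piece as $\tfrac{1}{2\pi}Y^{1/2} + O_\epsilon(Y^{3/8+\epsilon})$. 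For the second piece I rewrite the absolute constraint $a^2 \le Y$ as the relative constraint $(a/b)^2 \le Y/b^2$, which is exactly the shape counted by Theorem~\ref{thm:ratl_points} with the $b$--dependent parameter $\delta = Y/b^2$.

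Treating Theorem~\ref{thm:ratl_points} as giving the joint density of APs in the middle term $t=b^2$ and applying partial summation, the second piece becomes
\[
  \frac{1}{\pi^2}\int_{Y}^{X}\arcsin\!\sqrt{\tfrac{Y}{2t}}\,\frac{dt}{\sqrt t} \;+\; (\text{error}),
\]
where $t^{-1/2}$ is the density obtained by differentiating $t^{1/2}$. After the substitution $\sin\theta = \sqrt{Y/(2t)}$, this integral is $\tfrac{\sqrt{2Y}}{\pi^2}\int \theta\, d(-\csc\theta)$, and integration by parts introduces $\int \csc\theta\,d\theta$. The upper endpoint $t=X$ (small $\theta$) produces the logarithmic term $\tfrac{1}{\sqrt 2\,\pi^2}Y^{1/2}\log(X/Y)$ together with constant multiples of $Y^{1/2}$, while the lower endpoint $t=Y$ corresponds to $\theta = \pi/4$ and contributes $\log\!\big(\csc\tfrac{\pi}{4}+\cot\tfrac{\pi}{4}\big) = \log(1+\sqrt 2)$ --- precisely the constant of Theorem~\ref{thm:boundedmax}, which is how that theorem enters. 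Assembling the two pieces, the spurious $\tfrac{1}{2\pi}Y^{1/2}$ terms cancel and one is left with exactly $\tfrac{1}{\sqrt2\,\pi^2}Y^{1/2}\log(X/Y) + cY^{1/2}$ with $c = \sqrt2\big(1+\tfrac32\log 2 - \log(1+\sqrt2)\big)/\pi^2$; this is routine trigonometric bookkeeping.

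The main obstacle is the error term, which is required to be $O_\epsilon(X^\epsilon Y^{3/8+\epsilon})$, i.e.\ governed by the \emph{small} variable $Y$ with only a harmless factor $X^\epsilon$. Inserting the error $O(t^{3/8+\epsilon})$ of Theorem~\ref{thm:ratl_points} naively into the partial summation and integrating up to $t=X$ yields only $O(X^{3/8+\epsilon})$, which is far too weak. To recover the dependence on $Y$ I would instead organize the count by its smallest term, summing over the $O(Y^{1/2})$ admissible values of $a$ and, for each fixed $h=a^2 \le Y$, invoking the single Dirichlet series $D_h(s)$ studied in Section~\ref{sec:singleds}: its meromorphic continuation furnishes, for each $h$, an asymptotic for $\sum_{m \le X} r_1(m)\,r_1(2m-h)$ whose main terms resum to the integral above. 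The crux is to bound the accumulated error $\sum_{a \le \sqrt Y}(\cdots)$ with dependence on $X$ no worse than $X^\epsilon$; establishing this uniform control across the whole family $\{D_h\}_{h \le Y}$ --- rather than extracting the main term --- is where the real difficulty lies. A minor additional point is that the displayed main term is the leading form as $X/Y \to \infty$; the exact endpoint evaluation differs by $O(Y^{3/2}/X)$, which must be checked to lie within the claimed error.
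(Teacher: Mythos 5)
Your dissection and main-term bookkeeping are correct: splitting at $b^2=Y$, representing the second range by $\tfrac{1}{\pi^2}\int_Y^X \arcsin\sqrt{Y/(2t)}\,t^{-1/2}\,dt$, and evaluating via $\sin\theta=\sqrt{Y/(2t)}$ does produce $\tfrac{1}{\sqrt{2}\pi^2}Y^{1/2}\log(X/Y)+cY^{1/2}$ after the two $\tfrac{1}{2\pi}Y^{1/2}$ terms cancel. The genuine gap is the one you name yourself and then defer: the error term $O_\epsilon(X^\epsilon Y^{3/8+\epsilon})$ is never established, and it is the entire analytic content of the theorem. Partial summation against Theorem~\ref{thm:ratl_points} unavoidably pays at least one copy of $O(X^{3/8+\epsilon})$, and your fallback---fixing $h=a^2\le Y$, invoking $D_h(s)$ from Section~\ref{sec:singleds}, and summing over the $O(Y^{1/2})$ values of $a$---is only sketched. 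It faces a concrete obstruction: an unsmoothed Perron analysis of each $D_h(s)$ leaves a per-$h$ error carrying a positive power of $X$ (from truncation, and from the shifted contour where $\sum_{m\neq 0}\lambda_m(h)\Gamma(s+it_m)\Gamma(s-it_m)/\Gamma(2s)$ must be controlled uniformly in $h$), and summing $\asymp Y^{1/2}$ such errors cannot yield a bound in which $X$ enters only as $X^\epsilon$. The paper never slices in $h$: it keeps both variables inside the smoothed double Mellin transform~\eqref{eq:naturalXY_model_transform} of $\mathcal{D}(s,w)$, so that in the spectral part of Theorem~\ref{thm:double_ds_decomposition} the $X$-contour can be pushed to $\Re s=\epsilon$ (the $L(2s+2w,\eta^{2m})$ are entire for $m\neq 0$) while the $Y$-contour stays at $\Re w=\tfrac14$; Lemma~\ref{lem:gamma_ratio_lemma} then gives $I^{\mathrm{spec}}_{\pm x}\ll X^\epsilon Y^{1/4}x^{1+\epsilon}$, which balanced against the smoothing loss $Y^{1/2}/x$ at $x=Y^{1/8}$ produces the stated error. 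That mechanism---$X$ appearing only through $X^s$ on $\Re s=\epsilon$---is exactly what your plan lacks, and supplying the ``uniform control across the family $\{D_h\}$'' amounts to rebuilding the double Dirichlet series argument.

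A second point: the issue you call minor is not minor. The exact evaluation of your integral differs from the displayed main term by $\asymp Y^{3/2}/X$, and $Y^{3/2}/X\le X^\epsilon Y^{3/8+\epsilon}$ fails once $X\ll Y^{9/8}$. Indeed at $Y=X$ your own dissection gives $\tfrac{1}{2\pi}X^{1/2}+O_\epsilon(X^{3/8+\epsilon})$ (Theorem~\ref{thm:ratl_points} with $\delta=1$), whereas the displayed formula gives $cX^{1/2}$ with $c\approx 0.16598\neq \tfrac{1}{2\pi}\approx 0.15915$; the discrepancy is of size $X^{1/2}$ and exceeds the claimed error. So an argument along your lines must either carry the $O(Y^{3/2}/X)$ term explicitly or restrict to $X\gg Y^{9/8}$. (This tension is partly inherited from the theorem as stated: the paper's own proof discards a shifted $w$-integral of size roughly $Y^{1/2}(Y/X)^{1/2-\epsilon}$, which is admissible only in the same range. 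But a complete proof cannot dismiss this as routine bookkeeping.)
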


Lastly, we count primitive APs of squares in which the product of the first two terms is
bounded.

\begin{theorem*}[Theorem~\ref{thm:firsttwoX}]
For any $\epsilon > 0$, the number of primitive APs of squares $\{a^2,b^2,c^2\}$ for which $ab \leq X$ is
  \begin{equation}
  \begin{split}
    \frac{2\sqrt{2}}{\pi^2} \, _2F_1(\tfrac{1}{4},\tfrac{1}{2},\tfrac{5}{4},\tfrac{1}{2})
	X^{\frac{1}{2}}
    +
    O_\epsilon\big(X^{\frac{3}{8}+\epsilon} \big).
  \end{split}
  \end{equation}
\end{theorem*}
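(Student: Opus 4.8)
The plan is to obtain Theorem~\ref{thm:firsttwoX} from the meromorphic continuation of Theorem~\ref{thm:double_ds_decomposition} by restricting the double Dirichlet series to the diagonal $s=w$. Because the numerator of \eqref{eq:Dsw_definition} is supported on $h=a^2$, $m=b^2$ with $2m-h=c^2$ a perfect square, and because $(m,h)=1\Leftrightarrow(a,b)=1$, on the diagonal we have
\begin{equation}
  \mathcal{D}(s,s)=\sum_{\substack{(a,b)=1\\ 2b^2-a^2=c^2}}\frac{8}{(ab)^{2s}},
\end{equation}
a single Dirichlet series in the variable $ab=\sqrt{mh}$ whose summatory function is exactly a constant multiple of the count in the theorem. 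Thus $N(X):=\#\{\text{primitive APs}:ab\le X\}$ is recovered by a one-variable Perron/Tauberian argument applied to $\mathcal{D}(s,s)$. I expect the polar locus of $\mathcal D$ to meet the line $\{s=w\}$ in a single point, leaving a simple pole of $\mathcal D(s,s)$ at $s=\tfrac14$ as its rightmost singularity, which yields a main term of order $X^{1/2}$.

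The residue at $s=\tfrac14$ fixes the leading constant, and the cleanest way to evaluate it is to match against the geometric count of Theorem~\ref{thm:ratl_points}. Writing $\delta=(a/b)^2$, $\beta=b^2$, and $g(\delta)=\arcsin\sqrt{\delta/2}$, Theorem~\ref{thm:ratl_points} records the joint density of primitive APs as $\tfrac{2}{\pi^2}g'(\delta)\,\tfrac12\beta^{-1/2}\,d\delta\,d\beta$ with $g'(\delta)=\tfrac12(\delta(2-\delta))^{-1/2}$. The constraint $ab\le X$ is $\delta\beta^2\le X^2$, i.e. $\beta\le X\delta^{-1/2}$; carrying out the convergent $\beta$-integral first collapses the main term to
\begin{equation}
  N(X)\sim\frac{2}{\pi^2}X^{1/2}\int_0^1 g'(\delta)\,\delta^{-1/4}\,d\delta
  =\frac{X^{1/2}}{\pi^2}\int_0^1\frac{d\delta}{\delta^{3/4}\sqrt{2-\delta}}.
\end{equation}
Euler's representation ${}_2F_1(a,b;c;z)=B(b,c-b)^{-1}\int_0^1 t^{b-1}(1-t)^{c-b-1}(1-zt)^{-a}\,dt$, applied with $(a,b,c,z)=(\tfrac12,\tfrac14,\tfrac54,\tfrac12)$ and $B(\tfrac14,1)=4$, evaluates the integral to $2\sqrt2\,{}_2F_1(\tfrac14,\tfrac12;\tfrac54;\tfrac12)$, producing exactly the stated constant $\tfrac{2\sqrt2}{\pi^2}\,{}_2F_1(\tfrac14,\tfrac12;\tfrac54;\tfrac12)$. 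The same density integral, with $\delta^{-1/4}$ replaced by $(2-\delta)^{-1/2}$ or by the indicator $\mathbf 1_{\delta\le\delta_0}$, reproduces the $\log(1+\sqrt2)$ and $\arcsin$ constants of Theorems~\ref{thm:boundedmax} and~\ref{thm:ratl_points}, a reassuring consistency check.

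The error term $O_\epsilon(X^{3/8+\epsilon})$ is where the argument has real content. On the diagonal route this amounts to bounding the continuation of $\mathcal D(s,s)$ in vertical strips to the left of $s=\tfrac14$ and shifting the Perron contour; the exponent $\tfrac38$ is the same dihedral-Maass spectral error already present in Theorems~\ref{thm:ratl_points}–\ref{thm:naturalXY}, originating in the sum over dihedral Maass forms on $\Gamma_0(8)$. The main obstacle is the polar and growth structure of $\mathcal D$ \emph{along the diagonal}: the earlier theorems treat box regions amenable to a two-variable Mellin analysis, whereas the product condition $ab\le X$ (equivalently $mh\le X^2$) is intrinsically diagonal, so I must verify that the restriction $s=w$ leaves only a single simple pole at $\tfrac14$ with no intervening singularities, and that the spectral sum remains absolutely convergent after the contour shift with a uniform power saving in the spectral parameter. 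This is precisely the step the outline flags as requiring ``significantly more specificity and care'' than the cusp-form case.

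I would keep the purely geometric reduction — feeding the density of Theorem~\ref{thm:ratl_points} directly into the integral above — in reserve as an independent derivation of the constant, but I do not expect it to deliver the stated error on its own. The difficulty there is the regime of ``flat'' progressions with $\delta=(a/b)^2$ small, where the size threshold $X\delta^{-1/2}$ is large and the uniform $O(T^{3/8+\epsilon})$ bound, integrated against $d\delta$, degrades the exponent. Those flat APs would have to be counted separately: for each fixed small $a$ the admissible $b$ are the $b$-coordinates of solutions to the Pell-type equation $c^2-2b^2=-a^2$, which grow geometrically and hence number $O_\epsilon(X^\epsilon)$ up to any height, giving $\#\{ab\le X,\ (a/b)^2\le\eta\}\ll X^{1/2+\epsilon}\eta^{1/4}$. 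The fact that the diagonal restriction handles all progressions uniformly, sidestepping this angular truncation entirely, is the main reason I would make it the primary route.
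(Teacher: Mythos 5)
Your primary route fails at its very first step: the diagonal restriction $\mathcal{D}(s,s)$ does not generate the count in Theorem~\ref{thm:firsttwoX}. Your identity $\mathcal{D}(s,s)=\sum 8\,(ab)^{-2s}$ is correct as a sum over \emph{ordered} coprime pairs $(h,m)=(a^2,b^2)$ with $2m-h$ a square, but each unordered primitive AP $\{a^2,b^2,c^2\}$ with $a<b<c$ appears in that sum twice: once as $(h,m)=(a^2,b^2)$, contributing $8\,(ab)^{-2s}$, and once as the reversed pair $(h,m)=(c^2,b^2)$, contributing $8\,(cb)^{-2s}$. Consequently a Perron argument applied to $\mathcal{D}(s,s)$ with cutoff $X^2$ counts $N_{ab}(X)+N_{cb}(X)$, where $N_{ab}$ is the quantity in the theorem and $N_{cb}$ counts primitive APs with $cb\le X$; these have genuinely different asymptotics, so the contamination is not a constant-multiple bookkeeping issue. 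Concretely, the $m=0$ term of $\mathcal{D}(s,s)$ has a simple pole at $s=\tfrac14$ with
\begin{equation}
  \tfrac{1}{8}\Res_{s=\frac14}\mathcal{D}(s,s)\,\frac{X^{2s}}{s}
  \;=\;
  \frac{2^{1/4}\,\Gamma(\tfrac14)^2}{2\pi^{5/2}}\,X^{\frac12}
  \;\approx\; 0.447\,X^{\frac12},
\end{equation}
whereas the theorem's constant is $\frac{2\sqrt2}{\pi^2}\,{}_2F_1(\tfrac14,\tfrac12;\tfrac54;\tfrac12)\approx 0.305$; the discrepancy is exactly your own density integral run with the constraint $cb\le X$, namely $\frac{1}{\pi^2}\int_0^1 \delta^{-1/2}(2-\delta)^{-3/4}\,d\delta\approx 0.142$. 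The constraint $h\le m$ simply cannot be seen on the diagonal, and this is precisely why the paper does not restrict to $s=w$. Instead it keeps two variables: it smooths both the product $hm/X^2$ and the ratio $h/m$ with $u_{\pm x}$, producing the transform~\eqref{eq:firsttwoX_model_transform} involving $\mathcal{D}(s-w,s+w)U_{\pm x}(s)U_{\pm x}(w)X^{2s}$, in which $s$ is conjugate to the product and $w$ to the ratio. The main term then comes from the pole at $s=\tfrac14$ of the $m=0$ piece of Theorem~\ref{thm:double_ds_decomposition}, and the hypergeometric constant arises not from an arc-length density but from the Mellin--Barnes identity
\begin{equation}
  \frac{1}{2\pi i}\int_{(\frac18)}\frac{\Gamma(\tfrac14-w)^2}{w\,\Gamma(\tfrac12-2w)}\,z^w\,dw
  = 8z^{1/4}\,{}_2F_1\big(\tfrac14,\tfrac12;\tfrac54;4z\big)
\end{equation}
at $z=\tfrac18$, handled via Lemma~\ref{lem:MT_eval_IPB}; the spectral ($m\neq0$) part is controlled by Lemma~\ref{lem:gamma_ratio_lemma}, and balancing at $x=X^{1/8}$ yields the exponent $\tfrac38$.

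Two smaller points. First, your fallback geometric derivation does produce the correct constant --- your Euler-integral evaluation agrees with the paper's ${}_2F_1$ value, which is a useful consistency check --- but, as you yourself concede, it cannot deliver the error term $O_\epsilon(X^{3/8+\epsilon})$ because Theorem~\ref{thm:ratl_points} is not uniform in $\delta$ in the regime of flat progressions, so it cannot stand as the proof. Second, even setting aside the counting defect, your diagonal route quietly assumes the smoothing/contour machinery (sharp Perron alone does not give $X^{3/8}$; one needs the $u_{\pm x}$ smoothing and the balance $X^{1/2}/x$ against $X^{1/4+\epsilon}x^{1+\epsilon}$). Any attempt to repair the diagonal by isolating $N_{ab}$ from $N_{ab}+N_{cb}$ requires introducing a second variable that detects $h\le m$ --- at which point you have reconstructed the paper's two-variable argument.
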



\section*{Acknowledgements} CK is supported in part by
NSFC (No.11901585). DLD gratefully acknowledges support from
EPSRC Programme Grant EP/K034383/1 LMF:\ L-Functions and Modular Forms and
support from the Simons Collaboration in Arithmetic Geometry, Number Theory,
and Computation via the Simons Foundation grant 546235. The authors would like
to thank Dan Bump, Sol Friedberg, Jeff Hoffstein, Henryk Iwaniec, Alex
Kontorovich, Min Lee, Philippe Michel, and Paul Nelson for many helpful
conversations.

\section{Connections to Rational Points and Right Triangles}%
\label{sec:connections}

Before proving our main results, we consider connections to rational points
on the circle $x^2 + y^2 = 2$ and to integer right triangles.

\subsection{Equidistribution of points on the circle}%
\label{ssec:equidistribution}



In Theorem~\ref{thm:ratl_points}, we consider the number of primitive
APs of squares $\{a^2,b^2,c^2\}$ for which $(a/b)^2\leq \delta$. This result
can also be seen through the lens of equidistribution.

To see this connection, note that $a^2 + c^2 =
2b^2$ in an AP of squares, and hence $(a/b, c/b)$ is a rational point on the
circle $x^2 + y^2 = 2$. Let $A(b)$ denote the number of rational points on
$x^2 + y^2 = 2$ of the (reduced) form $(a/b, c/b)$. We see that
\begin{equation}
  \sum_{d \mid b} A(d)
  = \#\{(a, c) \in \mathbb{Z}^2 : a^2 + c^2 = 2b^2\}
  = r_2(2b^2) = r_2(b^2).
\end{equation}
Recalling that $r_2(n)/4$ is multiplicative, we can compute the Dirichlet series
\begin{equation} \label{eq:D_0(s)_variant}
  \sum_{n \geq 1} \frac{A(n)}{n^s}
  =
  \frac{4 \zeta(s) L(s, \chi_4)}{(1 + 2^{-s})\zeta(2s)},
\end{equation}
where $\chi_4 = (\frac{-1}{\cdot})$ is the non-trivial character of modulus $4$.
An application of Perron's formula and trivial estimates show that the number of
rational points on $x^2 + y^2 = 2$ of the (reduced) form $(a/b, c/b)$ with $b
\leq \sqrt{X}$ is
\begin{equation} \label{eq:primitive_point_count}
  \sum_{b \leq \sqrt{X}} A(b)
  =
  \frac{4}{\pi} X^\frac{1}{2} + O(X^{\frac{1}{3} + \epsilon}).
\end{equation}
%
%

If we assume that rational points on the circle equidistribute with respect to
arc length as their denominators grow, then we should expect the number of rational
points $(a/b, c/b)$ on $x^2 + y^2 = 2$ in the first quadrant with $b \leq
\sqrt{X}$ and $(a/b) \leq \sqrt{\delta}$ to be approximately
\begin{equation}
  \frac{\arcsin(\sqrt{\delta / 2})}{2\pi} \cdot \frac{4}{\pi} X^\frac{1}{2}
  = \frac{2}{\pi^2} \arcsin(\sqrt{\delta/2}) X^\frac{1}{2}.
\end{equation}
This agrees exactly with the main term
which appears in Theorem~\ref{thm:ratl_points}.

\begin{remark}
  An elementary proof of the equidistribution of rational points on $x^2+y^2=1$
  with respect to arc length as the size of the denominators grow can be found,
  for example, in~\cite{TB18}.  The methods applied therein can be adapted to
  the circle $x^2+y^2 =2$ via the linear map $(x,y) \to (x+y,x-y)$.
  %
  More generally,
  there is a deep and rich literature on studying aspects of equidistribution
  on spheres, and more generally on varieties. The analogous case in $3$
  dimensions on the unit sphere is proven in~\cite{duke2003rational}.
However, the authors are not aware of any equidistribution results for rational points
 on varieties which employ the properties of multiple Dirichlet series.
\end{remark}

\subsection{Right triangles}

There is a well-known one-to-one correspondence between APs of squares with
common difference $t$ and right triangles with area $t$, given by
\begin{align}
  \big\{
    (a, b, c): b^2 - a^2 = t = c^2 - b^2
  \big\}
  \leftrightarrow&
  \big\{ (\alpha,\beta,\gamma): \alpha^2 + \beta^2 = \gamma^2, \; \alpha\beta/2 = t \big\}
  \\
  (a, b, c) \mapsto (c-a, c + a, 2 b),
  \quad&
  (\alpha,\beta,\gamma) \mapsto \big(
    \tfrac{\beta-\alpha}{2}, \tfrac{\gamma}{2}, \tfrac{\beta+\alpha}{2}
  \big). \label{eq:triangle_bijection}
\end{align}
Thus counts of primitive APs of squares can lead to counts for primitive
Pythagorean triples.

In particular, each of our main theorems implies a corresponding result about
the number of primitive right triangles under certain constraints.  For example,
Theorem~\ref{thm:ratl_points} implies that the number of primitive right
triangles with hypotenuse at most $X$ and whose acute angles lie within
$\omega$ of $\frac{\pi}{4}$ is
\[\frac{2\omega}{\pi^2} X + O_\epsilon\big(X^{\frac{3}{4}+\epsilon}\big),\]
for any $\epsilon > 0$.  Through the same correspondence,
Theorem~\ref{thm:boundedmax} provides a count for primitive right triangles
$(\alpha,\beta,\gamma)$ for which $\alpha+\beta$ is bounded and
Theorem~\ref{thm:naturalXY} yields a count for primitive triangles of bounded
hypotenuse and separately bounded difference in leg length.


\section{The Single Dirichlet Series}\label{sec:singleds}

Let $\theta(z) = \sum_{n \in \mathbb{Z}} e(n^2 z) = \sum_{n \geq 0} r_1(n)
e(nz)$ denote the classical theta function, where $e(z) = e^{2 \pi i z}$. Then
$\theta(z)$ is a modular form of weight $1/2$ on $\Gamma_0(4)$. Consider also
the weight $0$, level $8$ Eisenstein series and Poincar\'{e} series with
character $\chi(d) = \big( \frac{2}{d} \big)$, given by
\begin{align}
  E(z, s; \chi)\label{eq:eisenstein_definition}
  &=
  {\mkern-18mu}
  \sum_{\gamma \in \Gamma_\infty \backslash \Gamma_0(8)}
  {\mkern-20mu}
  \chi(\gamma) \Im(\gamma z)^s
  =
  y^s
  + \sum_{c > 0} \sum_{\substack{d \in \mathbb{Z} \\ (8c, d) = 1}}
  \frac{y^s \big( \frac{2}{d} \big)}{\lvert 8cz + d \rvert^{2s}},
  \\
  P_h(z, s; \chi)\label{eq:poincare_definition}
  &=
  {\mkern-20mu}
  \sum_{\gamma \in \Gamma_\infty \backslash \Gamma_0(8)}
  {\mkern-20mu}
  \chi(\gamma) \Im(\gamma z)^s e(h \gamma z).
\end{align}
These sums converge absolutely for $\Re s >1$ and extend meromorphically to $s
\in \mathbb{C}$. Here and henceforth, we use $x$ and $y$ to represent the real
and imaginary parts of the complex number $z$.

In this section, we construct the single Dirichlet series
\begin{equation}\label{eq:single_ds_def}
  D_h(s) :=  \sum_{m \geq 1} \frac{r_1(m) r_1(2m - h)}{m^s}
\end{equation}
by studying the Petersson inner product
\begin{equation}
  \langle
  y^{\frac{1}{2}} \theta(2z) \overline{\theta(z)}
  - E(z, \tfrac{1}{2}, \chi), P_h(z, \overline{s}; \chi).
  \rangle,
\end{equation}
We establish in this section that $\sqrt{y} \theta(2z)\overline{\theta(z)} - E(z, \tfrac{1}{2},
\chi) \in L^2(\Gamma_0(8) \backslash \mathcal{H}; \chi)$. This result is used in
Section~\ref{sec:spectral_expansion} to study the spectral expansion and
meromorphic continuation of~\eqref{eq:single_ds_def}.

\subsection{Background on $\theta(z)$}\label{ssec:theta}

Recall that $\theta(z)$ is a modular form of weight $\frac{1}{2}$ on
$\Gamma_0(4)$. Under the action of $\gamma \in \Gamma_0(4)$, it transforms as follows:
\begin{equation}\label{eq:theta_multiplier}
  \theta(\gamma z)
  =
  \Big( \frac{c}{d} \Big) \varepsilon_d^{-1} (cz + d)^{\frac{1}{2}} \theta(z),
  \qquad \gamma = \begin{pmatrix} a&b \\ c&d \end{pmatrix} \in \Gamma_0(4),
\end{equation}
where $\varepsilon_d$ is $1$ if $d \equiv 1 \bmod 4$ and is $i$ if $d \equiv 3
\bmod 4$. The character $\big( \frac{c}{d} \big)$ refers to Shimura's extension
of the Jacobi symbol, and the square root $\sqrt z$ denotes the branch
$\sqrt z = \exp(\frac{1}{2} \log z)$ with the principal branch of the log.
The multiplier $\theta(\gamma z)/\theta(z)$ is the standard half-integral weight
multiplier. We also recall the identity
\begin{equation}\label{eq:theta_1_4}
  \theta(-1/4z) = \sqrt{- 2 i z} \theta(z).
\end{equation}

For notational convenience we define
\begin{equation}
	\Psi(z) := \theta(2z).
\end{equation}
From the computations
\begin{equation}
  2 \begin{pmatrix}
    a&b \\ c&d
  \end{pmatrix}
  z
  =
  \begin{pmatrix}
    a & 2b \\ \frac{c}{2} & d
  \end{pmatrix}
  2z
  \qquad \text{and} \qquad
  \Big( \frac{c/2}{d} \Big)
  = \Big( \frac{2}{d} \Big) \Big( \frac{c}{d} \Big),
\end{equation}
we see that  $\Psi$ transforms like
\begin{equation}\label{eq:psi_transformation_law}
  \Psi(\gamma z)
  =
  \Big( \frac{2}{d} \Big)
  \Big( \frac{c}{d} \Big) \varepsilon_d^{-1} (cz + d)^{\frac{1}{2}} \Psi(z),
  \qquad \gamma = \begin{pmatrix} a&b \\ c&d \end{pmatrix} \in \Gamma_0(8).
\end{equation}
Thus $\Psi(z)$ is a modular form of weight $\frac{1}{2}$, level $8$, and character
$\chi(d)=\big( \frac{2}{d} \big)$.

We also define
\begin{equation}
  V_1(z) := y^{\frac{1}{2}} \Psi(z) \overline{\theta(z)}.
\end{equation}
One can check that $V_1$ is an automorphic form of weight $0$,
level $8$, and character $\chi(d)$.

The congruence subgroup $\Gamma_0(8)$ has four cusps: $\infty$, $0$, $\frac{1}{2}$,
and $\frac{1}{4}$. We need to understand the behavior of $\theta$ and $\Psi$ at
each cusp. To describe this behavior, we follow Shimura~\cite{Shimura} and
Koblitz~\cite{Koblitz}.

Let $\GL(2, \mathbb{R})^+$ be the set of $2 \times 2$ matrices with positive
determinant. We define the metaplectic cover $\widetilde{G}$ to be the set of
pairs $(\gamma, \varphi(z))$, where $\gamma = \big( \begin{smallmatrix} a&b \\ c&d
\end{smallmatrix} \big) \in \GL(2, \mathbb{R})^+$ and $\varphi(z)$ is a
holomorphic function on the upper half-plane $\mathcal{H}$ such that
\begin{equation}
  \varphi(z)^2 = t \frac{cz + d}{\det(\gamma)^{1/2}},
  \quad \text{for some } t \in \mathbb{C}^\times
  \; \text{such that } \lvert t \rvert = 1.
\end{equation}
Then $\widetilde{G}$ is a group with the group law $(\gamma_1,
\varphi_1)(\gamma_2, \varphi_2) = (\gamma_1 \gamma_2, \varphi_1(\gamma_2 z)
\varphi_2(z))$.

The cover $\widetilde{G}$ surjects onto $\GL(2, \mathbb{R})^+$ through the
homomorphism $(\gamma, \varphi) \mapsto \gamma$, and we write the image of
this map as
\begin{equation}
	(\gamma, \varphi)_* := \gamma.
\end{equation}
In the other direction, for $\gamma \in \Gamma_0(8)$, define
\begin{equation}
 j_2(\gamma, z) = \frac{\theta(2\gamma z)}{\theta(2z)},
\end{equation}
which is the transformation law for $\Psi$ as in~\eqref{eq:psi_transformation_law}.
We define a homomorphism
\begin{equation}
	\gamma \mapsto \gamma^*
	:= (\gamma, j_2(\gamma, z)),
\end{equation}
which is one-to-one from $\Gamma_0(8)$ to $\widetilde{G}$.

For $\sigma = (\gamma, \varphi) \in \widetilde{G}$, we recall the definition of
the weight $k$ ($k \in \frac{1}{2}\mathbb{Z}$) slash operator on a function $f:
\mathcal{H} \longrightarrow \mathbb{C}$, given by
\begin{equation}
  f \big|_{[\sigma]}(z) := \frac{f(\gamma z)}{{\varphi(z)}^{2k}}.
\end{equation}
In what follows, we write $f |_{[\gamma]} = f |_{[\gamma^*]}$ for
$\gamma \in \Gamma_0(8)$ to denote the weight $k$ slash operator.  The weight,
either $\frac{1}{2}$ or $0$, will be clear from context.

A half-integral weight modular form $f$ on $\Gamma_0(8)$ with transformation law
$j_2$ admits Fourier expansions at each cusp $\mathfrak{a} \in \{\infty, 0,
\frac{1}{2}, \frac{1}{4} \}$ given by $f \big|_{[\sigma_{\mathfrak{a}}]}$ for
distinguished elements $\sigma_{\mathfrak{a}} \in \widetilde{G}$. Each
$\sigma_{\mathfrak{a}} \in \widetilde{G}$ projects to a classical scaling matrix
for $\mathfrak{a}$ so that $(\sigma_\mathfrak{a})^*(\infty) = \mathfrak{a}$. In addition, for
some $t \in \mathbb{C}^\times$ with $\lvert t \rvert = 1$,
$\sigma_{\mathfrak{a}}$ satisfies
\begin{equation}
  \sigma_{\mathfrak{a}}^{-1} \eta_{\mathfrak{a}}^* \sigma_{\mathfrak{a}}
  =
  (T, t),
\end{equation}
where $T = \big( \begin{smallmatrix} 1&1 \\ 0&1 \end{smallmatrix} \big)$ and
where $\eta_{\mathfrak{a}}$ generates the stabilizer $\Gamma_\mathfrak{a}$ of
the cusp $\mathfrak{a}$ in $\Gamma_0(8)$.

\subsection{Behavior at the cusps}\label{ssec:cusps}

Elements $\sigma_{\mathfrak{a}} \in \widetilde{G}$ for each cusp of
$\Gamma_0(8)$ are given by
\begin{align}
  \sigma_\infty &= \Big( \begin{pmatrix} 1 & 0 \\ 0 & 1\end{pmatrix}, 1 \Big),
  &\sigma_{0}    &= \Big(
    \begin{pmatrix} 0 & -1 \\ 8 & 0\end{pmatrix},
    \sqrt{-2\sqrt{2} z i}
  \Big),
  \\
  \sigma_{\frac{1}{2}}  &= \Big(
    \begin{pmatrix} 2 & 0 \\ 4 & 1\end{pmatrix},
    \sqrt{2 \sqrt 2 z + \tfrac{1}{\sqrt 2}}
  \Big),
  &\sigma_{\frac{1}{4}}  &= \Big(
    \begin{pmatrix} 1 & 0 \\ 4 & 1\end{pmatrix},
    \sqrt{4z + 1}
  \Big).
\end{align}
We allow $\widetilde{G}$ to act on $\mathcal{H}$ through $G$, and we write the
action as
\begin{equation}
  \sigma_\mathfrak{a} z := (\sigma_{\mathfrak{a}})_* z.
\end{equation}
By studying these actions, we compute the behavior of $\theta(z)$, $\Psi(z) = \theta(2z)$, and $V_1(z)$
at each cusp.

The behavior as $z \to i\infty$ is directly evident from the Fourier expansion
of $\theta$, and we have that
\begin{equation}
\begin{split}
  \theta(\sigma_\infty z)      &= 1 + O(e^{- 2 \pi y}),
  \qquad \Psi(\sigma_\infty z)  = 1 + O(e^{- 4 \pi y}),
  \\
  &V_1(\sigma_\infty z) = y^{\frac{1}{2}} \big( 1 + O(e^{-2 \pi y})\big).
\end{split}
\end{equation}
At the $0$ cusp, we have that
\begin{equation}
\begin{split}\label{eq:theta_psi_slash_0}
  \theta\big|_{[\sigma_0]}
  &= {(- 2 \sqrt{2} z i)}^{-\frac{1}{2}} \theta(-1/8z)
  = {(2)}^{\frac{1}{4}} \, \theta(2z)
  = {(2)}^{\frac{1}{4}} \, \Psi(z),
  \\
  \Psi\big|_{[\sigma_0]}
  &= {(- 2 \sqrt{2} z i)}^{-\frac{1}{2}} \Psi(-1/8z)
  = {(- 2 \sqrt{2} z i)}^{-\frac{1}{2}} \theta(-1/4z)
  = {(2)}^{-\frac{1}{4}} \, \theta(z),
\end{split}
\end{equation}
where we have used~\eqref{eq:theta_1_4} in each (and frequently in the sequel).
It follows that
\begin{equation}
	V_1 |_{[\sigma_0]}
	= y^{1/2}(\Psi \overline{\theta})|_{[\sigma_0]}
	= y^{1/2} \overline{\Psi} \theta = \overline{V_1},
\end{equation}
and thus
\begin{equation}
  V_1(\sigma_0 z) = y^{\frac{1}{2}} \big(1 + O(e^{-2 \pi y}) \big).
\end{equation}

The $1/2$ and $1/4$ cusps are similar to each other. Since these two cusps
play a much smaller role in this paper, we will only describe the $1/2$ cusp in detail.
We have that
\begin{align}
  \theta\big|_{[\sigma_{1/2}]}
  &= 2^{\frac{1}{4}} {(4z + 1)}^{-\frac{1}{2}} \theta\Big( \frac{2z}{4z + 1} \Big)
  = 2^{\frac{1}{4}} {(4z + 1)}^{-\frac{1}{2}} \theta\Big( \frac{1}{2 + 1/2z} \Big)
  \\
  &= 2^{\frac{1}{4}} {\Big( \frac{i}{4z} \Big)}^{\frac{1}{2}}
     \theta\Big( -\frac{1}{2} - \frac{1}{8z} \Big)
  = 2^{\frac{1}{4}} {\Big( \frac{i}{4z} \Big)}^{\frac{1}{2}}
     \Big( 2\theta(\tfrac{-1}{2z}) - \theta(\tfrac{-1}{8z}) \Big)
  \\
  &= 2^{\frac{1}{4}} \big( \theta(\tfrac{z}{2}) - \theta(2z) \big).
\end{align}
To pass from each line to the next, we apply~\eqref{eq:theta_1_4}. The
equality on the second line follows from the general identity
 $ \theta(z - \tfrac{1}{2}) = 2\theta(4z) - \theta(z)$,
which can be seen by comparing Fourier expansions.

Similarly, we compute that
\begin{equation}
  \Psi\big|_{[\sigma_{1/2}]}
  = 2^{\frac{1}{4}} \big( \tfrac{1+i}{2} \theta(z) - i \theta(4z) \big),
\end{equation}
%
where we have used the general identity $\theta(z - \tfrac{1}{4}) =
(1+i)\theta(4z) - i \theta(z)$.

Combining these together, it follows that
\begin{equation}
  V_1(\sigma_{1/2} \, z) = O(\sqrt y e^{-\pi y}).
\end{equation}
We note that the exponential decay comes from $\theta |_{[\sigma_{1/2}]}$,
whose constant Fourier coefficient vanishes.

The behavior at the cusp $\frac{1}{4}$ is similar. As $\theta$ is a modular form
on $\Gamma_0(4)$, we have that $\theta |_{[\sigma_{1/4}]} = \theta$. Analogous
computations to those with $\theta |_{[\sigma_{1/2}]}$ show that
\begin{align}
  \Psi\big|_{[\sigma_{1/4}]}
  &= (4z + 1)^{-\frac{1}{2}} \Psi\Big( \frac{z}{4z + 1} \Big)
  = (4z + 1)^{-\frac{1}{2}} \theta\Big( \frac{2z}{4z + 1} \Big)
  \\
  &= \theta(\tfrac{z}{2}) - \theta(2z),
\end{align}
which implies that
\begin{equation}
  V_1(\sigma_{1/4} \, z) = O(\sqrt y e^{- \pi y}).
\end{equation}
In this case, the exponential decay comes from $\Psi |_{[\sigma_{1/4}]}$, whose
constant Fourier coefficient vanishes.

\subsection{Constructing $V$}\label{ssec:constructing_v}

Now that we have established that $V_1$ decays exponentially at the $1/2$ and
$1/4$ cusps, and that $V_1$ and $V_1(\sigma_0z)$ grow like $\sqrt y + O(e^{-\pi
y})$, we construct a function $V$ from $V_1$ that decays exponentially at every
cusp.

Let $E(z, s; \chi)$ denote the Eisenstein
series~\eqref{eq:eisenstein_definition}. Define
\begin{equation}\label{eq:V_definition}
  V(z)
  := \sqrt y \theta(2z) \overline{\theta(z)} - E(z, \tfrac{1}{2}; \chi)
  = V_1(z) - E(z, \tfrac{1}{2}; \chi).
\end{equation}
The remainder of this section proves the following proposition.

\begin{proposition}
  The function $V(z)$ lies in $L^2(\Gamma_0(8) \backslash \mathcal{H}; \chi)$.
\end{proposition}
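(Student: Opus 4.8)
The plan is to verify the two conditions defining membership in $L^2(\Gamma_0(8)\backslash\mathcal H;\chi)$ separately: that $V$ is automorphic of weight $0$, level $8$, and character $\chi$, and that $|V|^2$ is integrable against the hyperbolic measure $d\mu = y^{-2}\,dx\,dy$ over a fundamental domain. The first condition is immediate. We have already seen that $V_1 = \sqrt y\,\Psi\overline\theta$ transforms with weight $0$ and character $\chi$ via \eqref{eq:psi_transformation_law} together with the conjugate $\theta$-multiplier, while $E(z,\tfrac12;\chi)$ is automorphic of the same weight, level, and character by its construction in \eqref{eq:eisenstein_definition}. Hence $V = V_1 - E(\cdot,\tfrac12;\chi)$ is automorphic of weight $0$, level $8$, and character $\chi$, and since it has weight $0$ the slash by $\sigma_{\mathfrak a}$ is just $V(\sigma_{\mathfrak a}z)$ with no automorphy factor.

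For the integrability condition, I would reduce to the behavior at the four cusps. Writing a fundamental domain as a compact core together with cuspidal neighborhoods for each cusp $\mathfrak a \in \{\infty,0,\tfrac12,\tfrac14\}$, on the compact core $V$ is continuous and hence bounded and square-integrable. In a neighborhood of $\mathfrak a$, applying $\sigma_{\mathfrak a}$ turns the local contribution into $\int_{Y_0}^\infty\!\!\int_0^1 |V(\sigma_{\mathfrak a}z)|^2\,y^{-2}\,dx\,dy$, and since $\int^\infty y^{2\alpha-2}\,dy$ converges exactly when $\alpha < \tfrac12$, it suffices to show $V(\sigma_{\mathfrak a}z) = o(y^{1/2})$ as $y\to\infty$ at every cusp. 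In fact we will obtain exponential decay once the borderline $y^{1/2}$ terms cancel.

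The heart of the argument, and the step I expect to be the main obstacle, is matching the $y^{1/2}$ growth of $V_1$ against the constant term of the Eisenstein series at each cusp. From Section~\ref{ssec:cusps} we know $V_1(\sigma_{\mathfrak a}z)$ has leading term $y^{1/2}$ with coefficient $1$ at $\infty$ and at $0$, decays exponentially at $\tfrac12$ and $\tfrac14$, and has exponentially decaying nonconstant Fourier modes everywhere. I would then compute the zeroth Fourier coefficient of $E(z,s;\chi)$ at each cusp, which has the shape $\delta_{\mathfrak a\infty}\,y^s + \varphi_{\mathfrak a}(s)\,y^{1-s}$, where the scattering coefficients $\varphi_{\mathfrak a}(s)$ are explicit in terms of Dirichlet $L$-functions and Gauss sums attached to $\chi(d)=\big(\tfrac2d\big)$. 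Because $\chi$ is nontrivial, $E(z,s;\chi)$ is holomorphic at $s=\tfrac12$; there both exponents degenerate to $y^{1/2}$, so the constant term at $\mathfrak a$ is $\big(\delta_{\mathfrak a\infty}+\varphi_{\mathfrak a}(\tfrac12)\big)y^{1/2}$ with no $\log y$ term. The crux is to verify that these coefficients equal $1,1,0,0$ at $\infty,0,\tfrac12,\tfrac14$ respectively, i.e. that $\varphi_{\infty}(\tfrac12)=0$, $\varphi_{0}(\tfrac12)=1$, and $\varphi_{1/2}(\tfrac12)=\varphi_{1/4}(\tfrac12)=0$, so that the $y^{1/2}$ contributions of $V_1$ and $E$ cancel at every cusp.

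Granting this matching of constant terms, the proof concludes quickly. At each cusp the difference $V(\sigma_{\mathfrak a}z)$ has vanishing $y^{1/2}$ coefficient, while the surviving Fourier modes — those of $V_1$ and the nonconstant modes of $E(\cdot,\tfrac12;\chi)$, the latter governed by $K$-Bessel factors — all decay exponentially. Hence $V(\sigma_{\mathfrak a}z)$ decays exponentially as $y\to\infty$ at every cusp, so by the reduction above $|V|^2$ is integrable against $d\mu$. Combined with the automorphy established first, this gives $V \in L^2(\Gamma_0(8)\backslash\mathcal H;\chi)$.
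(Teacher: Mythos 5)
Your overall strategy is the same as the paper's: take the cusp expansions of $V_1$ from Section~\ref{ssec:cusps} as given, compare them against the constant term of $E(z,\tfrac12;\chi)$ at each of the four cusps, and conclude that $V$ decays exponentially at every cusp, hence is square-integrable. The values you claim for the scattering coefficients, namely $\varphi_\infty(\tfrac12)=0$, $\varphi_0(\tfrac12)=1$, and $\varphi_{1/2}(\tfrac12)=\varphi_{1/4}(\tfrac12)=0$, are all correct.

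The genuine gap is that you never verify these values: you label this step ``the crux'' and then proceed by ``granting this matching,'' but this verification \emph{is} the proof --- everything else (automorphy, reduction to cuspidal neighborhoods, exponential decay of the nonconstant Fourier modes) is routine. Each of the three verifications requires its own argument. At $\infty$, the point is that the Gauss-type sum $\sum_r e(hr/8c)\big(\tfrac{2}{r}\big)$ vanishes when $h=0$, so the constant term is exactly $y^s$ with no $y^{1-s}$ piece at all (Lemma~\ref{lem:eisenstein_fourier_expansion}). At the $0$ cusp, the coefficient of $y^{1-s}$ is $\sqrt{\pi}\,\Gamma(s-\tfrac12)L(2s-1,\chi)\big/\big(8^s\Gamma(s)L(2s,\chi)\big)$, which at $s=\tfrac12$ is an indeterminate form --- the pole of $\Gamma(s-\tfrac12)$ against the zero $L(0,\chi)=0$ --- and its limit equals exactly $1$ only because of the functional equation $\Lambda(s,\chi)=\Lambda(1-s,\chi)$, as in~\eqref{eq:E0_constant_fourier_coeff}; had this limit been any constant other than $1$, the $y^{1/2}$ terms would not cancel and $V$ would \emph{fail} to lie in $L^2$, so this is a delicate identity rather than a formality. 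At the cusps $\tfrac12$ and $\tfrac14$, the paper's argument is the observation that $\chi(d-4c)=-\chi(d)$ for odd $c$ forces $E(\sigma_{\mathfrak a}(z+1),s;\chi)=-E(\sigma_{\mathfrak a}z,s;\chi)$, whence the constant Fourier coefficient vanishes identically. Without arguments of this kind your proposal asserts, rather than proves, the proposition. (A minor additional point: requiring only $V(\sigma_{\mathfrak a}z)=o(y^{1/2})$ is not sufficient for square-integrability --- for instance $y^{1/2}/\sqrt{\log y}$ satisfies it but is not square-integrable against $y^{-2}\,dx\,dy$ --- though this is harmless here since the cancellation actually yields exponential decay.)
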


To prove this, we study the growth of $E(z, \tfrac{1}{2}; \chi)$ at each
cusp.  This information may be read from the constant terms in the Fourier
expansions of $E(z,\tfrac{1}{2};\chi)$ at each cusp.

For the cusp $\infty$, we elect to compute the full Fourier expansion
of $E(z,s;\chi)$ so as to avoid duplicate work in a later section.
This expansion is presented in the following lemma.

\begin{lemma} \label{lem:eisenstein_fourier_expansion}
The Fourier expansion of $E(z,s;\chi)$ is
\[E(z,s;\chi) = y^s + \sum_{h \neq 0} \rho_{y,s}(h) e^{2\pi i h x},\]
in which the coefficients $\rho_{y,s}(h)$ are defined by
\begin{equation} \label{eq:eisenstein_coefficient}
    \rho_{y, s}(h) = \frac{%
      \pi^s y^{\frac{1}{2}} \lvert h \rvert^{\frac{1}{2} - s}
      \sigma_{2s - 1}^\chi(h)
    }{%
      2^{6s - \frac{5}{2}} \Gamma(s) L(2s, \chi)
    }
    K_{s - \frac{1}{2}} (2 \pi \lvert h \rvert y).
  \end{equation}
\end{lemma}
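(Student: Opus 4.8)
The plan is to compute the Fourier expansion directly from the explicit double sum for $E(z,s;\chi)$ given in~\eqref{eq:eisenstein_definition}, by the standard unfolding-and-Poisson-summation method. The identity coset ($c=0$) contributes $y^s$, and I claim this is the entire constant term. For the remaining sum over $c>0$, I would fix $c$ and decompose the sum over $d$ coprime to $8c$ into arithmetic progressions modulo $8c$: writing $d = a + 8cn$ with $a$ running over residues coprime to $8c$ and $n \in \mathbb{Z}$, the character $\big(\frac{2}{d}\big) = \big(\frac{2}{a}\big)$ is constant on each progression, since $\big(\frac{2}{\cdot}\big)$ has conductor dividing $8 \mid 8c$. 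Factoring $8cz + d = 8c\big((x + \tfrac{a}{8c}) + n + iy\big)$, the inner sum over $n$ becomes $(8c)^{-2s}\sum_{n}\big((\alpha+n)^2 + y^2\big)^{-s}$ with $\alpha = x + \tfrac{a}{8c}$, to which Poisson summation applies.

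Poisson summation replaces the sum over $n$ by a sum over frequencies $k \in \mathbb{Z}$ weighted by $\widehat f(k)\,e(k\alpha)$, where $f(t) = (t^2+y^2)^{-s}$. The required transform is the standard evaluation
\[
  \widehat f(k) = \int_{-\infty}^{\infty} \frac{e(-kt)}{(t^2+y^2)^s}\,dt
  = \frac{2\pi^s |k|^{s - \frac12}\, y^{\frac12 - s}}{\Gamma(s)}\,
    K_{s - \frac12}\big(2\pi |k| y\big) \qquad (k \neq 0),
\]
with $\widehat f(0) = \sqrt{\pi}\,\Gamma(s-\tfrac12)\,\Gamma(s)^{-1}\, y^{1-2s}$ (both obtained by rescaling $t = yu$ from the classical $(1+u^2)^{-s}$ transform). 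Collecting the dependence on $a$ produces the Gauss sum
\[
  G_c(k) = \sum_{\substack{a \bmod 8c \\ (a,8c)=1}} \Big(\frac{2}{a}\Big)\, e\Big(\frac{ak}{8c}\Big),
\]
so that the $h$-th Fourier coefficient ($h \neq 0$) equals $y^s\,\widehat f(h)\sum_{c>0}(8c)^{-2s}G_c(h)$. The factor $y^s y^{\frac12 - s} = y^{\frac12}$ already matches the $y^{1/2}$ appearing in $\rho_{y,s}(h)$.

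For the constant term, the $k=0$ contribution carries the factor $G_c(0) = \sum_{a\bmod 8c,\,(a,8c)=1}\big(\frac{2}{a}\big)$. Since $a \mapsto \big(\frac{2}{a}\big)$ factors through the surjection $(\mathbb{Z}/8c\mathbb{Z})^\times \to (\mathbb{Z}/8\mathbb{Z})^\times$ followed by the nontrivial character $\big(\frac{2}{\cdot}\big)$ of $(\mathbb{Z}/8\mathbb{Z})^\times$, it is a nontrivial character of $(\mathbb{Z}/8c\mathbb{Z})^\times$ and hence sums to zero for every $c$. Thus no $y^{1-2s}$ term survives, and the full constant term is exactly $y^s$, as stated.

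The main obstacle is the arithmetic evaluation of $\sum_{c>0} c^{-2s} G_c(h)$ and the reconciliation of all constants. I would evaluate $G_c(h)$ by multiplicativity, separating the prime $2$ from the odd part: at odd primes the sums assemble, via the usual Ramanujan-sum-to-divisor-function manipulation, into $\sigma_{2s-1}^\chi(h)/L(2s,\chi)$, while at $2$ one uses the primitive conductor-$8$ Gauss sum, whose magnitude $\sqrt{8} = 2^{3/2}$ supplies the residual constant. The target is the identity
\[
  \sum_{c>0} \frac{G_c(h)}{c^{2s}}
  = 2^{\frac{3}{2}}\, |h|^{1-2s}\,\frac{\sigma_{2s-1}^\chi(h)}{L(2s,\chi)},
\]
which, combined with the prefactor $8^{-2s} = 2^{-6s}$ and the $2\pi^s/\Gamma(s)$ from $\widehat f(h)$, produces the denominator $2^{6s - 5/2}\,\Gamma(s)\,L(2s,\chi)$; the factor $|h|^{1-2s}$ converts the $|h|^{s-1/2}$ from $\widehat f(h)$ into the claimed $|h|^{1/2-s}$. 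Tracking these powers of $2$ correctly through the imprimitive Gauss sum at the ramified prime is the delicate part, and is where I expect the bulk of the bookkeeping to lie.
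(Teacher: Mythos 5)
Your proposal is correct in its claims and would work, but it organizes the arithmetic genuinely differently from the paper, and the difference is exactly where your deferred ``bookkeeping'' lives. The paper multiplies $E(z,s;\chi)$ by $L(2s,\chi)$ \emph{before} computing Fourier coefficients: since $\chi$ is supported on odd integers, this factor absorbs the coprimality condition $(d,8c)=1$, and the residue sum that appears is the \emph{complete} sum $S_c(h):=\sum_{r \bmod 8c}\big(\tfrac{2}{r}\big)e\big(\tfrac{hr}{8c}\big)$; writing $r=8r'+q$ it splits as a full exponential sum over $r' \bmod c$ (vanishing unless $c\mid h$) times the primitive Gauss sum mod $8$, giving the one-line evaluation $S_c(h)=2\sqrt{2}\,c\,\big(\tfrac{2}{h/c}\big)$ for $c\mid h$ and $0$ otherwise, with the $1/L(2s,\chi)$ appearing at the end by division. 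You instead keep the coprimality condition, so Poisson summation hands you the incomplete sums $G_c(h)$ over $(\mathbb{Z}/8c\mathbb{Z})^\times$, and you must evaluate $\sum_{c>0}G_c(h)c^{-2s}$; your target identity $\sum_{c>0}G_c(h)c^{-2s}=2^{3/2}|h|^{1-2s}\sigma^{\chi}_{2s-1}(h)/L(2s,\chi)$ is exactly right, as are your Bessel transforms and your argument that the constant term is $y^s$ (the lift of $\big(\tfrac{2}{\cdot}\big)$ to $(\mathbb{Z}/8c\mathbb{Z})^\times$ is nontrivial, so $G_c(0)=0$). The part you leave undone — the prime-by-prime analysis, especially the imprimitive Gauss sums at the ramified prime $2$ — is real work, but it can be bypassed entirely: M\"obius inversion gives $G_c(h)=\sum_{e\mid c}\mu(e)\chi(e)S_{c/e}(h)$, whence
\begin{equation}
  \sum_{c>0}\frac{G_c(h)}{c^{2s}}
  = \frac{1}{L(2s,\chi)}\sum_{c'>0}\frac{S_{c'}(h)}{c'^{2s}}
  = \frac{2^{3/2}\,|h|^{1-2s}\,\sigma^{\chi}_{2s-1}(h)}{L(2s,\chi)},
\end{equation}
recovering your target from the paper's single Gauss-sum evaluation. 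In short, the two proofs differ by where the factor $L(2s,\chi)$ is introduced; the paper's choice of clearing it first (or, equivalently in your framework, reducing to complete sums via M\"obius) is precisely what eliminates the $2$-adic casework you anticipate, and with that substitution your outline becomes a complete proof.
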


\begin{proof}
Let $\delta_{ij}$ denote the Kronecker delta function.
Beginning from~\eqref{eq:eisenstein_definition}, we directly evaluate
\begin{align}
  &\int_0^1 L(2s, \chi) E(z, s; \chi) e^{- 2 \pi i h x} dx
  \\
  &\quad= L(2s,\chi)y^s \delta_{h0} + \sum_{c > 0} \frac{1}{(8c)^{2s}}
  \sum_{r = 0}^{8c-1} \sum_{m \in \mathbb{Z}} \int_0^1
  \frac{%
    y^s (\frac{2}{r}) e^{-2 \pi i h x}
  }{%
    \lvert z + m + \frac{r}{8c} \rvert^{2s}
  } dx
  \\
  &\quad= L(2s,\chi)y^s \delta_{h0} + \frac{1}{2^{6s}} \sum_{c > 0} \frac{1}{c^{2s}}
  \sum_{r = 0}^{8c-1} e\Big( \frac{hr}{8c} \Big) \Big( \frac{2}{r} \Big)
  \int_{-\infty}^\infty
  \frac{%
    y^s e^{- 2 \pi i h x}
  }{%
    (x^2 + y^2)^s
  } dx.
\end{align}
When $h=0$, the $r$-sum vanishes and only the $L(2s,\chi)y^s$ term survives. Otherwise,
we evaluate the integral as in~\cite[3.1.9]{Goldfeld06}. Writing $r = 8r' +
q$ with $0 \leq q < 8$ and $0 \leq r' < c$ transforms the sum over $r$ into the
product of an exponential sum and a Gauss sum. It ultimately follows that
\begin{equation}
  \sum_{r = 1}^{8c} e\Big( \frac{hr}{8c} \Big) \Big( \frac{2}{r} \Big)
  =
  \begin{cases}
    2\sqrt 2 c\, \big( \frac{2}{h/c} \big) & \text{if } c \mid h, \\
    0 & \text{otherwise}.
  \end{cases}
\end{equation}
Simplification and analytic continuation completes the proof.
\end{proof}

We note in particular that $E(\sigma_\infty z, s;\chi) = y^s + O(e^{-2\pi y})$, and
hence $V(z)$ vanishes at the cusp at $\infty$.

For the cusp at $0$, we compute that
\begin{equation}
  E(\sigma_0 \, z, s; \chi)
  =
  \sum_{\gamma \in \Gamma_\infty \backslash \Gamma_0(8)}
  \overline{\chi(\gamma)} \Im(\gamma(-1/8z))^s
  =
  \sum_{d > 0} \sum_{\substack{c \in \mathbb{Z} \\ (8c, d) = 1}}
  \frac{(y/8)^s (\frac{2}{d})}{\lvert dz - c \rvert^{2s}}.
\end{equation}
Thus the constant term in the Fourier expansion of $L(2s, \chi) E(\sigma_0\,z,
s; \chi)$ is
\begin{align}
  &\int_0^1 L(2s, \chi) E(\sigma_0 \, z, s; \chi) dx
  \\
  &\quad = \frac{1}{8^s} \sum_{d > 0} \frac{(\frac{2}{d})}{d^{2s}}
  \sum_{r = 1}^d \sum_{m \in \mathbb{Z}}
  \int_0^1 \frac{y^s}{\lvert z -m - \frac{r}{d}  \rvert^{2s}} \, dx
  \\
  &\quad = \frac{1}{8^s} \sum_{d > 0} \frac{(\frac{2}{d})}{d^{2s-1}}
  \int_{-\infty}^\infty \frac{y^s}{(x^2 + y^2)^s} dx
  =
  \frac{\sqrt{\pi} \Gamma(s - \frac{1}{2})}{8^s \Gamma(s)}
  L(2s - 1, \chi) y^{1 - s}.
\end{align}
It follows that the constant term in the Fourier expansion of $E(\sigma_0\, z,
\frac{1}{2}; \chi)$ is
\begin{equation}\label{eq:E0_constant_fourier_coeff}
  \lim_{s \to \frac{1}{2}}
  \frac{y^{1 - s} \sqrt{\pi} \Gamma(s - \frac{1}{2}) L(2s - 1, \chi)}
       {8^s \Gamma(s) L(2s, \chi)}
  = \sqrt{y},
\end{equation}
in which we have used the functional equation
\begin{equation}
  \Lambda(s, \chi)
  := (\pi/8)^{-\frac{s}{2}} \Gamma(\tfrac{s}{2}) L(s, \chi)
  = \Lambda(1 - s, \chi)
\end{equation}
to compute the limit. Thus $V_1$ cancels with $E(z, \tfrac{1}{2}; \chi)$ at the
$0$ cusp and $V$ vanishes there.

As $V_1$ vanishes at the $1/2$ and $1/4$ cusps, it remains only to show that
$E(z, \tfrac{1}{2}; \chi)$ vanishes there as well. For $1/4$, we have that
\begin{equation}
  E(\sigma_{1/4}\,z, s; \chi)
  =
  \sum_{\substack{c > 0 \\ 2 \nmid c}}
  \sum_{\substack{d \in \mathbb{Z} \\ (4c, d) = 1}}
  \frac{y^s (\frac{2}{d})}{\lvert 4cz + d \rvert^{2s}}.
\end{equation}
As $\chi(d - 4c) = -\chi(d)$ for odd $c$, we find $E(\sigma_{1/4}\,
(z+1), s; \chi) = - E(\sigma_{1/4}\, z, s; \chi)$. Thus the constant Fourier
term in $E(\sigma_{1/4}\, z, s; \chi)$ must vanish. Similarly,
\begin{equation}
  E(\sigma_{1/2}\, z, s; \chi)
  =
  \sum_{c > 0} \sum_{\substack{(4c, d) = 1 \\ d \equiv c \bmod 4}}
  \frac{(2y)^s  (\frac{2}{d})}{\lvert 4cz + d \rvert^{2s}},
\end{equation}
and one can check that $E(\sigma_{1/2}\,(z+1), s; \chi) = -E(\sigma_{1/2}\,z, s;
\chi)$. The constant Fourier term in $E(\sigma_{1/4}\, z, s; \chi)$
vanishes.

We conclude the $V(z)$ lies in $L^2(\Gamma_0(8) \backslash \mathcal{H}; \chi)$
as claimed.

\subsection{Constructing the single Dirichlet series}

We now construct and study $D_h(s)$ from~\eqref{eq:single_ds_def}. To do so, we examine the inner
product
\begin{equation}
  \langle V(z), P_h(z, \overline{s}; \chi) \rangle,
\end{equation}
where $V$ is as in~\eqref{eq:V_definition} and $P_h$ is the Poincar\'e
series~\eqref{eq:poincare_definition}. We will see that this inner product
encodes the single Dirichlet series $D_h(s)$.

\begin{proposition}\label{prop:single_ds_basic}
  For $h \geq 1$ and $\Re s \gg 1$, we have that
  \begin{equation}
	D_h(s)
    =
    \frac{%
      (8 \pi)^{s}
      \langle V, P_h(\cdot, \overline{s}+\tfrac{1}{2}; \chi)\rangle
    }{\Gamma(s)}
    +
    \frac{%
      2^{s}\sqrt{\pi} \sigma_0^\chi(h) \Gamma(s)
    }{\log(1 + \sqrt 2) h^{s} \Gamma(s+\frac{1}{2})},
  \end{equation}
  where
  \begin{equation}
    \sigma_{w}^\chi(h) = \sum_{d \mid h} \chi(d) d^w
  \end{equation}
  is a twisted divisor sum.
\end{proposition}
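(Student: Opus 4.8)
The plan is to compute the inner product by unfolding the Poincar\'e series, reducing each pairing to a Mellin transform of a single Fourier coefficient. Write $V = V_1 - E(\cdot, \tfrac12; \chi)$ as in~\eqref{eq:V_definition}, so that
\[
  \langle V, P_h(\cdot, \bar s + \tfrac12; \chi)\rangle
  = \langle V_1, P_h(\cdot, \bar s + \tfrac12; \chi)\rangle
  - \langle E(\cdot,\tfrac12;\chi), P_h(\cdot, \bar s + \tfrac12; \chi)\rangle.
\]
Although $E(\cdot, \tfrac12; \chi)$ is not square-integrable, both pairings converge for $\Re s \gg 1$: the series $P_h$ converges absolutely once $\Re(s + \tfrac12) > 1$ and, being assembled from $e(h\gamma z)$ with $h \geq 1$, decays rapidly at every cusp, so it may legitimately be paired against the slowly growing $V_1$ and $E$ directly.

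First I would unfold. Since $\chi$ is a real character and both $V_1$ and $E(\cdot,\tfrac12;\chi)$ transform under $\Gamma_0(8)$ with character $\chi$, the standard unfolding against the Poincar\'e series collapses each pairing to an integral over the strip $\Gamma_\infty\backslash\mathcal H = \{0 \leq x < 1,\ y > 0\}$. Conjugating~\eqref{eq:poincare_definition} and using $\overline{\chi(\gamma)} = \chi(\gamma)$, one obtains, for $F \in \{V_1,\, E(\cdot,\tfrac12;\chi)\}$,
\[
  \langle F, P_h(\cdot, \bar s + \tfrac12; \chi)\rangle
  = \int_0^\infty\!\!\int_0^1 F(z)\, y^{s - \frac32}\, e^{-2\pi i h x}\, e^{-2\pi h y}\, dx\, dy,
\]
in which the $x$-integral extracts the $h$-th Fourier coefficient of $F$.

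For $F = V_1$, expanding $\theta(2z)\overline{\theta(z)}$ against the Fourier series of each factor shows that the $h$-th Fourier coefficient equals $\sqrt y \sum_{n} r_1(n) r_1(2n - h)\, e^{-2\pi(4n - h)y}$. After multiplying by $e^{-2\pi h y}$ the exponent collapses to $e^{-8\pi n y}$, and the remaining integral is the Gamma integral $\int_0^\infty y^{s-1} e^{-8\pi n y}\, dy = \Gamma(s)(8\pi n)^{-s}$. Summing over $n$ yields $\langle V_1, P_h(\cdot,\bar s + \tfrac12;\chi)\rangle = \Gamma(s)(8\pi)^{-s} D_h(s)$, which, after multiplication by $(8\pi)^s/\Gamma(s)$, produces exactly the leading $D_h(s)$ and hence the first term of the proposition.

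For $F = E(\cdot,\tfrac12;\chi)$, Lemma~\ref{lem:eisenstein_fourier_expansion} gives the $h$-th Fourier coefficient at $s = \tfrac12$ as $\rho_{y,1/2}(h) = \tfrac{\sqrt y\,\sigma_0^\chi(h)}{\sqrt2\, L(1,\chi)} K_0(2\pi h y)$, using $\Gamma(\tfrac12) = \sqrt\pi$. The surviving $y$-integral is then the Laplace--Bessel integral $\int_0^\infty y^{s-1} e^{-2\pi h y} K_0(2\pi h y)\, dy$; its hypergeometric factor is evaluated at $\frac{p-c}{p+c} = 0$ (here $p = c = 2\pi h$), where ${}_2F_1$ degenerates to $1$, giving $\frac{\sqrt\pi\,\Gamma(s)^2}{(4\pi h)^s\,\Gamma(s+\frac12)}$. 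Finally, the analytic class number formula for $\mathbb Q(\sqrt2)$ furnishes $L(1,\chi) = \log(1+\sqrt2)/\sqrt2$; substituting this and multiplying by $(8\pi)^s/\Gamma(s)$ (noting $(8\pi)^s/(4\pi)^s = 2^s$) yields precisely the second term. Combining the two pairings completes the proof. I expect the main obstacle to be the Eisenstein pairing: one must justify pairing $P_h$ against the non-$L^2$ form $E(\cdot,\tfrac12;\chi)$ (resolved by the rapid decay of $P_h$), carry out the Laplace--Bessel integral, and identify $L(1,\chi)$ via the class number formula. The $V_1$ pairing, by contrast, is a routine unfolding and Gamma integral.
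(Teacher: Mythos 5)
Your proposal is correct and follows essentially the same route as the paper's proof: split $V = V_1 - E(\cdot,\tfrac12;\chi)$, unfold each pairing against the Poincar\'e series, recover $D_h(s)$ from the $V_1$ piece via a Gamma integral, and evaluate the Eisenstein piece using the Fourier coefficient from Lemma~\ref{lem:eisenstein_fourier_expansion}, the Laplace--Bessel integral of~\cite[6.621(3)]{GradshteynRyzhik07}, and the class number formula (the paper merely carries a general $w$ and shifts $s \mapsto s + \tfrac12$ at the end, which is cosmetic). One small imprecision: $P_h$ does not decay rapidly at the cusps---its zeroth Fourier coefficients there decay only like $y^{1-\Re s}$---but this polynomial decay still dominates the $\sqrt{y}$ growth of $V_1$ and $E(\cdot,\tfrac12;\chi)$, so your convergence claim for $\Re s \gg 1$ stands.
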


\begin{proof}

We evaluate $\langle V, P_h \rangle$ explicitly. As $P_h \in L^2(\Gamma_0(8)
\backslash \mathcal{H}; \chi)$ for sufficiently large $\Re(s)$, we can consider the inner products against the
two parts of $V = V_1 - E$ separately. The inner product against $V_1 = \Im(\cdot)^{1/2} \Psi
\overline{\theta}$ yields the Dirichlet series, as is seen through the classical
unfolding argument:
\begin{align}
  &\langle
    \Im(z)^{\frac{1}{2}} \theta(2z) \overline{\theta(z)},
    P_h(z, \overline{s}; \chi)
  \rangle
  =
  \int_0^\infty \int_0^1 y^{s - \frac{1}{2}}
  \theta(2z)\overline{\theta(z)e(hz)}\, \frac{dx \, dy}{y}
  \\
  &\quad=
  \sum_{m_1, m_2} r_1(m_1) r_1(m_2) \int_0^\infty \int_0^1 y^{s - \frac{1}{2}}
  e(2m_1 z - m_2 \overline{z} - h\overline{z}) \, \frac{dx \, dy}{y}
  \\
  &\quad=
  \sum_{m=1}^\infty r_1(m)r_1(2m - h)
  \int_0^\infty y^{s - \frac{1}{2}} e^{-8 \pi m y} \frac{dy}{y}
  \\
  &\quad=
  \frac{\Gamma(s - \frac{1}{2})}{(8 \pi)^{s - \frac{1}{2}}}
  \sum_{m \geq 1} \frac{r_1(m) r_1(2m - h)}{m^{s - \frac{1}{2}}}.
\end{align}

The inner product of $P_h$ against the Eisenstein series essentially extracts the \mbox{$h$-th}
Fourier coefficient of the Eisenstein series, $\rho_{y, w}(h)$. A short
computation shows that
\begin{align}
  \langle E(z, w; \chi), P_h(z, \overline{s}; \chi ) \rangle
  =
  \int_0^\infty \rho_{y, w}(h) e^{- 2 \pi y h} y^{s - 1} \frac{dy}{y}.
\end{align}
A formula for the Fourier coefficient $\rho_{y,w}(h)$ appears in~\eqref{eq:eisenstein_coefficient}. Applying that identity and changing variables to simplify the integral, we rewrite the inner product as
\begin{equation}
  \frac{%
    \pi^w h^{\frac{1}{2} - w} \sigma_{2w - 1}^\chi(h)
  }{%
    2^{6w - \frac{5}{2}} \Gamma(w) L(2w, \chi)
  }
  \frac{1}{(2 \pi h)^{s - \frac{1}{2}}}
  \int_0^\infty
  K_{w - \frac{1}{2}}(y) e^{-y} y^{s - \frac{1}{2}} \frac{dy}{y}.
\end{equation}
The integral above appears in the integral table~\cite[6.621(3)]{GradshteynRyzhik07}. Applying the integral from the table and evaluating at $w = \frac{1}{2}$, we see that
\begin{equation}
  \langle E(z, \tfrac{1}{2}; \chi), P_h(z, \overline{s}; \chi ) \rangle
  =
  \frac{%
    \pi^{1 - s} \sigma_0^\chi(h) \Gamma(s - \frac{1}{2})^2
  }{%
    2^{2s - \frac{1}{2}} h^{s - \frac{1}{2}} L(1, \chi) \Gamma(s).
  }
\end{equation}
The class number formula gives $L(1, \chi) = \log(1 +
\sqrt{2})/\sqrt{2}$. After rearranging and shifting $s \mapsto s+\frac{1}{2}$, we complete the proof.
\end{proof}

\section{Spectral Expansion}\label{sec:spectral_expansion}

We now produce a spectral expansion for $D_h(s)$.
To do this, we provide a spectral expansion for $\langle V, P_h \rangle$
by spectrally expanding the Poincar\'e series $P_h$. This approach to
constructing and studying Dirichlet series is not new, and is now
well-understood. See for instance the appendix to~\cite{sarnak2001estimates},
the work of Hoffstein and Hulse~\cite{HoffsteinHulse13}, or previous work of the
authors~\cite{HulseGaussSphere}. But in contrast to these previous works, the
behavior of this spectral expansion is distinguished. We will see that the
continuous component of the spectrum vanishes and the discrete component
consists entirely of explicit dihedral Maass forms. Together, these allow for an
unusually descriptive understanding of the spectral behavior.

We remark that the relative thinness of the spectral support of $V$ is prefigured
by similar results of Nelson~\cite{Nelson21} on the spectral decomposition
of $y^{1/2}\vert \theta(z) \vert^2$. (See also~\cite[\S{1}]{Nelson19}.)
There, the discrete spectrum vanishes in entirety owing to the non-existence
of dihedral forms with trivial character.

As is summarized in \cite[\S 2.1.2.1]{michel2007}, forms in $L^2(\Gamma_0(8)
\backslash \mathcal{H}; \chi)$ can be decomposed as a spectral expansion over Maass forms and Eisenstein series. In particular, since $P_h(z,s;\chi) \in L^2(\Gamma_0(8)
\backslash \mathcal{H}; \chi)$ for sufficiently large $\Re(s)$, it has a spectral expansion of the form
\begin{align}
  P_h(z, s; \chi)
  = &\sum_j \langle P_h(\cdot, s; \chi), \mu_j \rangle \mu_j(z) \\
  &+ \sum_{\mathfrak{a}} \frac{1}{4\pi} \int_{-\infty}^\infty
  \langle
    P_h(\cdot, s; \chi), E_\mathfrak{a}(\cdot, \tfrac{1}{2} + it; \chi)
  \rangle
  E_\mathfrak{a}(z, \tfrac{1}{2} + it; \chi) dt.
\end{align}
Here, $\{\mu_j\}$ is an orthonormal basis of Hecke-Maass forms for
$L^2(\Gamma_0(8) \backslash \mathcal{H}; \chi)$, where $\mu_j$ has eigenvalue
$\frac{1}{4} + t_j^2$ and type $\frac{1}{2} + it_j$. The sum over
$\mathfrak{a}$ ranges over the cusps of $\Gamma_0(8)$ that are non-singular
with respect to $\chi$, which are the cusps at $0$ and $\infty$.

Inserting this into the inner product $\langle V, P_h \rangle$ yields the
spectral expansion
\begin{align}
  &\langle V(z), P_h(z, \overline{s}; \chi) \rangle
  = \sum_j \overline{\langle P_h(\cdot, \overline{s}; \chi), \mu_j \rangle}
  \langle V, \mu_j \rangle\label{eq:spectral_full}
  \\
  &\qquad \qquad + \sum_{\mathfrak{a}} \frac{1}{4\pi} \int_{-\infty}^\infty
  \overline{\langle
    P_h(\cdot, \overline{s}; \chi), E_\mathfrak{a}(\cdot, \tfrac{1}{2} + it; \chi)
  \rangle}
  \langle V, E_\mathfrak{a}(z, \tfrac{1}{2} + it; \chi)\rangle dt.
\end{align}
This expansion simplifies further, and to that end it is helpful that we take
a brief digression into the Maass forms of $L^2(\Gamma_0(8) \backslash \mathcal{H}, \chi)$.

\subsection{Maass forms and dihedral Maass forms.}
Each Maass form in this spectral expansion of \eqref{eq:spectral_full} has
a Fourier expansion of the form
\begin{equation}
  \mu_j(z)
  =
  \sqrt{y} \sum_{n \geq 0}
  \rho_j(n) K_{it_j}(2 \pi \lvert n \rvert y) e^{2 \pi i n x} \label{eq:maass},
\end{equation}
and associated simultaneous Hecke eigenvalues $\lambda_j(n)$.  These eigenvalues
satisfy the recurrence relation
\begin{equation}
  \lambda_j(p^{n+1})
  =
  \lambda_j(p)\lambda_j(p^n) - \chi(p) \lambda_j(p^{n-1})\label{eq:primepows}
\end{equation}
and are multiplicative, in that
$\lambda_j(mn)=\lambda_j(m)\lambda_j(n)$ for $(m,n)=1$.

We normalize each $\mu_j$ so that the basis $\{ \mu_j \}$ is orthonormal with respect to the Petersson inner product. Thus
for each Maass form there is a constant $\rho_j(1)$ such that $\rho_j(n) =
\rho_j(1) \lambda_j(n)$, and we may assume $\rho_j(1) \in \mathbb{R}$ without
loss of generality.

Several Maass forms in $L^2(\Gamma_0(8) \backslash \mathcal{H}, \chi)$ can be
described explicitly. These are Maass forms coming from Hecke characters
defined on ideals of $\mathbb{Q}(\sqrt 2)$ and are examples of \emph{dihedral}
Maass forms,  Maass forms whose $L$-functions are Hecke $L$-functions~\cite{LY02}.
Rather interestingly, these forms comprise the entirety of the spectral
expansion of $\langle V, P_h \rangle$.

For each $m \in \mathbb{Z}_{\neq 0}$, consider the function
\begin{equation} \label{eq:dihedral}
  f_m(z) = \!\sum_{n \geq 1} \sum_{N(\mathfrak{b})=n} \!\!\eta(\mathfrak{b})^m\!
  \sqrt{y} K_{\frac{i m \pi}{2 \log(1 + \sqrt{2})}}(2 \pi n y)
 (e(nx) +(-1)^m e(-nx))
\end{equation}
in which $\eta(\mathfrak{b})$ is the Hecke character defined on ideals of
$\mathbb{Q}(\sqrt 2)$ by
\begin{equation}
  \eta\big((a + b \sqrt 2)\big)
  =
  \sgn(a + b \sqrt 2) \sgn(a - b \sqrt 2)
  \Big\lvert
    \frac{a + b \sqrt{2}}{a - b \sqrt{2}}
  \Big\rvert^{\frac{i\pi}{2 \log(1 + \sqrt{2})}}.
\end{equation} We note that $f_m(z) = f_{-m}(z)$.
It suffices to define $\eta$ on principal ideals, as the ring of
integers in $\mathbb{Q}(\sqrt 2)$ is a principal ideal domain.
Following Maass, and as recounted in~\cite[Theorem 1.9.1]{Bump98}, the functions
$f_m(z)$ are indeed Maass cusp forms for $\Gamma_0(8)$ with nebentypus $\chi$ and
type $\frac{1}{2} + \frac{i m \pi}{2 \log(1 + \sqrt{2})}$. These forms have
multiplicative Hecke eigenvalues $\lambda_m(h)$ for which
\[
 f_m(z) = \!\sum_{n \neq 0} \lambda_m(n)
  \sqrt{y} K_{\frac{i m \pi}{2 \log(1 + \sqrt{2})}}(2 \pi |n| y)e(nx),
\] and these $\lambda_m(h)$ can be defined on
rational primes $p$ as
\begin{equation}\label{eq:dihedralcoeffs}
  \lambda_m(p) = \begin{cases}
  \eta^m(\mathfrak{p})+\eta^{-m}(\mathfrak{p})
   & \text{if } \chi(p) =1, \\
   0 & \text{if } \chi(p)=-1, \\
   (-1)^m & \text{if } p=2,
   \end{cases}
\end{equation}
where $\mathfrak{p}$ is a prime ideal in the integer ring of
$\mathbb{Q}(\sqrt{2})$ over $\mathbb{Z}$ such that $p$ splits as $(p) =
\mathfrak{p}\overline{\mathfrak{p}}$. We then define $\lambda_m$ for non-zero
integers via~\eqref{eq:primepows}, multiplicativity, and the relation
$\lambda_m(1) =(-1)^m\lambda_m(-1) = 1$.

In the case $m=0$, the function $f_m(z)$ does not define a cusp form.  Rather,
as noted in~\cite{Bump98}, $\frac{1}{2} \log(1+\sqrt{2})\sqrt{y} + f_0(z)$
defines an Eisenstein series on $\Gamma_0(8)$ with nebentypus $\chi$. A comparison
of Fourier coefficients confirms that this Eisenstein series is precisely
$\frac{1}{2} \log(1+\sqrt{2}) E(z,\frac{1}{2};\chi)$. In particular,
$\lambda_0(p) = \sigma_0^\chi(p)$ and the potential contribution of this missing
Maass form is exactly the contribution of the Eisenstein series in
Proposition~\ref{prop:single_ds_basic}.


The $L$-functions $L(s,f_m)$ attached to these Maass forms coincide with the Hecke
$L$-functions
\begin{equation}
L(s,\eta^{m}) = \sum_{\mathfrak{a}} \frac{\eta^{m}(\mathfrak{a})}{N(\mathfrak{a})^s}, \label{eq:heckeL}
\end{equation}
which are entire for $m \neq 0$. In the case $m=0$, $L(s,\eta^0) = \zeta(s)L(s,\chi)$ is the Dedekind zeta function for $\mathbb{Q}(\sqrt{2})$ and admits a simple pole at $s=1$.
By examining the conductors of the general functional equation for Hecke $L$-functions, as in~\cite{Bo05}, we have that the only dihedral Maass cusp forms of level 8 and nebentypus $\chi$ are exactly our $f_m$ described above.

We may now state the main theorem of this section.

\begin{theorem}\label{thm:single_ds_decomposition}
 For $h \geq 1$ and $\Re s \gg 1$, we have that
    \begin{align}\label{eq:in_thm_single_ds_decomposition}
    D_h(s) & = \sum_{m \geq 1} \frac{r_1(m) r_1(2m-h)}{m^s}
    \\
    &=  \sum_{m \in \mathbb{Z}} \frac{(-1)^m\lambda_m(h)}{h^s}
    \frac{\Gamma(s +\frac{im\pi}{2\log(1+\sqrt{2})})\Gamma(s-
    \frac{im\pi}{2\log(1+\sqrt{2})})}{2^{1-3s}\log(1+\sqrt{2})\Gamma(2s)},
\end{align}
  where $\lambda_m(h)$ are defined as in~\eqref{eq:dihedralcoeffs}.
\end{theorem}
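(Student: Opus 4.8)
The plan is to evaluate the spectral expansion \eqref{eq:spectral_full} term by term, show that only the dihedral forms $f_m$ survive, and then package the result through Proposition~\ref{prop:single_ds_basic}. That proposition already expresses $D_h(s)$ as $(8\pi)^{s}\Gamma(s)^{-1}\langle V, P_h(\cdot,\overline s+\tfrac12;\chi)\rangle$ plus an explicit Eisenstein term, so I would reserve the Eisenstein term to become the $m=0$ summand at the very end, and focus on the inner products in the spectral expansion of $\langle V, P_h\rangle$.

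First I would dispatch the continuous spectrum. By Rankin--Selberg unfolding, $\langle V, E_{\mathfrak{a}}(\cdot,\tfrac12+it;\chi)\rangle$ reduces to an integral of the constant Fourier coefficient of $V$ at the cusp $\mathfrak{a}$ against a power of $y$. The key point is that this constant term vanishes \emph{identically}. Writing $\theta(2z)=\sum_a e(2a^2z)$ and $\overline{\theta(z)}=\sum_b e(-b^2\overline z)$, the $x$-frequency of a term of $V_1$ is $2a^2-b^2$, so the constant term at $\infty$ receives contributions only from $(a,b)$ with $2a^2=b^2$, i.e.\ only from $a=b=0$ since $\sqrt2$ is irrational; this gives exactly $\sqrt y$, which cancels the constant term $\sqrt y$ of $E(z,\tfrac12;\chi)$ from \eqref{eq:E0_constant_fourier_coeff}. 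The same arithmetic handles the cusp $0$ via $V_1|_{[\sigma_0]}=\overline{V_1}$, while at $\tfrac12$ and $\tfrac14$ the relevant constant Fourier coefficients of the slashed theta functions already vanish and $E$ has no constant term. Hence every continuous-spectrum inner product is zero.

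Next comes the heart of the argument: computing $\langle V,\mu_j\rangle=\langle V_1,\mu_j\rangle$ (the Eisenstein part of $V$ is orthogonal to cusp forms) and showing it vanishes unless $\mu_j$ is one of the $f_m$. Inserting the Fourier expansions of $\theta(2z)$, $\overline{\theta(z)}$, and $\mu_j$ and performing the $x$-integral restricts the sum to the coefficients $\rho_j(2a^2-b^2)$ with $a,b\in\mathbb{Z}$; since $2a^2-b^2=-N(b+a\sqrt2)$, only frequencies that are norms from $\mathbb{Z}[\sqrt2]$ contribute. This pairing is a theta lift attached to $\mathbb{Q}(\sqrt2)$, whose image among the Maass forms is spanned exactly by the dihedral forms built from the Hecke characters $\eta^m$; equivalently, grouping the $(a,b)$-sum by ideals shows the pairing detects only the $\eta^m$-isotypic part of $\mu_j$, forcing $\langle V_1,\mu_j\rangle=0$ for every non-dihedral $\mu_j$. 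I expect this vanishing --- the statement that the spectral support of $V$ is precisely the dihedral spectrum --- to be the main obstacle, both because it requires the arithmetic input from $\mathbb{Q}(\sqrt2)$ and because one must rule out \emph{all} other cusp forms rather than verify a single identity.

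Finally I would assemble the surviving terms. The Poincar\'e inner product $\langle P_h(\cdot,\overline s+\tfrac12;\chi), f_m\rangle$ unfolds in the standard way to extract the $h$-th Fourier coefficient of $f_m$ --- namely $\lambda_m(h)$, which is real --- times a Mellin transform of $e^{-2\pi h y}K_{it_m}(2\pi h y)$. This is the table integral \cite[6.621(3)]{GradshteynRyzhik07} used earlier, and because the spectral parameter of $P_h$ in Proposition~\ref{prop:single_ds_basic} is $\overline s+\tfrac12$, its two parameters coincide and it evaluates, up to an $h$-power, to $\Gamma(s+it_m)\Gamma(s-it_m)/\Gamma(s+\tfrac12)$ with $t_m=\tfrac{m\pi}{2\log(1+\sqrt2)}$. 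The companion factor $\langle V_1,f_m\rangle$ is an explicit constant independent of $h$, and the $L^2$-normalization $\|f_m\|^{-2}$ entering through the orthonormal basis is a known Hecke $L$-value for dihedral forms; together with the prefactor $(8\pi)^s/\Gamma(s)$ and the Legendre duplication formula $\Gamma(s)\Gamma(s+\tfrac12)=2^{1-2s}\sqrt\pi\,\Gamma(2s)$, these combine to the stated factor $\Gamma(s+it_m)\Gamma(s-it_m)/\big(2^{1-3s}\log(1+\sqrt2)\Gamma(2s)\big)$. The symmetries $f_m=f_{-m}$ and $\lambda_m=\lambda_{-m}$, together with the evenness of the Gamma factor in $m$, let me repackage the sum over distinct dihedral forms as a sum over all $m\in\mathbb{Z}$, with the sign $(-1)^m$ arising from $\lambda_m(-1)=(-1)^m$; the $m=0$ summand is exactly the Eisenstein term reserved from Proposition~\ref{prop:single_ds_basic} (the ``missing'' Maass form), yielding \eqref{eq:in_thm_single_ds_decomposition}.
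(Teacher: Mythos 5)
Your skeleton matches the paper's: reduce via Proposition~\ref{prop:single_ds_basic}, spectrally expand $\langle V,P_h\rangle$ as in~\eqref{eq:spectral_full}, kill the continuous spectrum through constant terms, show only the dihedral forms $f_m$ survive in the discrete spectrum, then assemble with the duplication formula and fold the Eisenstein term in as the $m=0$ summand. Your continuous-spectrum argument and your final assembly are essentially the paper's. The problem is the step you yourself flag as the heart of the matter, Lemma~\ref{lem:self_dual_survives}, where your argument has a genuine gap in two places. First, you cannot ``insert the Fourier expansions and perform the $x$-integral'' in $\langle V_1,\mu_j\rangle$: this inner product is an integral over $\Gamma_0(8)\backslash\mathcal{H}$, not over the strip $[0,1]\times(0,\infty)$, and neither $V_1$ nor $\mu_j$ is a sum over $\Gamma_\infty\backslash\Gamma_0(8)$ that would permit unfolding (the strip contains infinitely many copies of the fundamental domain, so the strip integral of $V_1\overline{\mu_j}$ is not the inner product --- it diverges unless the inner product already vanishes). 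The paper's key device, absent from your sketch, is to realize $\overline{\theta(z)}$ as $c^{-1}$ times the residue at $w=\tfrac34$ of the metaplectic Eisenstein series $E^{\frac12}(z,w;\Gamma_0(8))$; it is that Eisenstein series, being a sum over $\Gamma_\infty\backslash\Gamma_0(8)$, which unfolds and converts $\langle V,\mu_j\rangle$ into the residue of a Rankin--Selberg integral involving $\sum_n \overline{\rho_j(2n^2)}\,n^{-(2w-\frac12)}$.

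Second, even granting a reduction to norm-indexed coefficients, your inference ``only frequencies of the form $2a^2-b^2$ appear, hence only the dihedral forms pair nontrivially'' is a non sequitur: a non-dihedral Maass form has abundant nonzero coefficients at norms of $\mathbb{Z}[\sqrt2]$ (at $n=1$, at perfect squares, etc.), so support of the pairing on norm frequencies by itself forces nothing. The actual mechanism in the paper is analytic, not combinatorial: the unfolded integral is a residue at $s=1$ of (essentially) $L(s,\Sym^2\mu_j)$, which is nonzero only when that $L$-function has a pole, i.e.\ only when $\mu_j$ is self-dual; since $\overline{\lambda_j(n)}=\chi(n)\lambda_j(n)$ with $\chi$ nontrivial, self-duality means $\mu_j=\mu_j\otimes\chi$, and then~\cite{KS02} identifies $\mu_j$ as dihedral of exactly the type~\eqref{eq:dihedral}. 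Your alternative ``theta lift whose image is spanned by the dihedral forms'' is precisely the conclusion to be proved, and ``grouping the $(a,b)$-sum by ideals'' is not an argument for it; making that route rigorous requires theta-correspondence machinery well beyond what you invoke. Relatedly, the explicit value $\rho_j(1)\langle V,\mu_j\rangle=2(-1)^m/\log(1+\sqrt2)$, which you need to get the stated constants, is asserted rather than computed: it requires the residue constant $c^{-1}=\tfrac1{4\pi}$ of~\eqref{eq:cvalue}, the Rankin--Selberg residue~\eqref{eq:rankinres2}, and the evaluations $L(2,\chi^2)=\pi^2/8$, $L(1,\chi)=\log(1+\sqrt2)/\sqrt2$; note also that the sign $(-1)^m$ enters through $\lambda_m(2)=(-1)^m$ acting on the coefficients $\rho_j(2n^2)$ (the doubling in $\theta(2z)$), not through $\lambda_m(-1)$ as you suggest.
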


The proof of this decomposition is contained in the following subsections.  We first compute the continuous part of the spectrum, which vanishes, then demonstrate that only dihedral Maass forms contribute to the discrete part of the spectrum.  A final bit of simplification completes the proof.

\subsection{Continuous spectrum}

The continuous spectrum component of~\eqref{eq:spectral_full} comes from
Eisenstein series associated to the non-singular cusps, $0$ and $\infty$. We will show that
both terms vanish.

\begin{lemma}\label{lem:continuous_vanishes}
  We have that $\langle V, E_{\mathfrak{a}}(\cdot, \overline{s}; \chi) \rangle =
  0$ for the cusps $0$ and $\infty$.
\end{lemma}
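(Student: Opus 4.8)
The plan is to prove the vanishing by unfolding the Eisenstein series against $V$ and observing that the resulting integral only sees the constant (zeroth Fourier) term of $V$ at the cusp $\mathfrak{a}$, which the construction in Section~\ref{ssec:constructing_v} forces to vanish exactly. For $\Re s > 1$ the series $E_{\mathfrak{a}}(z, s; \chi)$ converges absolutely as a sum over $\Gamma_{\mathfrak{a}} \backslash \Gamma_0(8)$, and since $V$ transforms with nebentypus $\chi$, the standard unfolding (legitimate because $V$ is rapidly decreasing at every cusp) gives, with $w = u + iv$ and $\sigma_{\mathfrak{a}}$ the scaling matrix of Section~\ref{ssec:cusps},
\[
  \langle V, E_{\mathfrak{a}}(\cdot, \overline{s}; \chi) \rangle
  = \int_{\Gamma_\infty \backslash \mathcal{H}} V(\sigma_{\mathfrak{a}} w)\, v^{s}\, \frac{du\, dv}{v^{2}}
  = \int_0^\infty c_{\mathfrak{a}}(v)\, v^{s - 2}\, dv,
\]
where $c_{\mathfrak{a}}(v)$ denotes the constant term of $V$ at the cusp $\mathfrak{a}$. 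Thus everything reduces to showing $c_{\mathfrak{a}} \equiv 0$ for $\mathfrak{a} \in \{0, \infty\}$.

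The heart of the argument is this exact cancellation. At $\infty$, writing $\theta(2z)\overline{\theta(z)} = \sum_{n, m \geq 0} r_1(n) r_1(m)\, e\big((2n - m)x\big) e^{-2\pi(2n + m)y}$, the constant term of $V_1 = \sqrt{y}\,\theta(2z)\overline{\theta(z)}$ comes from the diagonal $m = 2n$; since $r_1(n) r_1(2n) \neq 0$ forces both $n$ and $2n$ to be perfect squares, only $n = 0$ survives, and the constant term of $V_1$ at $\infty$ equals \emph{exactly} $\sqrt{y}$. By Lemma~\ref{lem:eisenstein_fourier_expansion} the constant term of $E(z, \tfrac{1}{2}; \chi)$ at $\infty$ is likewise exactly $\sqrt{y}$, so $c_\infty \equiv 0$. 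At the cusp $0$ I would use $V_1|_{[\sigma_0]} = \overline{V_1}$ (which follows from~\eqref{eq:theta_psi_slash_0}) together with the same perfect-square obstruction applied to $\theta(z)\overline{\theta(2z)}$ to obtain constant term $\sqrt{y}$, and this cancels against the constant term of $E(\sigma_0 z, \tfrac{1}{2}; \chi)$ computed in~\eqref{eq:E0_constant_fourier_coeff}. In both cases the cancellation is identical in $v$, not merely asymptotic, which is exactly the property for which $V$ was built.

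With $c_{\mathfrak{a}} \equiv 0$ the Mellin integral vanishes identically, proving the lemma for $\Re s > 1$, and the values $s = \tfrac{1}{2} + it$ needed in~\eqref{eq:spectral_full} follow by meromorphic continuation in $s$. I expect the main obstacle to be the bookkeeping in the unfolding rather than the arithmetic: one must confirm that $0$ and $\infty$ are precisely the cusps at which $\chi$ is non-singular (so that $E_{\mathfrak{a}}$ is nonzero and the coset sum is genuinely indexed by $\Gamma_{\mathfrak{a}} \backslash \Gamma_0(8)$), track how the nebentypus interacts with conjugation by $\sigma_{\mathfrak{a}}$, and verify the absolute convergence justifying both the interchange of sum and integral and the extraction of the constant term. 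These convergence points are mild here because $V$ is rapidly decreasing at all four cusps: its constant term vanishes at $0$ and $\infty$ and it decays exponentially at $\tfrac{1}{2}$ and $\tfrac{1}{4}$.
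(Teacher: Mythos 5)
Your proposal is correct and follows essentially the same route as the paper: unfold the Eisenstein series, extract the constant Fourier term of $V$ at the cusp, and observe that it vanishes identically because $r_1(n)r_1(2n)=0$ for $n\geq 1$ while the constant terms of $E(z,\tfrac12;\chi)$ at $\infty$ and of $E(\sigma_0 z,\tfrac12;\chi)$ (via~\eqref{eq:E0_constant_fourier_coeff}) are exactly $\sqrt{y}$. The only cosmetic difference is that the paper reaches the cusp $0$ through the involution identity $E_0(z,s;\chi)=E_\infty(\sigma_0 z,s;\chi)$ rather than unfolding directly with the scaling matrix, and you are slightly more explicit about extending from $\Re s>1$ to the critical line, a point the paper passes over.
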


\begin{proof}

For the cusp at $\infty$, we directly compute
\begin{equation}
  \langle V, E_\infty(z, \overline{s}; \chi) \rangle
  =
  \int_0^\infty \int_0^1 V(z) y^{s - 1} \frac{dx \, dy}{y}.
\end{equation}
The integral over $x$ extracts the constant term in the Fourier expansion of
$V(z)$. As $V(z) = \sqrt{y} \theta(2z)\overline{\theta(z)} - E(z, \frac{1}{2};
\chi)$, the constant Fourier coefficient is exactly
\begin{equation}
  \sqrt{y} \sum_{m \geq 0} r_1(m) r_1(2m) e^{-8\pi m y} - \sqrt{y}.
\end{equation}
Since $r_1(m)r_1(2m) = 0$ except when $m = 0$, in which case it is $1$, we see
that the constant Fourier coefficient is identically $0$. Thus $\langle V,
E_\infty \rangle = 0$.

For the cusp at $0$, we first note that $\sigma_0 = \big( \begin{smallmatrix}
0&-1 \\ 8&0 \end{smallmatrix} \big)$ is an involution on $\mathcal{H}$ and that
\begin{equation}
  \begin{pmatrix}
    0&-1 \\ 8&0
  \end{pmatrix}
  \begin{pmatrix}
    a&b \\ 8c&d
  \end{pmatrix}
  =
  \begin{pmatrix}
    d&-c \\ -8b&a
  \end{pmatrix}
  \begin{pmatrix}
    0&-1 \\ 8&0
  \end{pmatrix}.
\end{equation}
Thus $\sigma_0\big( \Gamma_0(8) \backslash \mathcal{H} \big) = \Gamma_0(8)
\backslash \mathcal{H}$ and it immediately follows that $E_0(z, s; \chi) =
E_{\infty}(\sigma_0z, s; \chi)$. Within the inner product
\begin{equation}
  \langle V, E_0(\cdot, \overline{s}; \chi) \rangle
  =
  \iint_{\Gamma_0(8) \backslash \mathcal{H}}
  {\mkern-15mu}
  V(\sigma_0(z)) \overline{E_{\infty}(z, \overline{s}; \chi)} \, d\mu
  =
  \int_0^\infty
  {\mkern-15mu}
  \int_0^1 V(\sigma_0 z) y^{s - 1} \frac{dx dy}{y},
\end{equation}
the integral over $x$ extracts the constant term in the Fourier expansion of
$V(\sigma_0 z)$. Earlier we noted that~\eqref{eq:theta_psi_slash_0} implies that
$\Psi\overline{\theta}|_{[\sigma_0]} = \overline{\Psi}\theta$; and
in~\eqref{eq:E0_constant_fourier_coeff}, we computed that the constant term of
$E_{\infty}(\sigma_0z, \frac{1}{2}; \chi)$ is $\sqrt y$. Thus $\langle V, E_0(z,
\overline{s}; \chi)\rangle = 0$ as well.
\end{proof}

\subsection{Discrete spectrum}

The integral formula~\cite[6.621(3)]{GradshteynRyzhik07} and unfolding produces the identity
\begin{equation}
  \overline{\langle P_h(\cdot, \overline{s}; \chi), \mu_j \rangle}
  =
  \frac{%
    \rho_j(h) \sqrt{\pi}
    \Gamma(s - \frac{1}{2} + it_j) \Gamma(s - \frac{1}{2} - it_j)
  }{(4\pi h)^{s - \frac{1}{2}} \Gamma(s)}
\end{equation}
for the inner product of the Poincar\'e series and a Maass form of type $\frac{1}{2}+it_j$.

By applying this expression and the result of Lemma~\ref{lem:continuous_vanishes} to the spectral expansion~\eqref{eq:spectral_full}, we obtain the following lemma.

\begin{lemma}\label{lem:discrete_basic}
  For $h \geq 1$ and $\Re s \gg 1$,
  \begin{equation}
    \langle V, P_h(\cdot, \overline{s}; \chi) \rangle
    =
    \sum_j \frac{\rho_j(h) \sqrt \pi}{(4 \pi h)^{s - \frac{1}{2}}}
    G(s, it_j) \langle V, \mu_j \rangle,
  \end{equation}
  in which
  $G(s, z) := \Gamma(s - \frac{1}{2} + z) \Gamma(s - \frac{1}{2} - z) /
  \Gamma(s) $.
\end{lemma}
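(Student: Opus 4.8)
The plan is to begin from the full spectral expansion of the inner product recorded in~\eqref{eq:spectral_full} and to eliminate terms using results already in hand. Since $V \in L^2(\Gamma_0(8) \backslash \mathcal{H}; \chi)$ by the preceding Proposition and $P_h(\cdot, s; \chi) \in L^2(\Gamma_0(8) \backslash \mathcal{H}; \chi)$ for $\Re s \gg 1$, the pairing $\langle V, P_h \rangle$ is well-defined, and the spectral decomposition of $P_h$ may be inserted directly into it. This splits the inner product into a discrete sum over the orthonormal Maass basis $\{\mu_j\}$ and a continuous integral over the Eisenstein series attached to the non-singular cusps $0$ and $\infty$, exactly as written in~\eqref{eq:spectral_full}.

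First I would dispose of the continuous part. Each summand of the continuous integral carries the factor $\langle V, E_\mathfrak{a}(\cdot, \tfrac{1}{2} + it; \chi) \rangle$. The constant-term computation in Lemma~\ref{lem:continuous_vanishes} establishes the vanishing of $\langle V, E_{\mathfrak{a}}(\cdot, \overline{s}; \chi) \rangle$ without reference to the particular value of the spectral parameter, so it applies verbatim at $\tfrac{1}{2} + it$ for every real $t$. Hence the integrand vanishes pointwise, the entire continuous contribution is identically zero, and only the discrete sum survives.

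For the discrete part, I would substitute the already-derived Poincar\'e--Maass inner product
\[
  \overline{\langle P_h(\cdot, \overline{s}; \chi), \mu_j \rangle}
  =
  \frac{\rho_j(h)\sqrt{\pi}\,\Gamma(s - \tfrac{1}{2} + it_j)\Gamma(s - \tfrac{1}{2} - it_j)}{(4\pi h)^{s - \frac{1}{2}} \Gamma(s)}
\]
into each summand of~\eqref{eq:spectral_full}. Recognizing the gamma ratio $\Gamma(s - \tfrac{1}{2} + it_j)\Gamma(s - \tfrac{1}{2} - it_j)/\Gamma(s)$ as $G(s, it_j)$ and pulling the factors $\sqrt{\pi}$ and $(4\pi h)^{-(s - 1/2)}$ outside the sum then produces the claimed identity immediately.

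The only genuine subtlety is justifying the term-by-term pairing of $V$ against the spectral expansion of $P_h$, that is, the interchange of the inner product with the spectral sum and integral. This is licensed by the spectral theorem together with the membership of both $V$ and $P_h$ in $L^2(\Gamma_0(8) \backslash \mathcal{H}; \chi)$: Parseval's identity for the orthonormal spectral basis guarantees convergence in the required sense precisely in the range $\Re s \gg 1$, where $P_h$ lies in $L^2$ and its spectral coefficients decay fast enough for the rearrangement to be valid. With this interchange secured, the substitution above is exact and the lemma follows.
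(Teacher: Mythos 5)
Your proposal is correct and follows essentially the same route as the paper: insert the spectral expansion~\eqref{eq:spectral_full} into $\langle V, P_h \rangle$, kill the continuous part via Lemma~\ref{lem:continuous_vanishes}, and substitute the unfolded Poincar\'e--Maass inner product formula to obtain the stated sum. Your additional remarks on the validity of the term-by-term pairing (via $L^2$ membership and Parseval) make explicit a point the paper leaves implicit, but the argument is the same.
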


We now investigate the individual terms in the discrete spectrum.  Just as the
inner products $\langle V, E_\mathfrak{a} \rangle$ caused the continuous
spectrum to vanish, the inner products $\langle V, \mu_j \rangle$ cause many
terms in the discrete spectrum to vanish.

\begin{lemma} \label{lem:self_dual_survives}
We have $\langle V, \mu_j \rangle \neq 0$ if and only if $\mu_j = \langle
f_m, f_m \rangle^{-1/2}  f_m$ for some $m \in \mathbb{N}$ as
in~\eqref{eq:dihedral}, in which case
\begin{equation}\label{eq:inprod}
  \rho_j(1)\langle V, \mu_j \rangle =\frac{2  (-1)^m}{\log(1+\sqrt{2})}.
\end{equation}
\end{lemma}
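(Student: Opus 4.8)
The plan is to reduce the pairing to one involving only the theta part $V_1$, to recognize that pairing as a period attached to the real quadratic field $\mathbb{Q}(\sqrt 2)$, and then to read off both the vanishing and the exact constant from the explicit dihedral data. First I would discard the Eisenstein piece: since $E(z,\tfrac12;\chi)$ is an Eisenstein series and each $\mu_j$ is a cusp form, we have $\langle E(\cdot,\tfrac12;\chi),\mu_j\rangle = 0$, so by \eqref{eq:V_definition} it follows that $\langle V,\mu_j\rangle = \langle V_1,\mu_j\rangle = \langle \sqrt y\,\theta(2z)\overline{\theta(z)},\mu_j\rangle$. This removes the regularizing term entirely and reduces the whole lemma to the evaluation of a single, purely theta-theoretic inner product.

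The engine of the argument is the interpretation of $\langle V_1,\mu_j\rangle$ as a period. Writing out the Fourier expansions of $\theta(2z)$ and $\overline{\theta(z)}$, the $k$-th Fourier coefficient of $V_1$ equals $\sqrt y\sum_{2n^2-m^2=k}e^{-2\pi(2n^2+m^2)y}$, so the coefficients of $V_1$ are supported exactly on the integers represented by the indefinite binary form $2n^2-m^2$ of discriminant $8$ — that is, on (negatives of) norms from $K=\mathbb{Q}(\sqrt 2)$. Thus $\sqrt y\,\theta(2z)\overline{\theta(z)}$ is a Siegel theta kernel for this norm form, and pairing it against $\mu_j$ as in \eqref{eq:maass} realizes $\langle V_1,\mu_j\rangle$ as the period of $\mu_j$ over the closed geodesic (equivalently, the torus) attached to $K$, with each value of $k$ contributing the full orbit of representations under the unit group generated by $1+\sqrt 2$. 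By the multiplicativity of the Hecke eigenvalues and the recurrence \eqref{eq:primepows}, this period can be nonzero only when $\lambda_j(p)=0$ at every prime $p$ inert in $K$ (those with $\chi(p)=-1$), i.e. only when $\mu_j$ is dihedral over $\mathbb{Q}(\sqrt 2)$. Combined with the classification already recorded in the text — that the only dihedral cusp forms of level $8$ and nebentypus $\chi$ are the $f_m$ of \eqref{eq:dihedral} — this forces $\mu_j\propto f_m$ and yields the ``only if'' direction, in direct analogy with how the vanishing of constant terms killed the continuous spectrum in Lemma~\ref{lem:continuous_vanishes}.

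For the surviving forms I would compute the constant by matching the period against the explicit coefficients \eqref{eq:dihedralcoeffs}. The sum over the unit orbit of representations is what produces the regulator of $K$, namely $\log(1+\sqrt 2)$ in the denominator — the very quantity that entered Proposition~\ref{prop:single_ds_basic} through the class number formula $L(1,\chi)=\log(1+\sqrt 2)/\sqrt 2$, and whose doubled value $2\log(1+\sqrt2)$ already appears as the geodesic length governing the spectral parameter $\tfrac{im\pi}{2\log(1+\sqrt 2)}$ of $f_m$. Normalizing $\mu_j=\langle f_m,f_m\rangle^{-1/2}f_m$ so that $\rho_j(n)=\rho_j(1)\lambda_m(n)$ with $\lambda_m(1)=1$ and $\lambda_m(-1)=(-1)^m$, and using the symmetry $f_m=f_{-m}$ together with the $e(nx)+(-1)^m e(-nx)$ form of \eqref{eq:dihedral}, the period evaluates to $\rho_j(1)\langle V,\mu_j\rangle=\tfrac{2(-1)^m}{\log(1+\sqrt 2)}$, as claimed; the factor $2$ reflects the pair $\pm m$ of Grossencharacters mapping to the same form and the $(-1)^m$ the sign convention at $p=2$.

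The main obstacle is the middle step. The inner product $\langle V_1,\mu_j\rangle$ does not converge under a naive strip unfolding, since $V_1$ only decays like $\sqrt y$ at the cusps — this is precisely why $V$ subtracts off $E(z,\tfrac12;\chi)$ — so the period must be formed in a regularized sense and one must justify interchanging it with the spectral and Fourier sums. Equally delicate is tracking the metaplectic and half-integral-weight normalizations through the theta identities of Section~\ref{ssec:cusps} and evaluating the resulting Bessel-function integral, because the clean appearance of the regulator and the precise value of the constant depend on getting every factor of $2$ and every Gamma- and Bessel-normalization exactly right.
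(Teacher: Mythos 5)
Your reduction to $\langle V_1,\mu_j\rangle$ is fine (the paper does the same, implicitly, since cusp forms are orthogonal to Eisenstein series), and your identification of $\sqrt y\,\theta(2z)\overline{\theta(z)}$ as the Siegel theta kernel of the norm form of $\mathbb{Q}(\sqrt 2)$ is correct and is indeed the conceptual source of the dihedral phenomenon. But the central step of your argument has a genuine gap, and as stated it is wrong. Pairing $\mu_j$ against this theta kernel does not produce ``the period of $\mu_j$ over the closed geodesic attached to $K$''; it produces the \emph{theta lift} of $\mu_j$ to the torus $O(1,1)$, evaluated at the point of the torus corresponding to the majorant $2n^2+m^2$. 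The distinction is essential: closed-geodesic (torus) periods of non-dihedral Maass forms are generically \emph{nonzero} --- by Waldspurger-type formulas they are governed by central $L$-values, not by poles or residues --- so if the pairing really were such a period, your ``only if'' direction would fail. What does vanish for non-dihedral $\mu_j$ is the theta lift itself, and proving that requires an actual mechanism: either seesaw duality/Rallis inner product (the lift, as a function on the compact torus, has Fourier coefficients proportional to $\langle \mu_j, f_m\rangle$, hence vanishes identically unless $\mu_j \propto f_m$), or, as the paper does it, replacing $\overline{\theta}$ by the residue at $w=\tfrac34$ of a metaplectic Eisenstein series, unfolding to obtain the Dirichlet series $\sum_{n\geq 1}\overline{\rho_j(2n^2)}/n^{2w-\frac12}$, and observing that this series is essentially $L(s,\Sym^2\mu_j)$, which has a pole at $s=1$ if and only if $\mu_j$ is self-dual, which (given the nebentypus $\chi$ and \cite{KS02}) holds if and only if $\mu_j$ is dihedral. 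Your stated justification --- ``by the multiplicativity of the Hecke eigenvalues and the recurrence \eqref{eq:primepows}'' --- is not an argument: multiplicativity holds for every Hecke--Maass form and cannot by itself force any pairing to vanish.

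The evaluation of the constant has the same character: it is asserted rather than derived. Your heuristic attributions (regulator from the unit orbit, factor $2$ from the pair $\pm m$, sign from the eigenvalue at $2$) are morally correct, but the value $2(-1)^m/\log(1+\sqrt 2)$ in the paper requires: the normalization $c^{-1}=\tfrac{1}{4\pi}$ of $\theta$ as an Eisenstein residue; the factorization, via $\lambda_m(2n)=(-1)^m\lambda_m(n)$, of the unfolded series into $L(s,f_m\otimes f_m)/\bigl(L(2s,\chi^2)L(s,\chi)\bigr)$; the Rankin--Selberg residue $\Res_{w=1}L(w,f_m\otimes f_m)=\tfrac18\cosh\bigl(\tfrac{m\pi^2}{2\log(1+\sqrt 2)}\bigr)\langle f_m,f_m\rangle$; the identity $\cosh(\pi t)\Gamma(\tfrac12+it)\Gamma(\tfrac12-it)=\pi$; and the values $L(2,\chi^2)=\tfrac{\pi^2}{8}$, $L(1,\chi)=\log(1+\sqrt2)/\sqrt2$. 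None of these, nor their analogues in your normalization (the archimedean integrals and the measure on the torus), appears in your outline, so the claimed constant is not established. A smaller point: your worry about regularizing $\langle V_1,\mu_j\rangle$ is misplaced, since $\mu_j$ decays exponentially at every cusp while $V_1$ grows only like $\sqrt y$, so this inner product converges absolutely; the regularization by $E(z,\tfrac12;\chi)$ is needed for the pairings with the continuous spectrum, not here.
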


To prove this lemma, we recognize $\theta$ as the residue of a weight $1/2$
Eisenstein series. We use this Eisenstein series in place of $\theta$ in
the inner product $\langle V, \mu_j \rangle$ to interpret the inner product using a Rankin-Selberg
type convolution. This object has no pole unless $\mu_j$ is self-dual, which coincides exactly with $\mu_j$ being dihedral, and implies that the associated residue must be zero in the non-dihedral case.


\begin{proof}

We define the weight $1/2$, level $8$ Eisenstein series
\begin{equation}
  E^{\frac{1}{2}}(z, w; \Gamma_0(8))
  :=
  \sum_{\gamma \in \Gamma_\infty \backslash \Gamma_0(8)}
  \Im(\gamma z)^w J(\gamma, z)^{-1},
\end{equation}
in which
\begin{equation}
	J(\gamma,z)  := j(\gamma,z) / \lvert j(\gamma,z) \rvert
\end{equation}
is a normalization of the theta multiplier $j(\gamma,z) = \theta(\gamma z)/
\theta(z)$ given in~\eqref{eq:theta_multiplier}. We note that this matches the
definitions of the half-integral weight Eisenstein series defined
in~\cite{GoldfeldHoffstein85} and~\cite{shimura1985eisenstein}, except that we
normalize the metaplectic cocycle. For comparison, if $\mathcal{E}(z, w)$
denotes the Eisenstein series in either of these works, then $E^{\frac{1}{2}}(z,
w; \Gamma_0({8})) = \Im(z)^{1/4} \mathcal{E}(z, w - \frac{1}{4})$. This
particular normalization agrees with the normalization of the metaplectic
Eisenstein series appearing in~\cite[\S13]{Iwaniec97} and
in~\cite{LowryDudaThesis}.

It is known~\cite[Theorem 2.3]{shimura1985eisenstein} that $E^{\frac{1}{2}}(z,
w; \Gamma_0(8))$ has a simple pole at $w = \frac{3}{4}$ with residue of the form
$ y^{1/4} g(z)$, where $g(z)$ is a holomorphic form of weight $\frac{1}{2}$ and level 8.
Since the space of such forms is one-dimensional, we have that $g(z) =
c^{-1}\theta(z)$ where $c$ can be computed using the methods
of~\cite{GoldfeldHoffstein85} to be
\begin{equation}
c^{-1}= \frac{1}{ 4\pi}.  \label{eq:cvalue}
\end{equation}
Since $\theta(z)$ is (up to a constant) the residue of $E^{\frac{1}{2}}(z, w;
\Gamma_0(8))$ at $w = \frac{3}{4}$, the same holds for $\overline{\theta(z)}$ and $\overline{E^{\frac{1}{2}}(z, \overline{w};
\Gamma_0(8))}$. We apply the latter to study $\langle V, \mu_j\rangle$, and compute that
\begin{align}
  \langle V, \mu_j\rangle
  &=
  \langle y^{\frac{1}{2}} \overline{\theta(z)} \theta(2z), \mu_j \rangle
  =
  c\, \Res_{w = \frac{3}{4}}
  \langle
    y^{\frac{1}{4}}\theta(2z) \overline{E^{\frac{1}{2}}(z, \overline{w}; \Gamma_0(8))},
    \mu_j
  \rangle
  \\
  &=
  c\, \Res_{w = \frac{3}{4}} \int_0^\infty \int_0^1 y^{\frac{1}{4}}
\theta(2z) y^w \overline{\mu_j(z)} \frac{dx dy}{y^2}
  \\
  &= c \,\Res_{w = \frac{3}{4}}
  \sum_{n \geq 1} r_1(n) \overline{\rho_j(2n)} \int_0^\infty y^{w - \frac{1}{4}}
  K_{it_j} (2\pi n y) e^{- 2 \pi n y} \frac{dy}{y}
  \\
  &=\label{line:Vmu_discrete_potential_pole}
 c\, \Res_{w = \frac{3}{4}}
  \frac{%
   2^{\frac{7}{4}} \pi^{\frac{3}{4}} \Gamma(w - \frac{1}{4} + it_j)\Gamma(w - \frac{1}{4} - it_j)
  }{%
    (8\pi)^w \Gamma(w + \frac{1}{4})
  }
  \sum_{n \geq 1} \frac{\overline{\rho_j(2n^2)}}{n^{2w - \frac{1}{2}}}.
\end{align}
The second line comes from unfolding the Eisenstein series. The third line
follows after expanding $\theta$ and $\mu_j$ in Fourier expansions, performing
the integral over $x$ to extract the constant term of
$\overline{\theta \mu_j}$, and recognizing that $\overline{K_{it_j}(y)} =
K_{it_j}(y)$ when $t_j \in \mathbb{R} \cup i\mathbb{R}$ and $y \in \mathbb{R}^+$.
The fourth line follows after computing the integral (which is
computed in~\cite[6.621(3)]{GradshteynRyzhik07}).

The gamma functions are holomorphic at $w = 3/4$, hence the only
potential source of a pole in~\eqref{line:Vmu_discrete_potential_pole} is the
Dirichlet series. This series differs trivially from the symmetric square series
associated to $\mu_j$,
\begin{equation}
L(s,\Sym^2 \mu_j) = L(2s,\chi^2) \sum_{n \geq 1} \frac{\lambda_j(n^2)}{n^{s}},
\end{equation}
only in the 2-factor, hence $\langle V,\mu_j\rangle$ is non-zero if and only if
$L(s,\Sym^2 \mu_j)$ has a pole at $s=1$. The identity
\[L(s,\Sym^2 \mu_j)  = \frac{L(s,\mu_j \otimes \mu_j)}{L(s,\chi)}\]
implies that $L(s,\Sym^2 \mu_j)$ has a simple pole at $s=1$ if and only if
$\mu_j$ is self-dual; that is, if $\overline{\lambda_j(n)} = \lambda_j(n)$ for
all $n \in \mathbb{Z}$. Since $\chi$ is a nontrivial character and
$\overline{\lambda_j(n)} = \chi(n) \lambda_j(n)$, we have $\lambda_j(n) =0$
whenever $\chi(n) =-1$ and so $\mu_j$ is unchanged when twisted by $\chi$. As
is noted in~\cite{KS02}, if $\mu_j = \mu_j \otimes \chi$ for a nontrivial Maass
form, then $\mu_j$ is a multiple of a dihedral Maass form and, for level 8
Maass forms of weight zero and with nebentypus $\chi$, these must exactly be
forms of the kind given in~\eqref{eq:dihedral}, as discussed following~\eqref{eq:heckeL}.
Hence $\mu_j = \rho_m(1)f_m$
for some $m \in \mathbb{N}$ and $\rho_m(1) =\langle f_m,f_m\rangle^{-1/2}
= \rho_j(1)$.

To prove~\eqref{eq:inprod} and complete the proof of
Lemma~\ref{lem:self_dual_survives}, note from $\lambda_m(2n) =
(-1)^m\lambda_m(n)$ that the series in~\eqref{line:Vmu_discrete_potential_pole}
is
\begin{equation}
  \sum_{n \geq 1} \!\frac{\overline{\rho_j(2n^2)}}{n^{s}}
  =  \rho_m(1)(-1)^m \frac{L(s,f_m \otimes f_m)}{L(2s,\chi^2)L(s,\chi)} \label{eq:rankin1}
\end{equation}
Combined with~\eqref{eq:cvalue} and the identity $\cosh(\pi t_j)
\Gamma(\frac{1}{2} - it_j)\Gamma(\frac{1}{2} + it_j) = \pi$, we
rewrite~\eqref{line:Vmu_discrete_potential_pole} in the form
\begin{align} \label{eq:simplified_line:Vmu_discrete_potential_pole}
  \langle V, \mu_j\rangle
  &=\frac{\rho_m(1)(-1)^m\sqrt{2}\,\pi^2}{\cosh(\frac{m\pi^2}{2\log(1+\sqrt{2})})L(2,\chi^2)L(1,\chi)} \Res_{w=1} L(w,f_m \otimes f_m).
\end{align}
By unfolding the inner product of $\vert f_m \vert^2$ against the level $8$
Eisenstein series, we also produce
\begin{equation} \label{eq:rankinres2}
  \Res_{w = 1} L(w, f_m \otimes f_m)
  =
  \tfrac{1}{8}\cosh\Big(\tfrac{m\pi^2}{2 \log(1+\sqrt{2})}\Big)
\langle f_m, f_m \rangle.
\end{equation}
Substitution of~\eqref{eq:rankinres2}
into~\eqref{eq:simplified_line:Vmu_discrete_potential_pole} gives
\[
  \rho_j(1)\langle V,\mu_j \rangle =
  \frac{(-1)^m\sqrt{2}\,\pi^2}{8L(2,\chi^2)L(1,\chi)} = \frac{2 \cdot
  (-1)^m}{\log(1+\sqrt{2})},
\]
in which $\rho_m(1)^2 = \langle f_m,f_m \rangle^{-1}$, $L(2,\chi^2) = \frac{\pi^2}{8}$, and $L(1,\chi) = \log(1+\sqrt{2})/\sqrt{2}$ have been used in the simplification.
\end{proof}


\subsection{Proof of Theorem~\ref{thm:single_ds_decomposition}}

Theorem~\ref{thm:single_ds_decomposition} quickly follows from the previous
discussion. One substitutes the spectral expansion~\eqref{eq:spectral_full} into
the expression for $D_h(s)$ given in
Proposition~\ref{prop:single_ds_basic}. Lemma~\ref{lem:continuous_vanishes}
shows that the continuous spectrum vanishes, while
Lemmas~\ref{lem:discrete_basic} and~\ref{lem:self_dual_survives} give the form
of the discrete component. It follows that
   \begin{align}\label{eq:incomplete1}
    D_h(s)  &=   \frac{2^{1+s}\sqrt{\pi}}{\log(1+\sqrt{2})\Gamma(s)}\sum_{m \geq 1} \frac{(-1)^m\lambda_m(h)}{h^{s}}
    G\bigg(s+\tfrac{1}{2}, \frac{im\pi}{2\log(1+\sqrt{2})}\bigg)  \\
   &\qquad  +    \frac{%
      2^{s}\sqrt{\pi} \sigma_0^\chi(h) \Gamma(s)
    }{\log(1 + \sqrt 2) h^{s} \Gamma(s+\frac{1}{2})},
\end{align}
The theorem now follows from the gamma duplication formula and the identities $\sigma_0^\chi(h) = \lambda_0(h)$ and $\lambda_m(h) = \lambda_{-m}(h)$. \qed


\section{Constructing the Double Dirichlet Series}\label{sec:doubleds}

We now build upon the analysis of $D_h(s)$, the Dirichlet series introduced
in~\eqref{eq:single_ds_def}, to study the meromorphic properties of the double
Dirichlet series
\begin{equation}
\mathcal{D}(s, w) := \sum_{\substack{m, h \geq 1 \\ (m,h)=1}} \frac{r_1(h)r_1(m)r_1(2m-h)}{m^s h^w}.
\end{equation}
We prove the following theorem.
%
\begin{theorem}\label{thm:double_ds_decomposition}
  The double Dirichlet series $\mathcal{D}(s,w)$ has meromorphic continuation to
  $\mathbb{C}^2$. For $\Re s$ and $\Re w$ sufficiently large, we have
  \begin{align}
    \mathcal{D}(s,w) &= \frac{2^{3s}(1-2^{-2s-2w})}{\zeta^{(2)}(4s+4w) \log(1+\sqrt{2})\Gamma(2s)} \\
    &\quad \times\sum_{m \in \mathbb{Z}} {(-1)}^m L(2s+2w,\eta^{2m})
    \Gamma(s +it_m)\Gamma(s-it_m),\label{eq:in_thm_double_ds_decomposition}
  \end{align}
  in which $t_m = \frac{m\pi}{2\log(1+\sqrt{2})}$, $\zeta^{(2)}(s)
  =(1-\frac{1}{2^s})\zeta(s)$, and $L(s,\eta^m)$ is as in~\eqref{eq:heckeL}.
\end{theorem}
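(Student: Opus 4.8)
The plan is to reduce the double series to the single series $D_h(s)$ of Theorem~\ref{thm:single_ds_decomposition}, which already carries the dihedral Hecke eigenvalues $\lambda_m(h)$, and then to evaluate the resulting sum over $h$ in closed form. First I would strip off the coprimality condition. Writing $\widetilde{\mathcal{D}}(s,w) = \sum_{m,h\geq 1} r_1(h)r_1(m)r_1(2m-h)\,m^{-s}h^{-w}$ for the analogous sum \emph{without} the restriction $(m,h)=1$, the parametrization $h=a^2$, $m=b^2$, $2m-h=c^2$ shows that $\gcd(m,h)=\gcd(a,b)^2$ is automatically a perfect square, so every such pair factors uniquely as $(g^2 m_0, g^2 h_0)$ with $(m_0,h_0)=1$ and $g=\gcd(a,b)=\gcd(a,b,c)$. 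Since $(m,h)=1$ is equivalent to primitivity for APs of squares, and the weights $r_1(\cdot)$ are unchanged under this scaling, this yields $\widetilde{\mathcal{D}}(s,w) = \zeta(2s+2w)\,\mathcal{D}(s,w)$. It therefore suffices to compute $\widetilde{\mathcal{D}}$ and divide by $\zeta(2s+2w)$.

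Next I would compute $\widetilde{\mathcal{D}}(s,w) = \sum_{h\geq1} r_1(h)h^{-w}D_h(s)$ by inserting the closed form for $D_h(s)$ from Theorem~\ref{thm:single_ds_decomposition}. In the region $\Re s,\Re w \gg 1$ everything converges absolutely, so I can interchange the sum over $h$ with the spectral sum over $m\in\mathbb{Z}$ and obtain $\widetilde{\mathcal{D}}(s,w)$ as an explicit constant times $\sum_m (-1)^m \Gamma(s+it_m)\Gamma(s-it_m)\sum_{h\geq1} r_1(h)\lambda_m(h)\,h^{-(s+w)}$. Since $r_1$ is supported on squares with $r_1(n^2)=2$, the inner sum collapses to $2\sum_{n\geq1}\lambda_m(n^2)\,n^{-(2s+2w)}$.

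The crux is to evaluate this symmetric-square-type Dirichlet series for the dihedral forms $f_m$. Writing $s'=2s+2w$ and using the relation $L(s',\Sym^2 f_m) = L(2s',\chi^2)\sum_n \lambda_m(n^2)n^{-s'}$ (established during the proof of Lemma~\ref{lem:self_dual_survives}) together with the factorization of the Rankin--Selberg $L$-function of the induced form $f_m=\mathrm{Ind}(\eta^m)$, I expect $\Sym^2 f_m$ to split as $\mathrm{Ind}(\eta^{2m})$ plus the trivial representation, giving $\sum_n \lambda_m(n^2)n^{-s'} = \zeta^{(2)}(s')\,L(s',\eta^{2m})/\zeta^{(2)}(2s')$, with $L(\cdot,\eta^{2m})$ as in~\eqref{eq:heckeL} and $L(2s',\chi^2)=\zeta^{(2)}(2s')$. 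I would verify this by a local Euler-factor computation at split, inert, and ramified primes using~\eqref{eq:dihedralcoeffs}. The hard part will be the ramified prime $p=2$: because $f_m$ has level $8$ and $2$ ramifies in $\mathbb{Q}(\sqrt2)$, the value $\lambda_m(2)=(-1)^m$ makes the naive factor $\zeta(s')L(s',\eta^{2m})$ overcount the $2$-Euler factor, and tracking this carefully is precisely what replaces $\zeta$ by the incomplete zeta $\zeta^{(2)}$ and ultimately produces the factor $(1-2^{-2s-2w})$ after dividing by $\zeta(2s+2w)$. Assembling the constants together with this identity recovers the claimed formula for $\mathcal{D}(s,w)$.

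Finally, meromorphic continuation to $\mathbb{C}^2$ follows from the explicit expression. Each summand is meromorphic in $(s,w)$, with poles arising only from the Gamma factors, from $\zeta^{(2)}(4s+4w)$, and from the pole of $L(2s+2w,\eta^{0})=\zeta_{\mathbb{Q}(\sqrt2)}(2s+2w)$ in the single term $m=0$. Because $t_m=\frac{m\pi}{2\log(1+\sqrt 2)}$ grows linearly in $m$, Stirling's formula gives $|\Gamma(s+it_m)\Gamma(s-it_m)|$ decaying like $e^{-\pi|t_m|}$ up to polynomial factors, while $L(2s+2w,\eta^{2m})$ grows only polynomially in $m$ by convexity in the conductor aspect. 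This exponential decay dominates the sum on compact subsets away from the polar divisors, so the series converges locally uniformly to a meromorphic function, completing the continuation.
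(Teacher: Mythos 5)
Your proposal is correct and follows essentially the same route as the paper: strip the coprimality condition via the factor $\zeta(2s+2w)$, sum the expansion of $D_h(s)$ from Theorem~\ref{thm:single_ds_decomposition} against $r_1(h)h^{-w}$, collapse the $h$-sum to $\sum_{n\geq 1} \lambda_m(n^2)\,n^{-(2s+2w)} = \zeta^{(2)}(2s+2w)L(2s+2w,\eta^{2m})/\zeta^{(2)}(4s+4w)$, and obtain the continuation from the exponential decay of the Gamma factors in $t_m$ against polynomial growth of the Hecke $L$-functions. The only cosmetic difference is that you verify the symmetric-square identity by local Euler-factor computations (with the correct care at the ramified prime $2$), whereas the paper invokes its Rankin--Selberg identity~\eqref{eq:rankin1} together with the factorization $L(s,f_m\otimes f_m)=L(s,\chi^2)L(s,\chi)L(s,\eta^{2m})$.
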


\begin{proof}
Theorem~\ref{thm:single_ds_decomposition} expresses $D_h(s)$ as a sum,~\eqref{eq:in_thm_single_ds_decomposition}, over $m \in \mathbb{Z}$.
By multiplying each term in~\eqref{eq:in_thm_single_ds_decomposition} by
$r_1(h)/h^w$ and summing over $h \geq 1$, we produce a decomposition for a
variant of $\mathcal{D}(s,w)$ which omits the coprime assumption.

The sum over $h \geq 1$ in that series may be written in the form
\begin{equation}
\sum_{h\geq 1} \frac{\lambda_m(h^2)}{h^{s}}
	= \frac{L(s,\chi^2)L(s,\eta^{2m})}{L(2s,\chi^2)}
	= \frac{\zeta^{(2)}(s)L(s,\eta^{2m})}{\zeta^{(2)}(2s)}
\end{equation}
by combining~\eqref{eq:rankin1} with the Euler product factorization $L(s,f_m \otimes f_m) = L(s,\chi^2)L(s,\chi)L(s,\eta^{2m})$.
Once simplified, we see that
\begin{align}
    &\sum_{m,h\geq 1} \frac{r_1(h)r_1(m)r_1(2m-h)}{m^sh^w} \\
    &\quad = \sum_{m \in \mathbb{Z}} \frac{{(-1)}^m L(2s+2w,\eta^{2m}) 2^{3s}\zeta^{(2)}(2s+2w)
    \Gamma(s +it_m)\Gamma(s-it_m)}{\zeta^{(2)}(4s+4w) \log(1+\sqrt{2})\Gamma(2s)}.
  \end{align}
The identity~\eqref{eq:in_thm_double_ds_decomposition} follows by dividing both
sides by $\zeta(2s+2w)$.

These $L$-functions have meromorphic continuation to all $s,w\in \mathbb{C}$ and grow
at most polynomially in $t_m$ in vertical strips. Thus Stirling's approximation
for the gamma functions gives normal convergence over $m$ and completes the
proof of the theorem.
\end{proof}

\begin{remark}
When $w=0$, the expansion~\eqref{eq:in_thm_double_ds_decomposition}
closely resembles the hyperbolic Fourier expansion of the level $1$, weight $0$ Eisenstein
series as described in~\cite[ch. 2, $\S$3]{Siegel80} or~\cite[\S{3.2}]{Goldfeld06}.  These
expansions differ from~\eqref{eq:in_thm_double_ds_decomposition} by also including the Hecke
$L$-functions of odd powers of $\eta$.
\end{remark}

\begin{remark}
The term $m=0$ is distinguished within the sum~\eqref{eq:in_thm_double_ds_decomposition} because $L(2s+2w, \eta^{2m})$ has a polar line at $2s+2w=1$ if and only if $m=0$. In Sections~\ref{sec:applications} and~\ref{sec:further_applications}, we show that this polar line is the source of the main terms in the asymptotic formulas presented in Theorems~\ref{thm:ratl_points},~\ref{thm:boundedmax},~\ref{thm:naturalXY}, and~\ref{thm:firsttwoX}.
\end{remark}

\begin{remark} The function $\zeta(2s+2w)\mathcal{D}(s,w)$ can be used to produce
counts for APs of squares which include both primitive and imprimitive APs. We do
not pursue this here, since counts for APs with unrestricted GCDs can be obtained
from the primitive case by purely elementary methods.
\end{remark}

\section{Weight functions}\label{sec:weight_functions}

To derive arithmetic results from Theorem~\ref{thm:double_ds_decomposition}, we
study double sums whose summands are $r_1(h)r_1(m)r_1(2m-h)$,
but whose range of summation is constrained. To
study these sums, we relate them to smooth analogues. In this section,
we describe the necessary smoothing weight functions and their properties.

We use two weight functions, $u_{+x}(t)$ and $u_{-x}(t)$. Define $u_{+x}(t)$ and
$u_{-x}(t)$ to be smooth, non-increasing functions with compact support,
satisfying
\begin{equation}
  u_{-x}(t) = \begin{cases}
    1 & t \leq 1 - \frac{1}{x}, \\
    0 & t \geq 1,
  \end{cases}
  \quad \text{and} \quad
  u_{+x}(t) = \begin{cases}
    1 & t \leq 1, \\
    0 & t \geq 1 + \frac{1}{x},
  \end{cases}
\end{equation}
where $x > 1$ is an optimizing parameter chosen in each application.
Let $U_{-x}(s)$ and $U_{+x}(s)$ denote the Mellin transforms of $u_{-x}(t)$ and
$u_{+x}(t)$, respectively. Trivial bounds, differentiation under the integral,
and the convexity principle (coupled with repeated integration by parts) show that
\begin{enumerate}
  \item $U_{\pm x}(s) = s^{-1} + O_s(1/x)$.
  \item $U_{\pm x}'(s) = s^{-2} + O_s(1/x)$.
  \item for all $\alpha \geq 1$, and for $s$ constrained in a vertical strip with
  $\lvert s \rvert > \epsilon$, we have
  \begin{equation}\label{eq:weight_Us_bound}
    U_{\pm x}(s)
    \ll_\epsilon
    \frac{1}{x} \Big( \frac{x}{1 + \lvert s \rvert} \Big)^{\alpha}.
  \end{equation}
\end{enumerate}

In each application, we construct two smoothed approximations
$S_{-x}$ and $S_{+x}$ to a desired sum $S$, formed from smoothing $S$ with
$u_{-x}(t)$ and $u_{+x}(t)$ respectively, such that
$S_{-x} \leq S \leq S_{+x}$.
We recognize each $S_{\pm x}$ as an integral transform of $\mathcal{D}(s, w)$
against $U_{\pm x}(s)$, and use the meromorphic continuation of $\mathcal{D}(s, w)$ to
produce bounds.

\section{Application to APs with constrained ratios}\label{sec:applications}

One of the advantages in studying the meromorphic properties of $\mathcal{D}(s,w)$ is its
flexibility in producing asymptotics for a variety of sums related to APs
$\{h,m,2m-h\}$ of squares. We demonstrate this flexibility through examples, by applying
classical Tauberian techniques to $\mathcal{D}(s, w)$.

In this section, we count primitive APs of squares in which $m \leq X$
and $h/m \leq \delta$ for some fixed $\delta \in (0, 1)$.
In Section~\ref{sec:further_applications}, we give three further applications:
counts for primitive APs with bounded maximum, with independently bounded first and second terms,
and for APs in which $mh \leq X$.

This section serves as a model for the later applications. We provide complete
details here, as this application requires the most explicit computation. The
applications in Section~\ref{sec:further_applications} are similar, but simpler.

\subsection{Statement of Result}%
\label{ssec:ratl_points}

We study sums of the form
\begin{equation}
  S(X, \delta)
  :=
 \sum_{m \leq X} \sideset{}{'}\sum_{h/m \leq \delta} r_1(m)r_1(h)r_1(2m-h),
\end{equation}
in which the `prime' denotes the restriction $(m,h)=1$. As described in
Section~\ref{ssec:equidistribution}, this sum also counts rational points on the
circle $x^2 + y^2 = 2$.

Our main result of this section is the following theorem.
\begin{theorem}\label{thm:ratl_points}
  Fix $\delta \in [0, 1]$. Then for any $\epsilon > 0$, the number of primitive APs of squares
$\{h,m,2m-h\}$ with $m \leq X$ and $(h/m) \leq \delta$ is
  \begin{equation}
  \begin{split}
    \tfrac{1}{8} S(X, \delta)
    &= \tfrac{1}{8}\sum_{m \leq X} \sideset{}{'}\sum_{h/m \leq \delta} r_1(m) r_1(h) r_1(2m - h)
    \\
    &=
    \frac{2}{\pi^2} \arcsin(\sqrt{\delta / 2}) X^{\frac{1}{2}}
    + O_\epsilon(X^{\frac{3}{8}+ \epsilon}).
  \end{split}
  \end{equation}
\end{theorem}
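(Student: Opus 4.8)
The plan is to realize the constrained sum---after smoothing the cutoff $m \le X$---as a two-variable contour integral of the meromorphically continued series $\mathcal{D}$, to extract the main term from a single residue, and to bound the shifted contour against the smoothing discrepancy. First I would set up the integral representation. Using the weight functions of Section~\ref{sec:weight_functions}, replace $\mathbf{1}[m \le X]$ by $u_{\pm x}(m/X)$, so that the smoothed sums $S_{\pm x}$ sandwich the target, $S_{-x} \le S(X,\delta) \le S_{+x}$. Writing $u_{\pm x}(m/X) = \frac{1}{2\pi i}\int_{(c_s)} U_{\pm x}(s)(X/m)^s\,ds$ and handling the ratio constraint sharply via $\mathbf{1}[h \le \delta m] = \frac{1}{2\pi i}\int_{(c_w)} (\delta m/h)^w\,\frac{dw}{w}$, then collecting the powers of $m$ and $h$, I obtain
\[
  S_{\pm x}(X,\delta) = \frac{1}{(2\pi i)^2}\int_{(c_w)}\int_{(c_s)} X^s \delta^w\,\frac{U_{\pm x}(s)}{w}\,\mathcal{D}(s-w,w)\,ds\,dw,
\]
valid when $c_s - c_w$ and $c_w$ are large enough for the defining series of $\mathcal{D}$ to converge. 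The change of variables making the first argument $s-w$ is the key bookkeeping device: in $\mathcal{D}(s-w,w)$ the Hecke $L$-functions become $L(2s,\eta^{2m})$, and the Dedekind-zeta pole of the $m=0$ term, $L(2s,\eta^{0})=\zeta(2s)L(2s,\chi)$, sits cleanly at $s=\tfrac12$, independent of $w$.

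Next I would produce the main term. Holding $c_w$ at a small positive value---so that between $\Re s = \tfrac14$ and large $\Re s$ the only singularity is the simple pole at $s=\tfrac12$ (the Gamma poles lie at $\Re s \le c_w$ and the zeros of $\zeta^{(2)}(4s)$ at $\Re s \le \tfrac14$)---I shift the $s$-contour left past $s=\tfrac12$. The residue there comes entirely from $\Res_{s=1/2}L(2s,\eta^{0}) = \tfrac12 L(1,\chi) = \tfrac{\log(1+\sqrt2)}{2\sqrt2}$; after cancelling $\log(1+\sqrt2)$, inserting $1-2^{-1}=\tfrac12$ and $\zeta^{(2)}(2)=\tfrac{\pi^2}{8}$, and applying the duplication formula to $\Gamma(\tfrac12-w)^2/\Gamma(1-2w)$, the residue becomes a constant multiple of $X^{1/2}\,2^{-w}\Gamma(\tfrac12-w)/\Gamma(1-w)$. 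The surviving $w$-integral is a Mellin--Barnes integral, which I would evaluate using $\Gamma(\tfrac12-w)/\Gamma(1-w) = \tfrac{1}{\sqrt\pi}\int_0^1 t^{-1/2-w}(1-t)^{-1/2}\,dt$, interchanging, and applying $\frac{1}{2\pi i}\int_{(c_w)} y^w\,\frac{dw}{w} = \mathbf{1}[y>1]$; this collapses to $\frac{1}{\sqrt\pi}\int_0^{\delta/2} t^{-1/2}(1-t)^{-1/2}\,dt = \frac{2}{\sqrt\pi}\arcsin\sqrt{\delta/2}$. Tracking the constants and the normalization $U_{\pm x}(\tfrac12)=2+O(1/x)$ yields $\tfrac18 S \sim \tfrac{2}{\pi^2}\arcsin\sqrt{\delta/2}\,X^{1/2}$, with the $O(1/x)$ correction absorbed into the error.

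For the error I would bound the shifted integral, on $\Re s = \sigma$ for a suitable $\sigma < \tfrac38$, in absolute value as $\ll X^{\sigma}\iint |U_{\pm x}(s)|\,|\mathcal{D}(s-w,w)|\,\frac{|ds|\,|dw|}{|w|}$, and balance it against the smoothing discrepancy $S_{+x}-S_{-x}$. The latter counts APs with $X(1-1/x)\le m \le X(1+1/x)$; since $r_1(m)\neq 0$ forces $m$ to be one of $\asymp \sqrt X/x$ perfect squares and each such $m$ contributes $\ll r_2(2m)\ll X^{\epsilon}$, this discrepancy is $\ll X^{1/2+\epsilon}/x$. The decay bound \eqref{eq:weight_Us_bound} effectively truncates the $s$-integral at height $\asymp x$, so the contour contribution grows with $x$ according to the vertical growth of $\mathcal{D}$; optimizing $x$ to balance the two contributions produces the stated $O_\epsilon(X^{3/8+\epsilon})$, and the sandwich transfers the estimate from $S_{\pm x}$ to $S(X,\delta)$.

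The hard part is this error bound, and specifically obtaining growth estimates for $\mathcal{D}(s-w,w)$ in vertical strips that are uniform in the summation index $m$. The subtlety is that the analytic conductor of $L(2s,\eta^{2m})$ grows with $m$ (since $t_m = \tfrac{m\pi}{2\log(1+\sqrt2)}$), so convexity bounds for these Hecke $L$-functions must be summed against the decay of $\Gamma(s-w+it_m)\Gamma(s-w-it_m)/\Gamma(2(s-w))$, which by Stirling is only polynomial until $t_m$ exceeds $|\Im(s-w)|$ and exponentially small thereafter. Controlling this interplay---the number of essentially contributing $m$, their conductors, and the Gamma decay---uniformly enough to integrate in $s$ and $w$ and then to optimize $x$ is the genuinely delicate step; it pins the error exponent, whereas the residue computation for the main term is comparatively routine.
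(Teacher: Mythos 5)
Your skeleton matches the paper's proof closely: smooth the cutoff, realize the smoothed sum as a double contour integral of $\mathcal{D}(s-w,w)$, pull the main term out of the simple pole at $s=\tfrac{1}{2}$ via the Mellin--Barnes identity for $\arcsin$ (your beta-integral derivation reproduces \eqref{eq:arcsine_identity}), and reduce the error to uniform-in-$m$ control of the spectral sum, which is exactly the content of the paper's Lemma~\ref{lem:gamma_ratio_lemma}. The main-term bookkeeping you describe is correct. The genuine gap is in your treatment of the ratio constraint: you keep $\mathbf{1}[h\le \delta m]$ sharp, via the Perron kernel $\frac{1}{2\pi i}\int_{(c_w)}(\delta m/h)^w\,\frac{dw}{w}$, whereas the paper smooths \emph{both} constraints, replacing $1/w$ by $U_{\pm x}(w)$. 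This is not a cosmetic choice. Already your opening identity requires an unjustified interchange, since the Perron integral is only conditionally convergent and Fubini does not apply to the sum-integral defining $S_{\pm x}$; repairing this by truncated Perron introduces near-diagonal error terms ($h\approx\delta m$) whose count is essentially the quantity the theorem is trying to estimate. More fatally, your error analysis collapses: you propose to bound the shifted contour by
\begin{equation}
X^{\sigma}\iint \lvert U_{\pm x}(s)\rvert\,\lvert\mathcal{D}(s-w,w)\rvert\,
\frac{\lvert ds\rvert\,\lvert dw\rvert}{\lvert w\rvert},
\end{equation}
but this integral diverges. On vertical lines the spectral part of $\mathcal{D}(s-w,w)$ grows, in absolute value, linearly in $\lvert\Im(s-w)\rvert$: by Lemma~\ref{lem:gamma_ratio_lemma} the $m\neq 0$ sum is of size $1+\lvert s-w\rvert+\lvert s-w\rvert^{1/2}\lvert s\rvert^{1/2}$, and this is essentially sharp because roughly $\lvert\Im(s-w)\rvert$ of the spectral parameters $t_m$ contribute without exponential suppression. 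So for fixed $s$ and large $\lvert\Im w\rvert$ the integrand is of size $\lvert U_{\pm x}(s)\rvert\cdot\lvert w\rvert/\lvert w\rvert$, constant in $w$, and no amount of decay in the $s$-variable rescues the $w$-integral.

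This is precisely why the paper smooths twice: the bound \eqref{eq:weight_Us_bound} gives $U_{\pm x}(w)$ arbitrary polynomial decay (at the cost of powers of $x$), which absorbs the linear growth of the spectral sum and yields Proposition~\ref{prop:discrete_I_bound}, $I^{\mathrm{spec}}_{\pm x}\ll X^{1/4+\epsilon}x^{1+\epsilon}$. The fix for your argument is simply to adopt the second smoothing $u_{\pm x}\bigl(h/(m\delta)\bigr)$: it costs nothing in the final exponent, since you do not then need to count the $h$-transition region directly (which your $r_2$-argument handles for the $m$-cutoff but cannot handle for the ratio cutoff without circularity) --- instead, both $S_{+x}$ and $S_{-x}$ are shown to have the \emph{same} asymptotic $\frac{16}{\pi^2}\arcsin(\sqrt{\delta/2})X^{1/2}+O(X^{1/2}/x+X^{1/4+\epsilon}x^{1+\epsilon})$, and the sandwich plus the choice $x=X^{1/8}$ finishes the proof exactly as in your final step.
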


\begin{proof}
To prove this theorem, we define
\begin{align}
  S_{-x}(X, \delta)
  &:=
  \sideset{}{'}\sum_{m, h \geq 1} r_1(m)r_1(h)r_1(2m - h)
  u_{-x}\big(\tfrac{m}{X}\big)
  u_{-x}\big(\tfrac{h}{m\delta}\big)
  \\
  S_{+x}(X, \delta)
  &:=
  \sideset{}{'}\sum_{m, h \geq 1} r_1(m)r_1(h)r_1(2m - h)
  u_{+x}\big(\tfrac{m}{X}\big)
  u_{+x}\big(\tfrac{h}{m\delta}\big),
\end{align}
where $u_{\pm x}$ are the weight functions described in
Section~\ref{sec:weight_functions}.
By construction of $u_{\pm x}$ and the nonnegativity of the coefficients, we have the inequalities
\begin{equation}
  S_{-x}(X, \delta) \leq S(X, \delta) \leq S_{+x}(X, \delta).
\end{equation}

We recognize $S_{-x}$ and $S_{+x}$ as integral transforms of $\mathcal{D}(s-w, w)$:
\begin{equation}\label{eq:model_transform}
  S_{\pm x}(X, \delta)
  =
  \frac{1}{(2 \pi i)^2} \int_{(\sigma_w)}\int_{(\sigma_s)}
  \mathcal{D}(s - w, w) U_{\pm x}(s)U_{\pm x}(w) X^s \delta^w ds \, dw,
\end{equation}
where $\sigma_w$ and $\sigma_s$ are within the region of absolute convergence of
the multiple Dirichlet series $\mathcal{D}(s,w)$. We take $\sigma_w = \frac{1}{4}$ and
$\sigma_s = 10$ initially, which is justified by the upper bound
$r_1(h) r_1(m) r_1(2m-h) \ll 1$ and the fact that $r(2m-h) = 0$ for
$h > 2m$.

Our treatments of $S_{-x}$ and $S_{+x}$ are nearly identical.
From Theorem~\ref{thm:double_ds_decomposition}, the analysis of each integral
transform breaks up into the analysis of two pieces: the $m=0$ term, which
corresponds to the Dedekind zeta function for $\mathbb{Q}(\sqrt{2})$, and the
remainder of the discrete spectrum from~\eqref{eq:in_thm_double_ds_decomposition}.
We denote these integrals by $I_{\pm x}^{0}$ and $I_{\pm x}^{\mathrm{spec}}$,
respectively. Then we can rewrite~\eqref{eq:model_transform} as
\begin{equation}
  S_{\pm x}(X, \delta)
  =
  I_{\pm x}^{0}(10, \tfrac{1}{4}, X, \delta)
  +
  I_{\pm x}^{\mathrm{spec}}(10, \tfrac{1}{4}, X, \delta).
\end{equation}
We study $I_{\pm x}^{0}$ in Section~\ref{ssec:first_term_ratlpoints} and
$I_{\pm x}^{\mathrm{spec}}$ in Section~\ref{ssec:discrete_term_ratl_points},
culminating in the bounds from Propositions~\ref{prop:first_I_bound}
and~\ref{prop:discrete_I_bound}. When combined, these bounds give
\begin{equation}
  S_{\pm x}(X, \delta)
  =
  \frac{16}{\pi^2} \arcsin(\sqrt{\delta / 2}) X^{\frac{1}{2}}
  + O\Big( \frac{X^{\frac{1}{2}}}{x}
    + X^{\frac{1}{4}+\epsilon} x^{1 + \epsilon}
  \Big).
\end{equation}
Choosing
$x = X^{1/8}$ balances the error terms and the inequalities
$S_{-x}(X, \delta) \leq S(X, \delta) \leq S_{+x}(X, \delta)$ imply the theorem.
\end{proof}

In the remainder of this section, we provide the remaining technical details and
bounds used in the proof of this theorem.

\subsection{Principal term}\label{ssec:first_term_ratlpoints}

The primary growth in the integrals~\eqref{eq:model_transform} comes from the $m=0$ term in the decomposition~\eqref{eq:in_thm_double_ds_decomposition} for $\mathcal{D}(s,w)$.  From the identity $L(s,\eta^0)=\zeta(s)L(s,\chi)$ and the gamma duplication formula, this is
\begin{equation}
  \begin{split}
  I^{0}_{\pm x}(\sigma_s, \sigma_w, X, \delta)
  =
  \frac{2\sqrt{\pi}}{(2 \pi i)^2}
  &\int_{(\sigma_w)}
  \int_{(\sigma_s)}
  \Big(
  U_{\pm x}(s) U_{\pm x}(w) X^s (\delta/2)^w
  \\
  &\times
  \frac{%
    2^s\Gamma(s - w) \zeta^{(2)}(2s) L(2s, \chi)
  }{%
    \log(1 + \sqrt 2) \Gamma(\frac{1}{2}+s -w) \zeta^{(2)}(4s)
  } \Big) ds \, dw,
  \end{split}
\end{equation}
in which $\sigma_w = \frac{1}{4}$ and $\sigma_s = 10$ to begin.

The gamma and $L$-functions in the integrand conspire to give at most polynomial growth in vertical strips, which is counteracted by arbitrary polynomial decay in the weight functions $U_{\pm x}(s)$ and $U_{\pm x}(w)$.  We are thus free to shift lines of integration and extract residues.

Shifting the line of $s$-integration to $\sigma_s = \frac{1}{4}+\epsilon$ passes a simple pole at $s=\frac{1}{2}$ from the zeta function.  The residue at this pole can be written
\begin{align} \label{eq:MT_residue}
R_{\frac{1}{2}} :=
\frac{4X^\frac{1}{2}}{2\pi i} \int_{(\frac{1}{4})} \frac{(\delta/2)^w \Gamma(\frac{1}{2}-w) U_{\pm x}(\frac{1}{2})U_{\pm x}(w)}{\pi^\frac{3}{2} \Gamma(1-w)} \,dw.
\end{align}
The dominant growth of this integral can be explicitly evaluated using the Mellin-inversion relationship
\begin{equation}\label{eq:arcsine_identity}
  \frac{1}{2\pi i} \int_{(\frac{1}{4})}
  \frac{\Gamma(\frac{1}{2} - w)}{\Gamma(1 - w) w}
  z^w dw
  =
  \frac{2}{\sqrt \pi} \arcsin(\sqrt z)
\end{equation}
and the approximation $U_{\pm x}(w) = 1/w + O(1/x)$.  To justify our treatment of the $O(1/x)$ error term, we require a technical lemma.

\begin{lemma}\label{lem:MT_eval_IPB}
  Fix $\epsilon > 0$ and a meromorphic function $F(w)$ satisfying
  $F(w) \ll \vert \Im w \vert^{-\epsilon}$ on $\Re w=\sigma_w$.
  Define $H(z) =\frac{1}{2\pi i} \int_{(\sigma_w)} F(w) \frac{z^w}{w} \, dw$.
Then $H(z)$ is meromorphic and for $z \geq 0$, we have
  \begin{equation*}
    \frac{1}{2\pi i} \int_{(\sigma_w)} F(w) U_{\pm x}(w) z^w \, dw
    =
    H(z) + O\Big(\frac{z}{x} \sup_{\lvert s-z\rvert < \lvert z \rvert /x} \vert H'(s) \vert \Big).
  \end{equation*}
  for $x \gg 1$.
\end{lemma}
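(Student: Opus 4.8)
The plan is to identify the difference between the two integrals as a Mellin convolution that is localized to the transition region of the weight $u_{\pm x}$, and then to bound it crudely using the short support and bounded size of $u_{\pm x} - \mathbf{1}_{[0,1]}$. Throughout write $I_{\pm}(z)$ for the left-hand integral, and recall that $U_{\pm x}(w) = \int_0^\infty u_{\pm x}(t)\,t^{w-1}\,dt$ is the Mellin transform of $u_{\pm x}$, while $1/w = \int_0^1 t^{w-1}\,dt$ (valid for $\Re w = \sigma_w > 0$) is the Mellin transform of $\mathbf{1}_{[0,1]}$. Thus $U_{\pm x}(w) - 1/w$ is the Mellin transform of $u_{\pm x} - \mathbf{1}_{[0,1]}$, and the desired estimate is morally a statement about this difference of cutoffs.

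The one genuine obstacle is that $F$ decays only like $|\Im w|^{-\epsilon}$ on the line $\Re w = \sigma_w$, so none of the relevant $w$-integrals converges absolutely on its own; in particular the formal derivative $\frac{1}{2\pi i}\int_{(\sigma_w)} F(w)\,z^{w-1}\,dw$ of $H$ is only conditionally convergent, and Fubini cannot be applied naively. To circumvent this, I would first integrate by parts in the Mellin representation of $U_{\pm x}$: since $u_{\pm x}$ is smooth with compact support, the boundary terms vanish and $U_{\pm x}(w) = -w^{-1}\int_0^\infty u_{\pm x}'(t)\,t^{w}\,dt$. Substituting this into $I_{\pm}(z)$ produces the integrand $-\tfrac{F(w)}{w}\,z^w \int_0^\infty u_{\pm x}'(t)\,t^w\,dt$, in which the factor $F(w)/w \ll |\Im w|^{-1-\epsilon}$ is now absolutely integrable on the line and $u_{\pm x}'$ is supported on the transition interval $J_\pm$ of length $\asymp 1/x$. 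Fubini is therefore justified and yields the clean identity $I_\pm(z) = -\int_0^\infty u_{\pm x}'(t)\,H(zt)\,dt$, with $H$ given by its (absolutely convergent) defining integral.

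Next I would move the derivative back onto $H$. The function $H$ is defined by an absolutely convergent integral, hence continuous with $H(0)=0$, and---by shifting the $w$-contour past the pole at $w=0$ and the poles of $F$---it is analytic on $(0,\infty)$ away from the cutoff point $z=1$, where $H'$ has at most an integrable singularity (in the application of~\eqref{eq:arcsine_identity} one finds $H \propto \arcsin(\sqrt z)$, so $H' \sim (1-z)^{-1/2}$ there). Thus $t\mapsto H(zt)$ is absolutely continuous, and integrating by parts (the boundary terms vanish since $u_{\pm x}$ is eventually zero and $H(0)=0$) gives $I_\pm(z) = z\int_0^\infty u_{\pm x}(t)\,H'(zt)\,dt$. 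The fundamental theorem of calculus applied to $H(z)=\int_0^z H'(r)\,dr$, after the substitution $r = zt$, gives in the same way $H(z) = z\int_0^\infty \mathbf{1}_{[0,1]}(t)\,H'(zt)\,dt$. Subtracting, the sharp cutoff and the smooth cutoff cancel away from $J_\pm$ and we are left with
\[
  I_\pm(z) - H(z) = z\int_0^\infty \big(u_{\pm x}(t) - \mathbf{1}_{[0,1]}(t)\big)\,H'(zt)\,dt.
\]

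Finally I would bound this integral trivially. The integrand factor $u_{\pm x}(t) - \mathbf{1}_{[0,1]}(t)$ is bounded by $1$ and supported on $J_\pm$, which has length $1/x$ and lies in $[1-1/x,\,1+1/x]$; after the change of variable $s=zt$ its support lies in $\{s : |s-z| < z/x\}$. Hence
\[
  |I_\pm(z) - H(z)| \le z\int_{J_\pm} |H'(zt)|\,dt \le \frac{z}{x}\sup_{|s-z| < z/x}|H'(s)|,
\]
which is exactly the claimed error term. The analyticity of $H$ asserted in the statement is a by-product of the contour-shift description used above; the only step requiring real care is the interchange in the second paragraph, where inserting $u_{\pm x}'$ (equivalently, the factor $1/w$) is precisely what upgrades the conditionally convergent integrals to absolutely convergent ones.
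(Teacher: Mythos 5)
Your proof is correct and follows essentially the same route as the paper's: integrate by parts in the Mellin representation of $U_{\pm x}$ to gain the factor $1/w$ that makes Fubini legitimate, unfold to $-\int u_{\pm x}'(t)H(zt)\,dt$, integrate by parts back onto $H'$, and bound the leftover integral over the transition interval of length $\asymp 1/x$. The only cosmetic difference is that you subtract the fundamental-theorem-of-calculus representation of $H(z)$ to isolate the error, whereas the paper splits the integral at $t=1$ and lets the boundary term $H(0)$ cancel; both yield the identical error term.
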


\begin{proof}
We expand $U_{\pm x}$ as an integral, integrate by parts, and then swap the
order of integration. Expanding $U_{\pm x}$ and integrating by parts gives that
\begin{align}
  &\frac{1}{2 \pi i} \int_{(\sigma_w)}
  F(w) U_{\pm x}(w) z^w dw
  =
  \frac{1}{2\pi i} \int_{(\sigma_w)}
  F(w) z^w
    \int_0^\infty u_{\pm x}(t) t^w \frac{dt}{t}
  dw
  \\
  &\quad= \frac{1}{2\pi i} \int_{(\sigma_w)}
  F(w) z^w
  \Big(
    -\frac{1}{w} \int_0^{1 + 1/x}
    u_{\pm x}'(t) t^w dt
  \Big)
  dw.
\end{align}
The $w$-integrand now decays as $O(\vert \Im w \vert^{-1-\epsilon})$, which
justifies the interchange of the order of integration.  After reordering, this becomes
\begin{align}
&- \int_0^{1 + \frac{1}{x}} u'_{\pm x}(t) \Big(\frac{1}{2\pi i} \int_{(\sigma_w)} F(w) (zt)^w \frac{dw}{w}\Big) dt = - \int_0^{1 + \frac{1}{x}} u'_{\pm x}(t) H(zt) dt.
\end{align}
Cauchy's differentiation formula implies that $H$ is meromorphic. Integration by parts presents the last integral above in the form
\begin{align}
&-u_{\pm x}(t) H(zt) \bigg\vert_{t=0}^{1+\frac{1}{x}} + \int_{0}^{1+ \frac{1}{x}} u_{\pm x}(t) zH'(zt)\, dt \\
&\quad = H(0) +\int_0^1 z H'(zt)dt + \int_1^{1+\frac{1}{x}} u_{\pm x}(t)zH'(zt)dt.
\end{align}
The first two terms give $H(z)$ and the last term gives the stated error.
\end{proof}

By combining Lemma~\ref{lem:MT_eval_IPB} and~\eqref{eq:arcsine_identity}, we
see that the residue $R_{\frac{1}{2}}$ defined in~\eqref{eq:MT_residue}
satisfies
\begin{align} \label{eq:MT_residue_bounds}
R_{\frac{1}{2}} =
\frac{16}{\pi^2} \arcsin\big(\sqrt{\delta/2}\big)X^\frac{1}{2} + O\Big(\frac{X^\frac{1}{2}}{x}\Big).
\end{align}


We still need to understand the growth of the shifted integral. It will turn out
that this is not the primary obstruction to a better result, so we can use a
coarse bound.
\begin{lemma}\label{lem:MT_shifted_integral}
  With the notation as above, we have that
  \begin{equation}
  I^{0}_{\pm x}(\tfrac{1}{4} + \epsilon, \tfrac{1}{4}, X, \delta)
  \ll X^{\frac{1}{4} + \epsilon} x^{\frac{1}{2} + 2\epsilon}.
  \end{equation}
\end{lemma}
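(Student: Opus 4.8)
I need to bound the shifted integral
\[
  I^{0}_{\pm x}(\tfrac{1}{4}+\epsilon, \tfrac{1}{4}, X, \delta),
\]
which is the double integral defining $I^0_{\pm x}$ after the line of $s$-integration has been moved from $\sigma_s = 10$ to $\sigma_s = \tfrac14 + \epsilon$ (so that the simple pole at $s=\tfrac12$ from the zeta factor has already been passed and accounted for separately). I want a bound of the shape $X^{1/4+\epsilon}x^{1/2+2\epsilon}$. Let me think about what drives this.

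The integrand carries the factor $X^s$, and on the line $\Re s = \tfrac14+\epsilon$ we have $|X^s| = X^{1/4+\epsilon}$, which explains the $X$-power in the claim. Similarly $|\delta^w| \le 1$ since $\delta \le 1$ and $\Re w = \tfrac14 > 0$, so the $\delta$-dependence is harmless. The remaining task is to control the integral over the two vertical lines of the product of $U_{\pm x}(s)$, $U_{\pm x}(w)$, and the arithmetic factor
\[
  \frac{2^s\,\Gamma(s-w)\,\zeta^{(2)}(2s)\,L(2s,\chi)}{\log(1+\sqrt2)\,\Gamma(\tfrac12+s-w)\,\zeta^{(2)}(4s)}.
\]

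**Plan.** First I would record the growth of the arithmetic factor on the shifted contour. On $\Re s = \tfrac14+\epsilon$ and $\Re w = \tfrac14$, the Dirichlet $L$-factors $\zeta^{(2)}(2s)$, $L(2s,\chi)$, $\zeta^{(2)}(4s)^{-1}$ all grow at most polynomially in $|\Im s|$ by standard convexity bounds inside the critical strip, contributing $(1+|\Im s|)^{O(1)}$. The gamma ratio $\Gamma(s-w)/\Gamma(\tfrac12+s-w)$ decays like $|s-w|^{-1/2}$ by Stirling when the imaginary parts are large, and is bounded otherwise, so it does not cause growth. The factor $2^s$ is bounded on the line. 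Hence the arithmetic factor is at most $(1+|\Im s|)^{A}(1+|\Im w|)^{B}$ for some fixed $A,B$.

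The essential work is the weight functions. I would invoke the decay estimate \eqref{eq:weight_Us_bound}: for each fixed $\alpha \ge 1$,
\[
  U_{\pm x}(s) \ll_\epsilon \frac1x\Big(\frac{x}{1+|s|}\Big)^{\alpha}.
\]
This lets me trade polynomial growth of the arithmetic factor for decay, at the cost of a power of $x$. The strategy is to split each vertical line into the range $|\Im| \le x$ and $|\Im| > x$. On the bounded range I use $U_{\pm x}(s) \ll s^{-1}+O(1/x) \ll 1$, so $U_{\pm x}(s)U_{\pm x}(w)$ is $O(1)$ and the arithmetic growth $(1+|\Im s|)^A(1+|\Im w|)^B$ integrates over boxes of side $x$ to produce a power of $x$. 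On the tail $|\Im| > x$, I take $\alpha$ large enough to beat both the arithmetic polynomial growth and ensure absolute convergence; the factor $\tfrac1x (x/(1+|s|))^\alpha$ decays superpolynomially beyond $|s| \sim x$ and contributes negligibly. Assembling the $X^{1/4+\epsilon}$ prefactor with the power of $x$ from the weight functions yields the claimed bound $X^{1/4+\epsilon}x^{1/2+2\epsilon}$, the $x^{1/2}$ coming from the precise balance between the $1/x$ in each $U_{\pm x}$ and the box of $\Im$-values of size $x$ in each variable.

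**Main obstacle.** The only delicate point is bookkeeping the exact power of $x$: naively combining two weight functions, each supplying a factor $1/x$ and each permitting an $\Im$-range of length $x$, together with the polynomial growth $(1+|\Im s|)^A(1+|\Im w|)^B$ of the arithmetic factor, must be arranged so that the total is $x^{1/2+2\epsilon}$ rather than something larger. I expect the clean way is to apply \eqref{eq:weight_Us_bound} with a carefully chosen single $\alpha$ (depending on $A$, $B$, and $\epsilon$) and to perform the $s$- and $w$-integrals as nested one-dimensional integrals, absorbing all fixed polynomial exponents into the $\epsilon$. Since the lemma advertises this bound as a \emph{coarse} estimate that is not the binding constraint, I would not try to optimize, and a straightforward application of the tail/bulk split with a sufficiently large $\alpha$ suffices.
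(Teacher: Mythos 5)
Your proposal follows the same general strategy as the paper (extract $X^{1/4+\epsilon}$ from $\lvert X^s\rvert$, bound the $L$-factors by convexity and the gamma ratio by Stirling, then use the decay of $U_{\pm x}$ to get convergence), but it has a genuine gap precisely at the point you flag as the ``main obstacle'': you never compute the growth exponents $A$ and $B$, and they cannot be ``absorbed into the $\epsilon$.'' The power of $x$ in the conclusion is dictated by those exponents, so leaving them unspecified leaves the lemma unproved. Concretely, the paper computes that on $\Re s = \tfrac14+\epsilon$, $\Re w = \tfrac14$ the arithmetic factor is $\ll (1+\lvert \Im s - \Im w\rvert)^{-1/2}(1+\lvert \Im s\rvert)^{1/2+\epsilon}$: the exponent $\tfrac12$ comes from convexity applied to $\zeta^{(2)}(2s)L(2s,\chi)$ on the line $\Re(2s)=\tfrac12+2\epsilon$ (each factor contributing $\tfrac14$), together with $1/\zeta^{(2)}(4s)\ll \log\lvert\Im s\rvert$ since $\Re(4s)=1+4\epsilon$, and the decay $\lvert\Im s-\Im w\rvert^{-1/2}$ of $\Gamma(s-w)/\Gamma(\tfrac12+s-w)$. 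With this in hand, one takes $\alpha=\tfrac32+\epsilon$ in \eqref{eq:weight_Us_bound} for the $s$-variable and $\alpha=1+\epsilon$ for the $w$-variable, so that $U_{\pm x}(s)\ll x^{1/2+\epsilon}(1+\lvert s\rvert)^{-3/2-\epsilon}$ and $U_{\pm x}(w)\ll x^{\epsilon}(1+\lvert w\rvert)^{-1-\epsilon}$; the double integral then converges absolutely and the total $x$-power is $x^{1/2+2\epsilon}$. Thus the exponent $\tfrac12$ in the lemma \emph{is} the convexity exponent of the $L$-factors. It is not, as you assert, produced by ``the precise balance between the $1/x$ in each $U_{\pm x}$ and the box of $\Im$-values of size $x$'': if the arithmetic factor were bounded, that balance would yield only $x^{\epsilon}$ (logarithms), and if its growth exponent were some other $A$, the lemma would read $x^{A+2\epsilon}$ and the exponent $\tfrac38$ in Theorem~\ref{thm:ratl_points} would change. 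An argument that cannot distinguish these cases cannot certify the stated bound.

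A second, smaller defect: in your bulk range $\lvert\Im s\rvert,\lvert\Im w\rvert\le x$ you bound $U_{\pm x}\ll 1$, which is too lossy --- integrating $(1+\lvert\Im s\rvert)^{1/2+\epsilon}$ against a bounded weight over an interval of length $x$ already produces $x^{3/2+\epsilon}$ from the $s$-variable alone. You need the uniform bound $U_{\pm x}(s)\ll (1+\lvert s\rvert)^{-1}$ (i.e.\ \eqref{eq:weight_Us_bound} with $\alpha=1$) in the bulk, which turns that contribution into $x^{1/2+\epsilon}$. With that correction and the explicit exponents above, your bulk/tail split does deliver the stated bound and becomes a mild rearrangement of the paper's argument, which instead applies \eqref{eq:weight_Us_bound} with a single well-chosen $\alpha$ per variable on the whole line.
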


\begin{proof}

We bound the integrand in absolute value, approximate the gamma functions by
Stirling's formula, and bound the $L$-functions by the convexity bound. We also
note the classical inequality $1/\zeta(1 + \epsilon + it) \ll \log t$.

We can extract the factor $X^{1/4 + \epsilon}$ immediately. Within the integral,
the exponential growth from the gamma functions cancels out. The total
polynomial growth of the gamma functions and $L$-functions is of size $(1 +
\lvert \Im s - \Im w \rvert)^{-1/2} (1 + \lvert \Im s \rvert)^{1/2 + \epsilon}$.
Absolute convergence of the integral follows from
$U_{\pm x}(s) \ll x^{1/2 + \epsilon}(1 + \lvert s \rvert)^{-(3/2 +
\epsilon)}$ and $U_{\pm x}(w) \ll x^{\epsilon}(1 + \lvert w \rvert)^{-(1 +
\epsilon)}$ as given in~\eqref{eq:weight_Us_bound}. Combining gives the proof.
\end{proof}

We thus have that
\begin{equation}
  I^{0}_{\pm x}(10, \tfrac{1}{4}, X, \delta)
  =
  R_{\frac{1}{2}}
  +
  I^{0}_{\pm x}(\tfrac{1}{4} + \epsilon, \tfrac{1}{4}, X, \delta),
\end{equation}
where $R_{1/2}$ is the residue in~\eqref{eq:MT_residue}. Bounds for the residue are
given in~\eqref{eq:MT_residue_bounds} and the
size of the shifted integral is bounded in Lemma~\ref{lem:MT_shifted_integral}.
Assembling these bounds together proves the following.

\begin{proposition}\label{prop:first_I_bound}
  For any fixed $\epsilon > 0$ and $\delta \in [0,1]$, we have
  \begin{equation}
  \begin{split}
    I^{0}_{\pm x}(10, \tfrac{1}{4}, X, \delta)
    &=
    \frac{16}{\pi^2} \arcsin(\sqrt{\delta / 2}) X^{\frac{1}{2}}
    + O\Big(\frac{X^{\frac{1}{2}}}{x}\Big)
    + O(X^{\frac{1}{4} + \epsilon} x^{\frac{1}{2} + \epsilon}).
  \end{split}
  \end{equation}
\end{proposition}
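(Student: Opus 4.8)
The plan is to realize $I^{0}_{\pm x}(10,\tfrac14,X,\delta)$ through the contour-shift decomposition already set up, namely $I^{0}_{\pm x}(10,\tfrac14,X,\delta) = R_{1/2} + I^{0}_{\pm x}(\tfrac14+\epsilon,\tfrac14,X,\delta)$, and then to insert the two estimates established above. Shifting the line of $s$-integration from $\sigma_s=10$ down to $\sigma_s=\tfrac14+\epsilon$ is legitimate because the arbitrary polynomial decay of $U_{\pm x}(s)$ in vertical strips from~\eqref{eq:weight_Us_bound} dominates the polynomial growth of the gamma and $L$-factors, so the connecting horizontal segments vanish. The only singularity crossed lies at $s=\tfrac12$, where $\zeta^{(2)}(2s)$ inherits the simple pole of $\zeta(2s)$; all other factors are holomorphic there, so $R_{1/2}$ reduces to the single $w$-integral in~\eqref{eq:MT_residue}.

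First I would pin down the main term by evaluating $R_{1/2}$. The factor $U_{\pm x}(\tfrac12)=2+O(1/x)$ follows from the expansion $U_{\pm x}(s)=s^{-1}+O(1/x)$ recorded in Section~\ref{sec:weight_functions}, and substituting $U_{\pm x}(w)=1/w+O(1/x)$ reduces the leading part of the $w$-integral to the Mellin identity~\eqref{eq:arcsine_identity}. Bookkeeping of the constants---the prefactor $4/\pi^{3/2}$, the value $U_{\pm x}(\tfrac12)\approx 2$, and the arcsine integral $\tfrac{2}{\sqrt\pi}\arcsin(\sqrt{\delta/2})$---collapses to the main term $\tfrac{16}{\pi^2}\arcsin(\sqrt{\delta/2})X^{1/2}$, as recorded in~\eqref{eq:MT_residue_bounds}.

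The delicate point, and what I expect to be the main obstacle, is honestly controlling the $O(1/x)$ error in $U_{\pm x}(w)=1/w+O(1/x)$: this error is integrated against the slowly decaying kernel $\Gamma(\tfrac12-w)/\Gamma(1-w)$, so a crude pointwise estimate is inadequate. This is precisely what Lemma~\ref{lem:MT_eval_IPB} resolves. Applying it with $F(w)=\Gamma(\tfrac12-w)/\Gamma(1-w)$, which satisfies the hypothesis $F(w)\ll|\Im w|^{-\epsilon}$ on $\Re w=\tfrac14$, and with $z=\delta/2$, the integration-by-parts bound $O\big(\tfrac{z}{x}\sup|H'|\big)$ replaces the naive error; since here $H$ is the arcsine transform, this contributes only $O(X^{1/2}/x)$.

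Finally I would fold in the shifted integral via Lemma~\ref{lem:MT_shifted_integral}, which bounds $I^{0}_{\pm x}(\tfrac14+\epsilon,\tfrac14,X,\delta)\ll X^{1/4+\epsilon}x^{1/2+2\epsilon}$; relabeling $2\epsilon$ as $\epsilon$ and summing with the residue estimate yields the claimed asymptotic. The assembly itself is routine; all the real work lies in the residue evaluation and the clean treatment of the weight-function error furnished by Lemma~\ref{lem:MT_eval_IPB}.
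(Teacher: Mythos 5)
Your proposal is correct and follows the paper's own proof essentially verbatim: the same decomposition into the residue $R_{1/2}$ at $s=\tfrac12$ plus the shifted integral, the same evaluation of the residue via the Mellin identity~\eqref{eq:arcsine_identity} together with Lemma~\ref{lem:MT_eval_IPB} to control the weight-function error, and the same appeal to Lemma~\ref{lem:MT_shifted_integral} for the shifted piece. The constant bookkeeping ($4/\pi^{3/2}$, $U_{\pm x}(\tfrac12)\approx 2$, and $\tfrac{2}{\sqrt\pi}\arcsin(\sqrt{\delta/2})$ combining to $\tfrac{16}{\pi^2}$) and the error analysis both match the paper.
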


\subsection{Discrete term}\label{ssec:discrete_term_ratl_points}

Within the integrals~\eqref{eq:model_transform}, the term coming from the
discrete sum in the decomposition~\eqref{eq:in_thm_double_ds_decomposition}
takes the form
\begin{equation}
  \begin{split}
  I^{\mathrm{spec}}_{\pm x}&(\sigma_s, \sigma_w, X, \delta)
  \\
  &=
  \frac{1}{(2 \pi i)^2}
  \int_{(\sigma_w)}
  \int_{(\sigma_s)}
  \sum_{m \neq 0}
  \Big(
  U_{\pm x}(s) U_{\pm x}(w) X^s \delta^w
  2^{3s -3w} (1-2^{-2s})
  \\
  &\qquad\times
  \frac{\Gamma(s - w - it_m) \Gamma(s - w + it_m)}
       {\Gamma(2s -2 w)\log(1+\sqrt{2})}
 \frac{(-1)^mL(2s,\eta^{2m})}{\zeta^{(2)}(4s)}  \Big) ds \, dw,
  \end{split}
\end{equation}
where initially $\sigma_w = \frac{1}{4}$ and $\sigma_s = 10$.
Since $L(2s,\eta^{2m})$ is entire for $m \neq 0$, we have
\begin{equation}
	I_{\pm x}^{\mathrm{spec}}(10,\tfrac{1}{4},X,\delta)
	= I_{\pm x}^{\mathrm{spec}}(\tfrac{1}{4}+\epsilon,\tfrac{1}{4},X,\delta).
\end{equation}
To justify this contour shift and then bound $I_{\pm x}^{\mathrm{spec}}(\frac{1}{4}+\epsilon,\frac{1}{4},X,\delta)$,
we require a technical lemma to handle growth in the $m$-sum.

\begin{lemma} \label{lem:gamma_ratio_lemma}
On the lines $\Re z = x \in (0,\frac{1}{2})$ and $\Re s = \frac{1}{2}+\epsilon$, we have
\begin{align} \label{eq:gamma_ratio_lemma}
\sum_{m \neq 0} \bigg\vert \frac{\Gamma(z+it_m)\Gamma(z-it_m)L(s,\eta^{2m})}{\Gamma(2z)}\bigg\vert \ll (1+\vert z \vert + \vert z \vert^\frac{1}{2} \vert s \vert^{\frac{1}{2}}).
\end{align}
\end{lemma}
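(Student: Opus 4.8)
The plan is to combine the convexity bound for the Hecke $L$-function $L(s,\eta^{2m})$ with Stirling's approximation for the gamma ratio, exploiting that the gamma factors localize the $m$-sum. A useful first observation is that the gamma ratio is a Beta function,
\[
  \frac{\Gamma(z+it_m)\Gamma(z-it_m)}{\Gamma(2z)} = B(z+it_m,\,z-it_m),
\]
since $(z+it_m)+(z-it_m)=2z$. Because $\eta^{2m}$ is a Hecke Gr\"ossencharacter of $\mathbb{Q}(\sqrt2)$ whose archimedean parameter has size $\asymp t_m$, the degree-two $L$-function $L(s,\eta^{2m})$ has analytic conductor $\ll (1+|s|+|t_m|)^2$ on the line $\Re s = \tfrac12+\epsilon$, so the convexity bound gives $L(s,\eta^{2m}) \ll (1+|s|+|t_m|)^{\frac12+\epsilon}$. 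When summing I will split this as $(1+|s|)^{\frac12}+|t_m|^{\frac12}$, up to the harmless $\epsilon$-power.

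Next I would estimate the Beta factor by Stirling. Writing $z=x+i\tau$ and using the uniform estimate $|\Gamma(x+iT)| \asymp (1+|T|)^{x-\frac12} e^{-\frac{\pi}{2}|T|}$, valid for $x$ in a compact subset of $(0,\infty)$ and all real $T$, I obtain
\[
  \left| \frac{\Gamma(z+it_m)\Gamma(z-it_m)}{\Gamma(2z)} \right|
  \ll
  \frac{(1+|\tau+t_m|)^{x-\frac12}(1+|\tau-t_m|)^{x-\frac12}}{(1+2|\tau|)^{2x-\frac12}}\,
  e^{-\frac{\pi}{2}(|\tau+t_m|+|\tau-t_m|)+\pi|\tau|}.
\]
The identity $|\tau+t_m|+|\tau-t_m| = 2\max(|\tau|,|t_m|)$ collapses the exponential to $e^{-\pi\max(0,\,|t_m|-|\tau|)}$, so only the terms with $|t_m| \lesssim |\tau| \le |z|$ escape exponential suppression. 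As the $t_m$ are equally spaced with constant gap $\tfrac{\pi}{2\log(1+\sqrt2)}$, this leaves $O(|z|)$ relevant terms together with an exponentially small tail from $|t_m| \gg |\tau|$ (one may assume $\tau\ge0$ and restrict to $m\ge1$ by the evenness of the summand).

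It then remains to sum the polynomial bounds over the localized range. For $|t_m|\le\tfrac12|\tau|$ the Beta factor is $\asymp (1+|\tau|)^{-\frac12}$, and summing this against the split $L$-bound produces $(1+|\tau|)^{-\frac12}\big((1+|s|)^{\frac12}|z| + |z|^{\frac32}\big) \ll |z|^{\frac12}|s|^{\frac12} + |z|$, which lies within the claimed bound; the case $|\tau|=O(1)$ (finitely many surviving terms) and the exponentially small tail are handled trivially. I expect the main obstacle to be the transition band $|t_m|\approx|\tau|$, where one of $z\pm it_m$ has small imaginary part and $(1+|\tau\mp t_m|)^{x-\frac12}$ carries a negative exponent. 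Here I would use that $x$ stays bounded away from $0$, so $\Gamma(x+iT)$ has no pole as $T\to0$ and the apparent singularity is spurious; I would then bound the band by comparing the equally spaced sum $\sum_k (1+k)^{x-\frac12}$ with an integral, which converges near the diagonal and grows like $|\tau|^{x+\frac12}$ at the top, retaining the residual factor $e^{-\pi(t_m-|\tau|)}$ for $t_m>|\tau|$ to guarantee convergence. Combining the three regimes yields the stated estimate, with the $\epsilon$ from convexity absorbed into the implied constant.
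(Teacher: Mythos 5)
Your proposal is correct and follows essentially the same route as the paper's proof: the convexity bound for $L(s,\eta^{2m})$, uniform Stirling estimates for the gamma factors, the identity $\vert \tau+t_m\vert+\vert \tau-t_m\vert = 2\max(\vert \tau\vert,\vert t_m\vert)$ to reduce the exponential factor to $e^{-\pi\max(0,\vert t_m\vert-\vert\tau\vert)}$ and localize the sum to $\vert t_m\vert \lesssim \vert \tau \vert$, and integral comparison in the bulk, transition, and tail regimes. The only cosmetic differences are your Beta-function remark and your use of the slightly weaker conductor bound $(1+\vert s\vert+\vert t_m\vert)^{\frac{1}{2}+\epsilon}$ in place of the paper's $(1+\vert s+it_m\vert)^{\frac{1}{4}}(1+\vert s-it_m\vert)^{\frac{1}{4}}$, both of which suffice here.
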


\begin{proof} The convexity estimate $L(s,\eta^{2m}) \ll (1+\vert s + it_m \vert)^{\frac{1}{4}}(1+\vert s - it_m \vert)^{\frac{1}{4}}$ on the line $\Re s = \frac{1}{2}+\epsilon$ and uniform estimates for Stirling's approximation in the right half-plane suffices to bound the sum in~\eqref{eq:gamma_ratio_lemma} by
\[\sum_{m \neq 0} \frac{\vert x+iy\vert^{\frac{1}{2}-2x} (1+\vert s \vert^{\frac{1}{2}} + \vert t_m \vert^{\frac{1}{2}})}{(\vert x+iy+it_m \vert \cdot \vert x+iy-it_m \vert)^{\frac{1}{2}-x}} \mathrm{exp}\big( -\pi \max(\vert t_m \vert, \vert y \vert) + \pi \vert y \vert\big).\]
Since $\vert x+iy \pm it \vert \geq \max(x,\vert y \pm t \vert)$, we have
\begin{align*}
&\frac{\vert x+iy\vert^{\frac{1}{2}-2x}}{(\vert x+iy+it_m \vert \cdot \vert x+iy-it_m \vert)^{\frac{1}{2}-x}} \\
&\qquad \qquad\qquad \ll \vert x+iy\vert^{\frac{1}{2}-2x} \min\bigg(\frac{1}{\vert x \vert^{1-2x}}, \frac{1}{\vert y+t_m \vert^{\frac{1}{2}-x} \vert y-t_m \vert^{\frac{1}{2}-x}}\bigg).
\end{align*}
The first term in the minimum is $O(1)$ and the latter may be bounded in cases depending on the signs and relative sizes of $t_m$ and $y$.

The contribution in the case $0<t_m < y$ is
\[\ll \sum_{0<t_m< y} \frac{\vert x+iy\vert^{\frac{1}{2}-2x}(1+\vert s \vert^{\frac{1}{2}} + \vert t_m \vert^{\frac{1}{2}})}{\vert 1+y \vert^{\frac{1}{2}-x} \vert y-t_m \vert^{\frac{1}{2}-x}}
\ll 1+ y+y^{\frac{1}{2}} \vert s \vert^{\frac{1}{2}},\]
as can be seen via integral comparison. The contribution from $0<y<t_m$ experiences exponential decay and is smaller, no larger than
\[\sum_{y<t_m} \frac{\vert x+iy\vert^{\frac{1}{2}-2x}(1+\vert s \vert^{\frac{1}{2}} + \vert t_m \vert^\frac{1}{2})}{\vert 1+t_m \vert^{\frac{1}{2}-x} \vert t_m-y\vert^{\frac{1}{2}-x}}e^{-\pi (t_m-y)}\ll 1 + y^{\frac{1}{2}-x} + \frac{\vert s \vert^{\frac{1}{2}}}{(1+y)^{x}}.\]
The cases in which $y$ and $t_m$ differ in sign are analogous.
\end{proof}

The bound in Lemma~\ref{lem:gamma_ratio_lemma} implies that
\begin{align} \label{eq:I-spec-bound}
	I_{\pm x}^{\mathrm{spec}}(\tfrac{1}{4}+\epsilon, \tfrac{1}{4},X,\delta)
	\ll X^{\frac{1}{4}+\epsilon}\int_{(\frac{1}{4})}\int_{(\frac{1}{4}+\epsilon)}
	&	\vert U_{\pm x}(s) U_{\pm x}(w) \vert \\
	\times & (1+\vert s-w\vert + \vert s \vert^{\frac{1}{2}} \vert s-w\vert^{\frac{1}{2}}) \, ds dw.
\end{align}
In the region $\vert s \vert > \vert w \vert$, the bounds
$U_{\pm x}(s) \ll x^{1+\epsilon}/\vert s \vert^{2+\epsilon}$ and $U_{\pm x}(w) \ll x^\epsilon/\vert w \vert^{1+\epsilon}$ ensure convergence of the integral. In the region $\vert s \vert < \vert w \vert$, we instead apply $U_{\pm x}(s) \ll x^{\epsilon}/\vert s \vert^{1+\epsilon}$ and $U_{\pm x}(w) \ll x^{1+\epsilon}/\vert w \vert^{2+\epsilon}$.
By applying these bounds to~\eqref{eq:I-spec-bound}, we produce the following proposition.

\begin{proposition}\label{prop:discrete_I_bound}
For any $\epsilon > 0$ and $\delta \in [0, 1]$, we have
\begin{equation}
  I_{\pm x}^{\mathrm{spec}}(10, \tfrac{1}{4}, X, \delta)
  \ll_{\epsilon} X^{\frac{1}{4}+\epsilon} x^{1+\epsilon}.
\end{equation}
\end{proposition}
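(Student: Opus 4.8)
The plan is to begin from the explicit form of $I_{\pm x}^{\mathrm{spec}}(10,\tfrac14,X,\delta)$ displayed above and reduce the desired bound to an absolutely convergent double integral controlled by the weight-function estimates~\eqref{eq:weight_Us_bound}. First I would shift the line of $s$-integration from $\sigma_s=10$ to $\sigma_s=\tfrac14+\epsilon$. Since $L(2s,\eta^{2m})$ is entire for every $m\neq 0$, the ratio of gamma functions is holomorphic across the intervening strip, and the factors $2^{3s-3w}(1-2^{-2s})$ and $1/\zeta^{(2)}(4s)$ contribute no poles; the rapid vertical decay of $U_{\pm x}(s)$ annihilates the horizontal connecting segments, so the shift may be carried out term by term in $m$ with no residue crossed, giving $I_{\pm x}^{\mathrm{spec}}(10,\tfrac14,X,\delta)=I_{\pm x}^{\mathrm{spec}}(\tfrac14+\epsilon,\tfrac14,X,\delta)$.

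Next I would bound the integrand in absolute value on the lines $\Re s=\tfrac14+\epsilon$ and $\Re w=\tfrac14$. The factor $\lvert X^s\rvert$ contributes exactly $X^{1/4+\epsilon}$, which I pull outside; since $\delta\in[0,1]$ and $\Re w=\tfrac14>0$ we have $\lvert\delta^w\rvert\le 1$, the exponential factors $2^{3s-3w}(1-2^{-2s})$ are $O(1)$ on these lines, the constant $1/\log(1+\sqrt2)$ is harmless, and $1/\zeta^{(2)}(4s)$ grows at most logarithmically and is absorbed into $X^\epsilon$. To handle the sum over $m$, I would bring it inside and invoke Lemma~\ref{lem:gamma_ratio_lemma} with $z=s-w$, whose real part is $\epsilon\in(0,\tfrac12)$, and with spectral parameter $2s$, whose real part is $\tfrac12+2\epsilon$; this also legitimizes the interchange of the $m$-sum with the integrals on the new line. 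The lemma yields
\[
\sum_{m\neq 0}\left\lvert\frac{\Gamma(s-w+it_m)\Gamma(s-w-it_m)\,L(2s,\eta^{2m})}{\Gamma(2s-2w)}\right\rvert
\ll 1+\lvert s-w\rvert+\lvert s\rvert^{1/2}\lvert s-w\rvert^{1/2},
\]
so that $I_{\pm x}^{\mathrm{spec}}(\tfrac14+\epsilon,\tfrac14,X,\delta)$ is bounded by $X^{1/4+\epsilon}$ times the double integral of $\lvert U_{\pm x}(s)U_{\pm x}(w)\rvert$ against this polynomial growth factor.

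Finally I would establish convergence of that double integral and track its $x$-dependence by splitting the $(s,w)$-plane into $\lvert s\rvert>\lvert w\rvert$ and $\lvert s\rvert<\lvert w\rvert$. The one genuinely delicate point is the mixed factor $\lvert s\rvert^{1/2}\lvert s-w\rvert^{1/2}$: in the first region $\lvert s-w\rvert\ll\lvert s\rvert$ bounds the growth by $\ll\lvert s\rvert$, while in the second region $\lvert s-w\rvert\ll\lvert w\rvert$ bounds it by $\ll\lvert w\rvert$, so the growth is always absorbed by the dominant variable. In each region I then apply~\eqref{eq:weight_Us_bound} with tailored exponents $\alpha$: where $\lvert s\rvert$ carries the growth I take $U_{\pm x}(s)\ll x^{1+\epsilon}/\lvert s\rvert^{2+\epsilon}$ and $U_{\pm x}(w)\ll x^{\epsilon}/\lvert w\rvert^{1+\epsilon}$, and symmetrically in the other region, so that exactly one weight supplies $x^{1+\epsilon}$ and the other $x^{\epsilon}$ while the surplus decay in the denominators dominates the growth factor and renders both integrals absolutely convergent. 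Multiplying the resulting $x^{1+\epsilon}$ by the extracted $X^{1/4+\epsilon}$ and relabeling $\epsilon$ gives $I_{\pm x}^{\mathrm{spec}}(10,\tfrac14,X,\delta)\ll_\epsilon X^{1/4+\epsilon}x^{1+\epsilon}$. The main obstacle in the whole argument is really Lemma~\ref{lem:gamma_ratio_lemma} itself — the uniform summation over the spectral parameters $t_m$ through Stirling's approximation and the convexity bound — which I treat as already established; granting it, the remaining work is the careful region-wise allocation of the $x$-powers so that the total stays at $x^{1+\epsilon}$ while convergence is preserved.
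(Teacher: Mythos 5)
Your proposal is correct and follows the paper's own proof essentially step for step: the shift to $\Re s = \tfrac14+\epsilon$ justified by the entirety of $L(2s,\eta^{2m})$ for $m\neq 0$, the application of Lemma~\ref{lem:gamma_ratio_lemma} with $z=s-w$ to reduce to the growth factor $1+\lvert s-w\rvert+\lvert s\rvert^{1/2}\lvert s-w\rvert^{1/2}$, and the split into $\lvert s\rvert>\lvert w\rvert$ and $\lvert s\rvert<\lvert w\rvert$ with the bounds $U_{\pm x}(s)\ll x^{1+\epsilon}/\lvert s\rvert^{2+\epsilon}$, $U_{\pm x}(w)\ll x^{\epsilon}/\lvert w\rvert^{1+\epsilon}$ (and symmetrically) are exactly the paper's argument. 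The only cosmetic imprecision is saying the logarithmic growth of $1/\zeta^{(2)}(4s)$ is ``absorbed into $X^\epsilon$''; it grows in $\lvert\Im s\rvert$, not $X$, and is instead absorbed by the surplus polynomial decay of the weight functions.
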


\begin{remark}
Lemma~\ref{lem:gamma_ratio_lemma} and thereafter Proposition~\ref{prop:discrete_I_bound}
may be improved by applying subconvexity results for the Hecke $L$-functions
$L(s,\eta^{2m})$ (such as~\cite{jutila2005uniform} or~\cite{Wu19}).
The estimate $L(s,\eta^{2m}) \ll (1+\vert s +it_m\vert)^\alpha(1+\vert s-it_m \vert)^\alpha$
on the line $\Re s = \frac{1}{2}+\epsilon$ implies that $I_{\pm x}^\mathrm{spec}$ in
Proposition~\ref{prop:discrete_I_bound} is $O(X^{1/4+\epsilon} x^{1/2+2\alpha+\epsilon})$ and
that Theorem~\ref{thm:ratl_points} holds in the form
\[
\tfrac{1}{8}S(X,\delta) = \frac{2}{\pi^2} \arcsin(\sqrt{\delta/2}) X^{\frac{1}{2}}
+ O_\epsilon\big(X^{\frac{1}{2}-\frac{1}{6+8\alpha}+\epsilon}\big).
\]
Under the Lindel\"of Hypothesis, we have $\alpha = 0$.
The same improvements hold in the conclusions of Theorems~\ref{thm:boundedmax},~\ref{thm:naturalXY}, and~\ref{thm:firsttwoX}.
\end{remark}

\section{Further Applications}\label{sec:further_applications}

This section contains three further applications of the meromorphic description of
$\mathcal{D}(s, w)$. In Section~\ref{ssec:boundedmax}, we count primitive APs with
bounded maximum; in Section~\ref{ssec:naturalXY}, we count primitive APs with
individually bounded first and second terms; and in Section~\ref{ssec:firsttwoX},
we count primitive APs whose first two terms have bounded product. Many of the
technical details are similar to those in Section~\ref{sec:applications}, so we
describe only those portions that lead to the main terms and dominant error terms
in the asymptotics.

\subsection{APs with bounded maximum}
\label{ssec:boundedmax}

We count the number of primitive APs of squares $\{h,m,2m-h\}$ with bounded maximum,
In particular, we produce asymptotics for sums of the form
\[T(X) := \sum_{h \leq X} \sideset{}{'}\sum_{m \leq h} r_1(h)r_1(m)r_1(2m-h),\]
in which the prime indicates the restriction to $(m,h)=1$.  Note that such APs are
arranged with $\{h,m,2m-h\}$ decreasing; this does not affect our results but simplifies
notation.

Our primary theorem in this section is the following.

\begin{theorem} \label{thm:boundedmax}
The number of primitive APs of squares with largest term at most $X$ is
\[\tfrac{1}{8}T(X) = \frac{\sqrt{2}}{\pi^2} \log(1+\sqrt{2})X^{\frac{1}{2}} + O_\epsilon \big(X^{\frac{3}{8}+\epsilon}\big)\]
for any $\epsilon > 0$.
\end{theorem}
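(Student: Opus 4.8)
The plan is to mirror the proof of Theorem~\ref{thm:ratl_points} from Section~\ref{sec:applications}, interchanging the roles of the two variables to accommodate the nested constraints $m \leq h \leq X$. First I would introduce the smoothed sums
\[
  T_{\pm x}(X) := \sideset{}{'}\sum_{m,h \geq 1} r_1(h)r_1(m)r_1(2m-h)\, u_{\pm x}\big(\tfrac{h}{X}\big)\, u_{\pm x}\big(\tfrac{m}{h}\big),
\]
which sandwich $T(X)$ as $T_{-x}(X) \leq T(X) \leq T_{+x}(X)$ since every $r_1(\cdot) \geq 0$ and $r_1(2m-h)$ already enforces $2m \geq h$. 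Applying Mellin inversion to each weight and collecting exponents (the factor $(m/h)^{-s}$ contributes $h^{s}$, so the $h$-exponent becomes $s-w$ and the $m$-exponent $-s$, with $X^{w}$ external) recognizes $T_{\pm x}(X)$ as the integral transform
\[
  T_{\pm x}(X) = \frac{1}{(2\pi i)^2} \int_{(\sigma_w)}\!\int_{(\sigma_s)} \mathcal{D}(s, w-s)\, U_{\pm x}(s)\, U_{\pm x}(w)\, X^{w}\, ds\, dw,
\]
the analogue of~\eqref{eq:model_transform}, with $\sigma_s,\sigma_w$ taken large enough to sit inside the region of absolute convergence.

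Next I would split the integrand by Theorem~\ref{thm:double_ds_decomposition}: the substitution $w \mapsto w-s$ sends $L(2s+2w,\eta^{2m}) \mapsto L(2w,\eta^{2m})$ and leaves the factors $\Gamma(s\pm it_m)$, $\Gamma(2s)$, and $2^{3s}$ untouched, so~\eqref{eq:in_thm_double_ds_decomposition} separates into the $m=0$ piece $I^{0}_{\pm x}$, carrying the Dedekind zeta $L(2w,\eta^0)=\zeta(2w)L(2w,\chi)$, and the discrete remainder $I^{\mathrm{spec}}_{\pm x}$. All of the $X$-dependence now resides in $X^{w}$, so shifting the outer $w$-contour left past $w=\tfrac12$ crosses the simple pole of $\zeta(2w)$ (of residue $\tfrac12$) and yields the $X^{1/2}$ main term; $L(2w,\chi)$ is entire, so this is the only pole crossed. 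After extracting the residue, the leading behaviour is governed by the remaining $s$-integral
\[
  \frac{1}{2\pi i}\int_{(\sigma_s)} U_{\pm x}(s)\,\frac{2^{3s}\Gamma(s)^2}{\Gamma(2s)}\, ds,
\]
which plays the role of~\eqref{eq:arcsine_identity}. Using $\Gamma(s)^2/\Gamma(2s) = 2^{2-2s}\int_0^1 (1-t^2)^{s-1}\,dt$ and Perron's formula, the $t$-integral truncates at $t=1/\sqrt2$ and evaluates to $4\log(1+\sqrt2)$ via $\int_0^{1/\sqrt2}\frac{dt}{1-t^2}=\log(1+\sqrt2)$; invoking Lemma~\ref{lem:MT_eval_IPB} to replace $U_{\pm x}(s)$ by $1/s$ costs only $O(X^{1/2}/x)$. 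Assembling this with $U_{\pm x}(\tfrac12)=2+O(1/x)$, the values $L(1,\chi)=\log(1+\sqrt2)/\sqrt2$ and $\zeta^{(2)}(2)=\pi^2/8$, and bounding the shifted double integral crudely as in Lemma~\ref{lem:MT_shifted_integral}, gives $I^{0}_{\pm x} = \tfrac{8\sqrt2}{\pi^2}\log(1+\sqrt2)X^{1/2} + O(X^{1/2}/x) + O(X^{1/4+\epsilon}x^{1/2+\epsilon})$.

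For the discrete part the crucial feature is that $L(2w,\eta^{2m})$ is entire for $m\neq0$ (it is the Hecke $L$-function~\eqref{eq:heckeL} of a nontrivial character), so shifting the $w$-contour to $\Re(w)=\tfrac14$ crosses no pole and immediately gains the factor $X^{1/4}$. I would then move absolute values inside the $m$-sum and apply Lemma~\ref{lem:gamma_ratio_lemma}, now with $z=s$ and spectral argument $2w$ on the line $\Re(2w)=\tfrac12+\epsilon$, together with the weight estimates~\eqref{eq:weight_Us_bound}, exactly as in Proposition~\ref{prop:discrete_I_bound}, obtaining $I^{\mathrm{spec}}_{\pm x}\ll_\epsilon X^{1/4+\epsilon}x^{1+\epsilon}$. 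Combining both pieces gives $T_{\pm x}(X) = \tfrac{8\sqrt2}{\pi^2}\log(1+\sqrt2)X^{1/2} + O\big(X^{1/2}/x + X^{1/4+\epsilon}x^{1+\epsilon}\big)$; choosing $x=X^{1/8}$ balances the errors at $X^{3/8+\epsilon}$, and the sandwich $T_{-x}\leq T\leq T_{+x}$ together with the factor $\tfrac18$ delivers the theorem.

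As the paper notes, these later applications are ``similar, but simpler,'' and indeed the growth of the spectral sum (Lemma~\ref{lem:gamma_ratio_lemma}), the control of the $O(1/x)$ weight errors (Lemma~\ref{lem:MT_eval_IPB}), and the bound on the shifted integral all transfer essentially verbatim from Section~\ref{sec:applications}. The main obstacle specific to this theorem is therefore the evaluation of the leading constant: one must identify the correct Mellin-inversion identity replacing~\eqref{eq:arcsine_identity}, recognize $\Gamma(s)^2/\Gamma(2s)$ as a Beta integral, see that the Perron step truncates the $t$-integral at $t=1/\sqrt2$, and verify that the resulting $\operatorname{arctanh}$ evaluation returns exactly $\log(1+\sqrt2)$ so that the main term matches the stated asymptotic. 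Tracking the exact powers of $2$ and $\pi$ through the substitution $w\mapsto w-s$ is the most error-prone part of the argument.
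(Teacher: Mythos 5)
Your proposal is correct and follows essentially the same route as the paper: the same smoothed sums $T_{\pm x}$, the same transform $T_{\pm x}(X)=\frac{1}{(2\pi i)^2}\int\int \mathcal{D}(s,w-s)U_{\pm x}(s)U_{\pm x}(w)X^w\,ds\,dw$, the same split into the $m=0$ and $m\neq 0$ parts of~\eqref{eq:in_thm_double_ds_decomposition}, the residue of $\zeta^{(2)}(2w)L(2w,\chi)$ at $w=\tfrac12$ for the main term, Lemmas~\ref{lem:MT_eval_IPB} and~\ref{lem:gamma_ratio_lemma} for the error control, and the choice $x=X^{1/8}$. The only (cosmetic) difference is that you derive the key evaluation $\frac{1}{2\pi i}\int \frac{2^{3s}\Gamma(s)^2}{\Gamma(2s)}\frac{ds}{s}=4\log(1+\sqrt 2)$ from the Beta-integral representation plus Perron truncation at $t=1/\sqrt 2$, whereas the paper quotes the closed-form Mellin pair $4\,\mathrm{arctanh}\big(\sqrt{1-\tfrac{1}{2z}}\big)$ and sets $z=1$ --- the same computation in different clothing.
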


To prove this theorem, we define $T_{\pm x}$ as
\[T_{\pm x}(X) := \sideset{}{'}\sum_{m,h \geq 1} r_1(h)r_1(m)r_1(2m-h) u_{\pm x}(\tfrac{m}{h}) u_{\pm x}(\tfrac{h}{X})\]
and recognize these sums as the integral transforms
\begin{align} \label{eq:boundedmax_model_transform}
T_{\pm x}(X) = \frac{1}{(2\pi i)^2} \int_{(\sigma_w)}\int_{(\sigma_s)} \mathcal{D}(s,w-s) X^w U_{\pm x}(s) U_{\pm x}(w) \, dsdw
\end{align}
for sufficiently large $\sigma_w$ and $\sigma_s$. We take $\sigma_w =10$ and $\sigma_s = \frac{1}{4}$ to begin.

The proof of Theorem~\ref{thm:boundedmax} is very similar to the proof of
Theorem~\ref{thm:ratl_points}. In particular, our analysis once again follows
the decomposition of $\mathcal{D}(s,w)$ given in Theorem~\ref{thm:double_ds_decomposition}.
The main term in Theorem~\ref{thm:boundedmax} comes from the $m=0$ term in the
expansion~\eqref{eq:in_thm_double_ds_decomposition} and the error term results
from balancing against estimates in the $m\neq 0$ component
from~\eqref{eq:in_thm_double_ds_decomposition}.  We therefore sketch only these
parts of the proof.

\subsection*{Principal term}

The contribution of the $m=0$ term in the expansion~\eqref{eq:in_thm_double_ds_decomposition}
towards $T_{\pm x}(X)$ takes the form
\begin{align}
&I_{\pm x}^0(\sigma_s,\sigma_w,X)  = \\
&\quad  \frac{1}{(2\pi i)^2} \!\int_{(\sigma_w)} \!\int_{(\sigma_s)}\!\!\!
	\frac{2^{3s} \zeta^{(2)}(2w) L(2w,\chi)\Gamma(s)^2 X^w U_{\pm x}(s) U_{\pm x}(w)}{\zeta^{(2)}(4w) \log(1+\sqrt{2}) \Gamma(2s)} dsdw,
\end{align}
which conveniently decouples into independent $s$ and $w$ integrals.

To handle the
$s$-integral, note by contour shifting the Mellin pair identity
\[\frac{1}{2\pi i} \int_{(\sigma_s)} \frac{2^{3s} \Gamma(s)^2}{\Gamma(2s)} z^s\, \frac{ds}{s} = 4\,\mathrm{arctanh}\Big(\sqrt{1-\tfrac{1}{2z}}\Big),\]
which holds for $\Re s > 0$ and $\vert z \vert > \frac{1}{2}$.
It follows by Lemma~\ref{lem:MT_eval_IPB} that
\[\frac{1}{2\pi i}\int_{(\sigma_s)} \frac{2^{3s} \Gamma(s)^2 U_{\pm x}(s)}{\Gamma(2s)} ds
	= 4\log(1+\sqrt{2}) + O(1/x),\]
in which we've used that $\mathrm{arctanh}(1/\sqrt{2}) = \log(1+\sqrt{2})$.

To estimate the integral in $w$, we shift the line of integration to
$\sigma_w = \frac{1}{4}+\epsilon$, passing a simple pole at $w=\frac{1}{2}$
and extracting the residue
\[\frac{\sqrt{2}U_{\pm x}(\tfrac{1}{2})}{\pi^2} X^{\frac{1}{2}}  = \frac{2\sqrt{2}}{\pi^2} X^{\frac{1}{2}} + O\Big(\frac{X^\frac{1}{2}}{x}\Big).\]
The shifted $w$-integral is easily seen to be $O(X^{1/4+\epsilon} x^{1/2+\epsilon})$, hence
\begin{align} \label{eq:boundedmax_principal}
I_{\pm x}^0(\tfrac{1}{4},10,X) = \frac{8\sqrt{2}}{\pi^2} \log(1+\sqrt{2}) X^{\frac{1}{2}}
 + O\Big(\frac{X^{\frac{1}{2}}}{x} + X^{\frac{1}{4}+\epsilon} x^{\frac{1}{2}+\epsilon}\Big).
\end{align}

\subsection*{Discrete term}

Within the integrals~\eqref{eq:boundedmax_model_transform}, the contribution of the terms with $m\neq 0$ in~\eqref{eq:in_thm_double_ds_decomposition}
takes the form
\begin{align}
I_{\pm x}^\mathrm{spec}(\sigma_s,\sigma_w,X)
&= \frac{1}{(2\pi i)^2} \int_{(\sigma_w)} \int_{(\sigma_s)}
	\!\frac{2^{3s} (1-2^{-2w}) U_{\pm x}(w) U_{\pm x}(s) X^w }{\zeta^{(2)}(4w) \log(1+\sqrt{2})} \\
	&\quad \times \sum_{m \neq 0} \frac{\Gamma(s+it_m)\Gamma(s-it_m)}{\Gamma(2s)}
	(-1)^m L(2w,\eta^{2m})\,  dsdw,
\end{align}
in which $\sigma_w = 10$ and $\sigma_s = \frac{1}{4}$ initially.  Since $L(w,\eta^{2m})$ is entire for $m \neq 0$, we have $I_{\pm x}^\mathrm{spec}(\frac{1}{4},10,X) = I_{\pm x}^\mathrm{spec}(\frac{1}{4},\frac{1}{4}+\epsilon, X)$. Lemma~\ref{lem:gamma_ratio_lemma} implies that
\[I_{\pm x}^\mathrm{spec}(\tfrac{1}{4},\tfrac{1}{4}+\epsilon, X)
	\ll X^{\frac{1}{4}+\epsilon}\! \int_{(\sigma_w)} \int_{(\sigma_s)} \vert U_{\pm x}(s)U_{\pm x}(w) \vert (\vert s \vert + \vert s \vert^{\frac{1}{2}} \vert w \vert^{\frac{1}{2}}) dsdw.\]
Various estimates for $U_{\pm x}(s)$ and $U_{\pm x}(w)$ given by~\eqref{eq:weight_Us_bound} then imply
\[I_{\pm x}^\mathrm{spec}(\tfrac{1}{4},10,X) = O\big(X^{\frac{1}{4}+\epsilon} x^{1+2\epsilon}\big).\]

\begin{proof}[Proof of Theorem~\ref{thm:boundedmax}]
We have $ T_{\pm x}(X)
  =
  I_{\pm x}^{0}(\tfrac{1}{4}, 10, X)
  +
  I_{\pm x}^{\mathrm{spec}}(\tfrac{1}{4}, 10, X)$.
We have shown that $I_{\pm x}^{0}$ contributes the
term~\eqref{eq:boundedmax_principal} and $I_{\pm x}^{\mathrm{spec}}$
contributes the error $O(X^{\frac{1}{4}+\epsilon} x^{1+2\epsilon})$, so that
\begin{equation}
\begin{split}
  T_{\pm x}(X, Y)
  &=
  \frac{8\sqrt{2}}{\pi^2} X^{\frac{1}{2}}
	+ O\Big(\frac{X^{\frac{1}{2}}}{x} + X^{\frac{1}{4}+\epsilon} x^{1+2\epsilon}\Big).
\end{split}
\end{equation}
The theorem now follows by choosing $x=X^{1/8}$ and applying the inequalities $T_{-x}(X) \leq T(X)
\leq T_{+x}(X)$.
\end{proof}

\subsection{APs with individually bounded first terms}%
\label{ssec:naturalXY}

We count the number of primitive APs of squares $\{h, m, 2m - h\}$ such that $m \leq X$
and $h \leq Y$, with $Y \leq X$. In particular, we produce asymptotics for sums
of the form
\begin{equation}
  S(X, Y)
  :=
  \sum_{m \leq X} \sideset{}{'}\sum_{h \leq Y} r_1(h)r_1(m)r_1(2m-h),
\end{equation}
in which the prime indicates the restriction to $(m,h)=1$.

\begin{remark}
The sum $S(X,Y)/8$ double-counts those APs in which both $h\leq Y$ and $2m-h \leq Y$.
For example, both $\{1, 25, 49\}$ and $\{49, 25, 1\}$ would be counted if $X = Y
= 50$. We can compensate for this double-counting by removing those ``reversed'' APs
whose maximum is the first element, i.e. those APs with maximum at most $Y$.
The number of such APs is given by the quantity $T(Y)$ of Theorem~\ref{thm:boundedmax},
so that $(S(X,Y)-T(Y))/8$ counts primitive APs with first term at most $Y$ and center at most $X$.
\end{remark}

Our primary theorem of this section is the following.

\begin{theorem}\label{thm:naturalXY}
Suppose that $Y \leq X$. Then, for any $\epsilon > 0$, the number of primitive APs of
squares $\{h,m,2m-h\}$ with $h \leq Y$ and $m \leq X$ is
\[\tfrac{1}{8}S(X,Y) - \tfrac{1}{8} T(Y)
= \frac{1}{\sqrt{2} \pi^2} Y^{\frac{1}{2}} \log \big(X/Y\big)
+  c\, Y^{\frac{1}{2}} + O_\epsilon\big(X^\epsilon Y^{\frac{3}{8}+\epsilon}\big),\]
in which $c= \sqrt{2}(1+\frac{3}{2}\log 2 - \log(1+\sqrt{2}))/\pi^2$.
\end{theorem}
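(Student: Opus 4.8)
The plan is to follow the same template established in the proof of Theorem~\ref{thm:ratl_points}, adapted to the constraint region $h \leq Y$, $m \leq X$. First I would introduce smoothed approximations
\[
S_{\pm x}(X,Y) := \sideset{}{'}\sum_{m,h\geq 1} r_1(h)r_1(m)r_1(2m-h)\, u_{\pm x}\big(\tfrac{h}{Y}\big) u_{\pm x}\big(\tfrac{m}{X}\big),
\]
using the weight functions of Section~\ref{sec:weight_functions}, so that $S_{-x}(X,Y)\leq S(X,Y)\leq S_{+x}(X,Y)$. These smoothed sums are recognized as a double Mellin--Barnes transform of $\mathcal{D}(s,w)$: since here $h$ and $m$ are bounded independently, the natural substitution gives $\mathcal{D}(s,w)$ directly (rather than the shifted $\mathcal{D}(s-w,w)$ of Theorem~\ref{thm:ratl_points}), integrated against $U_{\pm x}(s) U_{\pm x}(w) X^s Y^w$. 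I would take the initial contours at $\sigma_s = 10$, $\sigma_w = \tfrac{1}{4}$.

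Next I would split the integral transform along the decomposition of $\mathcal{D}(s,w)$ from Theorem~\ref{thm:double_ds_decomposition} into the $m=0$ term $I^0_{\pm x}$ and the discrete spectral remainder $I^{\mathrm{spec}}_{\pm x}$. The spectral term is bounded exactly as in Section~\ref{ssec:discrete_term_ratl_points}: because $L(s,\eta^{2m})$ is entire for $m\neq 0$, I can shift the $s$-contour to $\tfrac{1}{4}+\epsilon$ without crossing poles, then apply Lemma~\ref{lem:gamma_ratio_lemma} to control the $m$-sum and obtain a bound of the shape $O_\epsilon(X^\epsilon Y^{1/4+\epsilon} x^{1+\epsilon})$. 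The arithmetically interesting work is in $I^0_{\pm x}$, which carries the Dedekind zeta function of $\mathbb{Q}(\sqrt 2)$ and hence the main terms.

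The main obstacle, and the feature distinguishing this theorem from the previous ones, is that the $m=0$ integrand now contains \emph{two} poles that collide as the contours are shifted, producing the logarithmic main term $Y^{1/2}\log(X/Y)$ rather than a pure power. Concretely, the $s$- and $w$-integrals no longer fully decouple, and shifting both contours to the left one encounters a pole from $\zeta^{(2)}(2s)$ (or its $w$-analogue) at the edge $s=\tfrac{1}{2}$ together with a pole from the gamma factor; extracting the combined residue requires care, since the product $X^s Y^w$ with $Y\leq X$ means the relevant double residue evaluates to $Y^{1/2}$ times a logarithm in $X/Y$. I would evaluate the leading double residue first, identifying the $\tfrac{1}{\sqrt2 \pi^2} Y^{1/2}\log(X/Y)$ term, then track the next-order residue and the constant contributions (including the arctanh/logarithm constants that appeared in Theorem~\ref{thm:boundedmax}) to pin down the constant $c = \sqrt 2\big(1+\tfrac{3}{2}\log 2 - \log(1+\sqrt2)\big)/\pi^2$. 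The subtraction of $\tfrac{1}{8}T(Y)$ in the statement, which corrects the double-counting of reversed APs noted in the remark, contributes its own $Y^{1/2}$ term via Theorem~\ref{thm:boundedmax} and must be combined carefully into the final constant.

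Finally, with $I^0_{\pm x}$ supplying the main terms plus an error $O(Y^{1/2}/x + X^\epsilon Y^{1/4+\epsilon} x^{1/2+\epsilon})$ and $I^{\mathrm{spec}}_{\pm x}$ supplying $O_\epsilon(X^\epsilon Y^{1/4+\epsilon} x^{1+\epsilon})$, I would balance the error terms by choosing $x = Y^{1/8}$, so that the dominant error becomes $O_\epsilon(X^\epsilon Y^{3/8+\epsilon})$. The sandwich inequalities $S_{-x}\leq S\leq S_{+x}$ then transfer the asymptotic from the smoothed sums to $S(X,Y)$, and subtracting $\tfrac{1}{8}T(Y)$ completes the proof.
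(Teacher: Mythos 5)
Your overall strategy coincides with the paper's: the same smoothed sums and the transform of $\mathcal{D}(s,w)$ against $U_{\pm x}(s)U_{\pm x}(w)X^sY^w$, the same split into the $m=0$ and spectral pieces via Theorem~\ref{thm:double_ds_decomposition}, the subtraction of $\tfrac18 T(Y)$ feeding $\log(1+\sqrt2)$ into the constant $c$, and the choice $x=Y^{1/8}$. However, the mechanism you describe for extracting the main term would fail as stated. The pole that produces the logarithm lies to the \emph{right} of your initial contours, not the left. Concretely, the paper first changes variables $s \mapsto s-w$ so that the polar line $s+w=\tfrac12$ of $\zeta^{(2)}(2s+2w)$ becomes $s=\tfrac12$; shifting $s$ leftward past this simple pole leaves a residue which is a single $w$-integral against
\[
\Gamma(\tfrac12-w)^2\, U_{\pm x}(\tfrac12-w)\,U_{\pm x}(w)\,\Gamma(1-2w)^{-1}\,(Y/8X)^w .
\]
Because $Y\leq X$, this residue integral must then be shifted to the \emph{right} (the paper moves to $\sigma_w=1-\epsilon$), crossing a pole of order two at $w=\tfrac12$ (the collision of $\Gamma(\tfrac12-w)^2/\Gamma(1-2w)$ with $U_{\pm x}(\tfrac12-w)$); differentiating $(Y/8X)^w$ in that order-two residue is precisely what produces $\tfrac{4\sqrt2}{\pi^2}Y^{1/2}\log(X/Y)$ plus the constant contribution. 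Starting from $\sigma_w=\tfrac14$, a leftward shift never meets $w=\tfrac12$ at all, so your ``shift both contours to the left'' plan misses the main term entirely; worse, a leftward-shifted $w$-contour carries $(X/Y)^{\lvert\sigma_w\rvert}$, which grows, so no asymptotic expansion results from that direction. You assert the correct final answer, but the contour geometry you propose cannot deliver it.

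A second, smaller slip concerns the spectral piece: you propose shifting the $s$-contour only to $\tfrac14+\epsilon$ yet claim the bound $O_\epsilon(X^\epsilon Y^{1/4+\epsilon}x^{1+\epsilon})$. At $\sigma_s=\tfrac14+\epsilon$ the factor $X^s$ contributes $X^{1/4+\epsilon}$, and after choosing $x=Y^{1/8}$ the error would be $X^{1/4+\epsilon}Y^{3/8+\epsilon}$, which is not $O_\epsilon(X^\epsilon Y^{3/8+\epsilon})$ once $X$ is much larger than $Y$. Since $L(s,\eta^{2m})$ is entire for $m\neq 0$ and $\Gamma(s\pm it_m)$ has no poles in $\Re s>0$, one can and must shift all the way to $\sigma_s=\epsilon$ before invoking Lemma~\ref{lem:gamma_ratio_lemma}, exactly as the paper does; that is what produces the $X^\epsilon$ in the theorem's error term.
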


To prove this theorem, we define $S_{+x}(X,Y)$ and $S_{-x}(X,Y)$ as
\begin{align}
  S_{\pm x}(X, Y)
  &:=
  \sideset{}{'}\sum_{m, h \geq 1} r_1(m)r_1(h)r_1(2m - h)
  u_{\pm x}\big(\tfrac{m}{X}\big)
  u_{\pm x}\big(\tfrac{h}{Y}\big) \\
\label{eq:naturalXY_model_transform}
 & =
  \frac{1}{(2 \pi i)^2} \int_{(\sigma_w)}\int_{(\sigma_s)}
  \mathcal{D}(s, w) U_{\pm x}(s)U_{\pm x}(w) X^s Y^w ds \, dw
\end{align}
for sufficiently large $\sigma_w$ and $\sigma_s$. We take $\sigma_w =
\frac{1}{4}$ and $\sigma_s = 10$ initially.

Our analysis of $S_{\pm x}(X,Y)$ follows the decomposition of $\mathcal{D}(s,w)$
given in Theorem~\ref{thm:double_ds_decomposition}. As in Theorems~\ref{thm:ratl_points}
and~\ref{thm:boundedmax}, the main term comes from the $m=0$ term
in~\eqref{eq:in_thm_double_ds_decomposition} and the $m \neq 0$ term contributes
error terms used for optimizing the parameter $x$.

\subsection*{Principal term}

Simplifying the $m=0$ term of~\eqref{eq:in_thm_double_ds_decomposition} as in Section~\ref{ssec:first_term_ratlpoints}, we see that its contribution to $S_{\pm x}(X,Y)$
takes the form
\begin{equation}
  \begin{split}
  I^{0}_{\pm x}(\sigma_s, \sigma_w, X, Y)
  &=
  \frac{1}{(2 \pi i)^2}
  \int_{(\sigma_w)}
  \int_{(\sigma_s)}
  \Big(
  U_{\pm x}(s) U_{\pm x}(w) X^s Y^w
  \\
  &\qquad\times
\frac{2^{3s}\zeta^{(2)}(2s+2w)L(2s+2w,\chi)}{\zeta^{(2)}(4s+4w) \log(1+\sqrt{2})\Gamma(2s)}
    \Gamma(s)^2
 \Big) ds \, dw,
  \end{split}
\end{equation}
where initially $\sigma_w = \frac{1}{4}$ and $\sigma_s = 10$. To extract the
main term, we will
\begin{enumerate}
\item change variables to disentangle $s$ and $w$ in the leading poles,
\item shift the line of $s$ integration to the left, passing double poles, and
\item shift the line of $w$ integration \emph{within the residues extracted from} (2) to
      the right, passing triple poles.
\end{enumerate}
The shifted integrals in the first term do not contribute leading terms in the final asymptotic,
so we only consider the residues.

%

Changing variables $s \mapsto s - w$ shows that
\begin{equation}
  \begin{split}
  I^{0}_{\pm x}(10, \tfrac{1}{4}, X, Y)
  &=
  \frac{1}{(2 \pi i)^2}
  \int_{(\frac{1}{4})}
  \int_{(10 + \frac{1}{4})}
  \Big(
  U_{\pm x}(s - w) U_{\pm x}(w) X^{s-w} Y^w
  \\
  &\qquad\times
  \frac{%
    2^{3s-3w} \Gamma(s-w)^2 \zeta^{(2)}(2s) L(2s, \chi)
  }{%
    \zeta^{(2)}(4s)\log(1 + \sqrt 2) \Gamma(2s -2w)
  } \Big) ds \, dw.
  \end{split}
\end{equation}
This integrand matches the integrand within $I^0_{\pm x}$ from
Section~\ref{ssec:ratl_points}, except that this has $U_{\pm x}(s-w)X^{s-w}$
instead of $U_{\pm x}(s)X^s$. Shifting the line of
$s$-integration to $\sigma_s = \frac{1}{4} + \epsilon$ passes a simple pole with residue
%
%
\begin{align}
\frac{4X^{\frac{1}{2}}}{2\pi i} \int_{(\frac{1}{4})} \frac{\Gamma(\frac{1}{2}-w)^2 U_{\pm x}(\frac{1}{2}-w)U_{\pm x}(w)}{\pi^2 \Gamma(1-2w)}\Big(\frac{Y}{8X}\Big)^w \,dw.
\end{align}


As $Y < X$, this residue is
minimized by shifting $w$ to the right.  We shift $w$ to $\sigma_w = 1-\epsilon$,
extracting a single residue from a pole of order two at $w=\frac{1}{2}$.
The negation of this residue takes the form
\begin{equation}\label{eq:naturalXY_first_residue}
  \frac{4\sqrt{2}}{\pi^2} \log(X/Y) Y^{\frac{1}{2}}
 + \frac{8\sqrt{2} + 12 \sqrt{2}\log 2}{\pi^2}\, Y^\frac{1}{2}
 + O\Big(\frac{\log(X/Y)Y^{\frac{1}{2}} + Y^\frac{1}{2}}{x}\Big).
\end{equation}
The main term above creates the main term in our final asymptotic.

\subsection*{Discrete term}

Within the integrals~\eqref{eq:naturalXY_model_transform}, the term coming from the
$m\neq 0$ part of the decomposition~\eqref{eq:in_thm_double_ds_decomposition}
takes the form
\begin{equation}
  \begin{split}
  I^{\mathrm{spec}}_{\pm x}&(\sigma_s, \sigma_w, X, Y)
  \\
  &=
  \frac{1}{(2 \pi i)^2}
  \int_{(\sigma_w)}
  \int_{(\sigma_s)}
  \sum_{m \neq 0}
  \Big(
  U_{\pm x}(s) U_{\pm x}(w) X^s Y^w
  2^{3s} (1-2^{-2s-2w})
  \\
  &\qquad\times
  \frac{\Gamma(s  - it_m) \Gamma(s  + it_m)}
       {\Gamma(2s)\log(1+\sqrt{2})}
 \frac{(-1)^m L(2s+2w,\eta^{2m})}{\zeta^{(2)}(4s+4w)}  \Big) ds \, dw,
  \end{split}
\end{equation}
where initially $\sigma_w = \frac{1}{4}$ and $\sigma_s = 10$.  Since $L(s,\eta^{2m})$ is
entire for $m\neq 0$, we have $I_{\pm x}^\mathrm{spec}(10,\frac{1}{4},X,Y) = I_{\pm x}^\mathrm{spec}(\epsilon,\frac{1}{4},X,Y)$.
Lemma~\ref{lem:gamma_ratio_lemma} then implies that
\[I_{\pm x}^\mathrm{spec}(\epsilon,\tfrac{1}{4},X,Y) \ll X^\epsilon Y^{\frac{1}{4}} \!\int_{(\frac{1}{4})} \int_{(\epsilon)} \vert U_{\pm x}(s)U_{\pm x}(w) \vert (\vert s \vert + \vert s \vert^\frac{1}{2} \vert s+w \vert^\frac{1}{2}) \,dsdw,\]
hence $I_{\pm x}^\mathrm{spec}(\epsilon,\frac{1}{4}, X,Y) \ll X^\epsilon Y^\frac{1}{4} x^{1+\epsilon}$ using familiar bounds for $U_{\pm x}$.

\begin{proof}[Proof of Theorem~\ref{thm:naturalXY}]
We have the equality
\begin{equation}
  S_{\pm x}(X, Y)
  =
  I_{\pm x}^{0}(10, \tfrac{1}{4}, X, Y)
  +
  I_{\pm x}^{\mathrm{spec}}(10, \tfrac{1}{4}, X, Y)
\end{equation}
We have shown that $I_{\pm x}^{0}$ contributes the
term~\eqref{eq:naturalXY_first_residue} and $I_{\pm x}^{\mathrm{spec}}$
contributes the error $O(Y^\frac{1}{4} X^\epsilon x^{1+\epsilon})$, so that
\begin{equation}
\begin{split}
  S_{\pm x}(X, Y)
  &=
  \frac{4\sqrt{2}}{\pi^2} \log(X/Y) Y^{\frac{1}{2}}
 + \frac{8\sqrt{2}+12\sqrt{2} \log 2}{\pi^2}\, Y^\frac{1}{2} \\
  &\qquad + O\Big(\frac{\log(X/Y)Y^{\frac{1}{2}} + Y^\frac{1}{2}}{x}\Big)
  +
  O(Y^\frac{1}{4} X^\epsilon x^{1+\epsilon})
\end{split}
\end{equation}
The theorem follows by choosing $x=Y^{1/8}$, noting $S_{-x}(X, Y) \leq S(X, Y)
\leq S_{+x}(X, Y)$, and subtracting the estimate for $T(Y)$ given in Theorem~\ref{thm:boundedmax}.
\end{proof}

\subsection{APs with bounded products of first two terms}%
\label{ssec:firsttwoX}

Finally, we use $\mathcal{D}(s, w)$ to count the number of primitive APs of squares $\{h, m, 2m -
h\}$ such that $hm \leq X$ by producing asymptotics for the sum
\begin{equation}
  S(X) := \sideset{}{'}\sum_{\substack{hm \leq X^2 \\ h \leq m}} r_1(h) r_1(m) r_1(2m - h),
\end{equation}
in which the prime denotes the restriction $(m,h)=1$.

Our primary result of this section is the following theorem.

\begin{theorem}\label{thm:firsttwoX}
For any $\epsilon > 0$, the number of primitive APs of squares $\{h,m,2m-h\}$ with $hm \leq X$ is
  \begin{equation}
    \tfrac{1}{8} S(X)
    =
    \frac{2\sqrt{2}}{\pi^2} \, _2F_1(\tfrac{1}{4},\tfrac{1}{2},\tfrac{5}{4},\tfrac{1}{2}) X^{\frac{1}{2}}
   + O_\epsilon\big(X^{\frac{3}{8} + \epsilon}\big).
  \end{equation}
\end{theorem}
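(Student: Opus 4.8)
The plan is to follow the template of Sections~\ref{sec:applications} and~\ref{ssec:boundedmax}, with an integral transform adapted to the product constraint $hm \le X^2$ (equivalently $ab \le X$) and the ordering $h \le m$. I first introduce the smoothed counts
\[
  S_{\pm x}(X) := \sideset{}{'}\sum_{m,h \ge 1}
  r_1(h)r_1(m)r_1(2m-h)\,
  u_{\pm x}\big(\tfrac{hm}{X^2}\big)\,
  u_{\pm x}\big(\tfrac{h}{m}\big),
\]
which satisfy $S_{-x}(X) \le S(X) \le S_{+x}(X)$. Applying Mellin inversion to both weights, the product weight forces a common exponent on $h$ and $m$ while the ratio weight separates them, giving
\[
  S_{\pm x}(X)
  =
  \frac{1}{(2\pi i)^2}
  \int_{(\sigma_w)}\int_{(\sigma_s)}
  \mathcal{D}(s-w,\, s+w)\,
  U_{\pm x}(s) U_{\pm x}(w)\, X^{2s}\, ds\, dw
\]
for $\sigma_s$ large and $\sigma_w$ modest. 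Inserting the decomposition of Theorem~\ref{thm:double_ds_decomposition} and using $2(s-w)+2(s+w)=4s$ and $4(s-w)+4(s+w)=8s$, the analysis splits into the $m=0$ term $I^0_{\pm x}$, in which the Hecke $L$-function degenerates to $\zeta(4s)L(4s,\chi)$, and the remaining discrete sum $I^{\mathrm{spec}}_{\pm x}$.

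For the principal term, the only pole governing the main asymptotic is the simple pole of $\zeta(4s)$ at $s=\tfrac14$, which yields $X^{1/2}$ through $X^{2s}$; here it is convenient to first move the $w$-line slightly off $\tfrac14$ so that this pole does not collide with the pole of $\Gamma(s-w)^2$ along $s=w$. The residue at $s=\tfrac14$ leaves a $w$-integral whose integrand contains $\Gamma(\tfrac14-w)^2/\Gamma(\tfrac12-2w)$; the Legendre duplication formula rewrites this ratio as a constant multiple of $2^{2w}\,\Gamma(\tfrac14-w)/\Gamma(\tfrac34-w)$, and after collecting the powers of $2$ and replacing $U_{\pm x}(w)$ by $1/w$ via Lemma~\ref{lem:MT_eval_IPB}, the surviving integral is the Mellin--Barnes integral
\[
  \frac{1}{2\pi i}\int_{(\sigma_w)}
  \frac{\Gamma(\tfrac14-w)}{\Gamma(\tfrac34-w)}\,
  \frac{(1/2)^w}{w}\, dw,
  \qquad 0 < \sigma_w < \tfrac14.
\]
Closing the contour to the right and summing the residues at $w=\tfrac14+n$ collapses the gamma ratio into Pochhammer symbols and reconstructs the hypergeometric series ${}_2F_1(\tfrac14,\tfrac12;\tfrac54;\tfrac12)$. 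I expect this evaluation to be the principal obstacle: one must carry the constants faithfully through the duplication formula and the residue sum to recover the exact coefficient $\tfrac{2\sqrt2}{\pi^2}$, while the approximations $U_{\pm x}(\tfrac14)=4+O(1/x)$ and Lemma~\ref{lem:MT_eval_IPB} supply the secondary term $O(X^{1/2}/x)$.

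For the discrete term, $L(2s+2w,\eta^{2m})$ is entire for $m\ne 0$, so I may shift contours freely: lowering the $w$-line below $\tfrac18$ and the $s$-line to $\Re s=\tfrac18+\epsilon$ places the $L$-function argument $4s$ on its critical line while keeping $\Re(s-w)$ in $(0,\tfrac12)$, which is exactly the range needed to invoke Lemma~\ref{lem:gamma_ratio_lemma}. Bounding the sum over $m$ by that lemma and the weight estimates~\eqref{eq:weight_Us_bound} gives $I^{\mathrm{spec}}_{\pm x}\ll X^{1/4+\epsilon}x^{1+\epsilon}$, the same shape met in Sections~\ref{ssec:discrete_term_ratl_points} and~\ref{ssec:boundedmax}; the shifted principal integral contributes the smaller $O(X^{1/4+\epsilon}x^{1/2+\epsilon})$. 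Balancing these against $X^{1/2}/x$ with the choice $x=X^{1/8}$ produces the error $O_\epsilon(X^{3/8+\epsilon})$, and the sandwiching inequalities $S_{-x}\le S\le S_{+x}$ together with the factor $\tfrac18$ finish the proof.
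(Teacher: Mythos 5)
Your proposal is correct and follows essentially the same route as the paper: the same smoothed sums and integral transform $\mathcal{D}(s-w,s+w)X^{2s}$ with $\sigma_s=10$, $\sigma_w=\tfrac18$, the same split into the $m=0$ and $m\neq 0$ parts of Theorem~\ref{thm:double_ds_decomposition}, the residue at $s=\tfrac14$, the bound $I^{\mathrm{spec}}_{\pm x}\ll X^{1/4+\epsilon}x^{1+\epsilon}$ via Lemma~\ref{lem:gamma_ratio_lemma}, and the choice $x=X^{1/8}$. Your evaluation of the residual $w$-integral (Legendre duplication to reduce $\Gamma(\tfrac14-w)^2/\Gamma(\tfrac12-2w)$ to $\Gamma(\tfrac14-w)/\Gamma(\tfrac34-w)$, then summing residues at $w=\tfrac14+n$) is just an explicit derivation of the Mellin pair identity the paper invokes, and it reproduces the coefficient $\tfrac{16\sqrt2}{\pi^2}$, hence $\tfrac{2\sqrt2}{\pi^2}$ after the factor $\tfrac18$.
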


To prove this theorem, we define $S_{+x}(X)$ and $S_{-x}(X)$ as
\begin{align}
  S_{\pm x}(X)
  &:=
  \sideset{}{'}\sum_{m, h \geq 1} r_1(m)r_1(h)r_1(2m - h)
  u_{\pm x} \big(\tfrac{h}{m}\big)
  u_{\pm x}\big(\tfrac{hm}{X^2}\big) \label{eq:firsttwoX_model_transform} \\
  &=
  \frac{1}{(2 \pi i)^2} \int_{(\sigma_s)} \int_{(\sigma_w)}
  \mathcal{D}(s-w, s+w) U_{\pm x}(s) U_{\pm x}(w) X^{2s}\, dw \, ds
\end{align}
for sufficiently large $\sigma_s$. We take $\sigma_s = 10$ and $\sigma_w = \frac{1}{8}$ initially.

The proof of Theorem~\ref{thm:firsttwoX} again resembles the proof of
Theorem~\ref{thm:ratl_points}. As in our previous applications, we break our
proof into two parts, following the decomposition into $m=0$ and $m\neq 0$ from
Theorem~\ref{thm:double_ds_decomposition}.

\subsection*{Principal term}

The main term in the integrals~\eqref{eq:firsttwoX_model_transform} comes from the $m=0$
term in the decomposition~\eqref{eq:in_thm_double_ds_decomposition} and takes the form
\begin{align*}
  I^{0}_{\pm x}(\sigma_s,\sigma_w, X)
  =
  \frac{1}{(2\pi i)^2} \int_{(\sigma_s)} \int_{(\sigma_w)} &\frac{2^{3s-3w} \zeta^{(2)}(4s) L(4s,\chi) X^{2s}}{\zeta^{(2)}(8s) \log(1+\sqrt{2}) \Gamma(2s-2w)} \\
 &\qquad \times \Gamma(s-w)^2 U_{\pm x}(s) U_{\pm x}(w) \, dw ds,
\end{align*}
where initially $\sigma_s = 10$.  Shifting the line of $s$-integration to $\sigma_s = \frac{1}{8}+\epsilon$ passes a simple pole at $s=\frac{1}{4}$ and extracts a residue of the form
\begin{align} \label{eq:firsttwoX_residue}
\frac{X^{\frac{1}{2}}}{2\pi i}\int_{(\frac{1}{8})} & \frac{\Gamma(\frac{1}{4}-w)^2 U_{\pm x}(\frac{1}{4})U_{\pm x}(w)}{2^{3w-\frac{1}{4}} \pi^2 \Gamma(\frac{1}{2}-2w)} dw,
\end{align}
To determine the growth of this integral, we apply the integral identity
\begin{equation} \label{eq:hypergeometric-identity}
	\frac{1}{2\pi i} \int_{(\frac{1}{8})}
	\frac{\Gamma(\frac{1}{4}-w)^2}{w \Gamma(\frac{1}{2}-2w)} z^w \, dw
	= 8z^{1/4}\, _2F_1(\tfrac{1}{4}, \tfrac{1}{2}, \tfrac{5}{4}, 4z),
\end{equation}
valid for $\vert z \vert <\frac{1}{4}$, in the case $z= \frac{1}{8}$.
To establish~\eqref{eq:hypergeometric-identity}, we note by Stirling's approximation
that the integrand is $O(\vert 4z \vert^{\Re w} \vert w \vert^{-3/2})$. The assumption
$\vert z \vert < \frac{1}{4}$ implies that the contour $\Re w = \frac{1}{8}$ may be
shifted far to the right, where it vanishes in the limit. This shift extracts residues
from the integral at points $w= \frac{1}{4} + \ell$, for integers $\ell \geq 0$, and
the sum of these residues equals
\begin{align}
	& \sum_{\ell = 0}^\infty \Res_{w=\frac{1}{4}+\ell}
			\frac{\Gamma(\frac{1}{4}-w)^2}{w \Gamma(\frac{1}{2}-2w)} z^w
	= \sum_{\ell = 0}^\infty \Res_{w=\frac{1}{4}+\ell}
			\frac{(4z)^w \sqrt{2\pi} \Gamma(\frac{1}{4}-w)}{w \Gamma(\frac{3}{4}-w)} \\
	& \quad
	= \sqrt{2\pi} (4z)^{\frac{1}{4}} \sum_{\ell = 0}^\infty
			\frac{(-4z)^{\ell} \Gamma(\ell + \frac{1}{4})}
					{\Gamma(\ell + \frac{5}{4}) \Gamma(\frac{1}{2}-\ell) \ell !}
	= \frac{2z^{\frac{1}{4}}}{\sqrt{\pi}}
			\sum_{\ell = 0}^\infty \frac{(4z)^{\ell}
				\Gamma(\ell + \frac{1}{4})\Gamma(\ell+\frac{1}{2})}{\Gamma(\ell + \frac{5}{4}) \ell !} \\
	& \quad
	= \frac{2 z^{\frac{1}{4}}}{\sqrt{\pi}} \frac{\Gamma(\frac{1}{4})\Gamma(\frac{1}{2})}{\Gamma(\frac{5}{4})}
		{}_2F_1(\tfrac{5}{4}, \tfrac{1}{2}, \tfrac{1}{4}, 4z)
	= 8 z^{\frac{1}{4}} {}_2F_1(\tfrac{5}{4}, \tfrac{1}{2}, \tfrac{1}{4}, 4z),
\end{align}
in which we've used the gamma duplication formula and the reflection formula $\Gamma(\frac{1}{2}-z)\Gamma(\frac{1}{2}+z) = \pi \sec(\pi z)$ to simplify.

Via Lemma~\ref{lem:MT_eval_IPB} and~\eqref{eq:hypergeometric-identity}, the residue
integral~\eqref{eq:firsttwoX_residue} is
\begin{align} \label{eq:firsttwoX_first_MT}
\frac{16\sqrt{2}}{\pi^2} \, _2F_1(\tfrac{1}{4},\tfrac{1}{2},\tfrac{5}{4},\tfrac{1}{2}) X^\frac{1}{2} + O\Big(\frac{X^{\frac{1}{2}}}{x}\Big).
\end{align}
The shifted double integral $I_{\pm x}^0(\frac{1}{8}+\epsilon, \frac{1}{8}, X)$ is $O(X^{\frac{1}{4}+2\epsilon} x^{\epsilon})$, which will be non-dominant.

\subsection*{Discrete term}

Within the integrals~\eqref{eq:firsttwoX_model_transform}, the term coming from
the $m \neq 0$ term in the
decomposition~\eqref{eq:in_thm_double_ds_decomposition} takes the form
\begin{align*}
I_{\pm x}^{\text{spec}}(\sigma_s,\sigma_w,X) = &\frac{1}{(2\pi i)^2}\! \int_{(\sigma_s)} \!\int_{(\sigma_w)}\! \sum_{m \neq 0} \frac{(-1)^m 2^{3s-3w}(1-2^{-4s})  L(4s,\eta^{2m})}{\zeta^{(2)}(8s) \log(1+\sqrt{2})\Gamma(2s-2w)}\\
    & \times
    \Gamma(s-w +it_m)\Gamma(s-w-it_m) U_{\pm x}(s) U_{\pm x}(w)  X^{2s}\, dw ds,
\end{align*}
with $\sigma_s = 10$ and $\sigma_w = \frac{1}{8}$.  Since $L(s,\eta^{2m})$ is entire for $m\neq 0$,
we have $I_{\pm x}^\mathrm{spec}(10,\frac{1}{8},X) = I_{\pm x}^\mathrm{spec}(\frac{1}{8}+\epsilon,\frac{1}{8},X)$.  Our analysis now follows along the lines of~Section~\ref{ssec:discrete_term_ratl_points} to prove that $I_{\pm x}^\mathrm{spec}(\frac{1}{8}+\epsilon,\frac{1}{8},X) = O(X^{\frac{1}{4}+2\epsilon} x^{1+2\epsilon})$.

\begin{proof}[Proof of Theorem~\ref{thm:firsttwoX}]
We have the equality.
\begin{equation}
  S_{\pm x}(X)
  =
  I_{\pm x}^{0}(10, \tfrac{1}{8}, X)
  +
  I_{\pm x}^{\mathrm{spec}}(10,\tfrac{1}{8}, X).
\end{equation}
We have shown that $I_{\pm x}^{0}$ contributes~\eqref{eq:firsttwoX_first_MT},
while $I_{\pm x}^{\mathrm{spec}}$ is $O(X^{\frac{1}{4}+2\epsilon} x^{1+2\epsilon})$.
Together, these show that
\begin{equation}
\begin{split}
  S_{\pm x}(X)
  &=
  \frac{16\sqrt{2}}{\pi^2} \, _2F_1(\tfrac{1}{4},\tfrac{1}{2},\tfrac{5}{4},\tfrac{1}{2})
 X^{\frac{1}{2}} + O\Big(\frac{X^{\frac{1}{2}}}{x}\Big)
  + O_\epsilon(X^{\frac{1}{4} + 2\epsilon}x^{1+2\epsilon}).
\end{split}
\end{equation}
The theorem follows from the choice $x=X^{1/8}$ and the inequalities $S_x^{-}(X) \leq S(X)
\leq S_x^{+}(X)$.
\end{proof}

\vspace{20 mm}
\bibliographystyle{alpha}
\bibliography{compiled_bibliography}

\end{document}